\newcolumntype{Y}{>{\centering\arraybackslash}X}
\NewDocumentCommand{\son}{}{\mathrm{SO}(n)}
\theoremstyle{nonumberplain}
\newtheorem{claim}{Claim}
\newtheorem{claimproof}{Proof of Claim}
\newcommand{\lie}{\mathrm{Lie}\,}
\newcommand{\tr}{\intercal}
\title{Combinatorics-Based Approaches to Controllability Characterization for Bilinear Systems}
\author{Gong Cheng\thanks{Department of Electrical and Systems
    Engineering, Washington University, St.\ Louis, MO 63130
    (\email{gong.cheng@wustl.edu}, \email{wei.zhang@wustl.edu}, \email{jsli@wustl.edu}). Questions, comments, or corrections regarding this document may be directed to Li.}
  \and Wei Zhang\footnotemark[1]
  \and Jr-Shin Li\footnotemark[1]}
\begin{document}
\maketitle


\tikzstyle{vertex}=[circle, draw, fill=black!50, inner sep=0pt,
minimum width=4pt]

\newdimen\R
\R=8mm


\begin{abstract}
  The control of bilinear systems has attracted considerable attention in the field of systems and control for decades, owing to their prevalence in diverse applications across science and engineering disciplines. Although much work has been conducted on analyzing controllability properties, the mostly used tool remains the Lie algebra rank condition. In this paper, we develop alternative approaches based on theory and techniques in combinatorics to study controllability of bilinear systems. The core idea of our methodology is to represent vector fields of a bilinear system by permutations or graphs, so that Lie brackets are represented by permutation multiplications or graph operations, respectively. Following these representations, we derive combinatorial characterization of controllability for bilinear systems, which consequently provides novel applications of symmetric group and graph theory to control theory. Moreover, the developed combinatorial approaches are compatible with Lie algebra decompositions, including the Cartan and non-intertwining decomposition. This compatibility enables the exploitation of representation theory for analyzing controllability, which allows us to characterize controllability properties of bilinear systems governed by semisimple and reductive Lie algebras.
\end{abstract}

\begin{keywords}
  Bilinear systems, Lie groups, graph theory, symmetric groups,
  representation theory, Cartan decomposition
\end{keywords}


\section{Introduction}

Bilinear systems, a class of nonlinear systems, emerge naturally as mathematical models to describe the dynamics of numerous processes in science and engineering. Prominent examples include the Bloch system governing the dynamics of spin-$\tfrac{1}{2}$ nuclei immersed in a magnetic field in quantum physics \cite{Glaser98,Li_PRA06,Li_PNAS11}, the compartmental model describing the movement of cells and molecules in biology \cite{Mohler78,Eisen79,Mohler80}, and the integrate-and-fire model characterizing the membrane potential of a neuron under synaptic inputs and injected current in neuroscience \cite{Dayan05,Gerstner02}. The prevalence of bilinear systems has been actively promoting the research in control theory and engineering concerning the analysis and manipulation of such systems for decades. The initial investigation into control problems involving bilinear systems traces back to the year of 1935, when the Greek mathematician Constantin Carath\'{e}odory studied optimal control of bilinear systems presented in terms of Pfaffian forms by using calculus of variations and partial differential equations \cite{Caratheodory35}. However, research in systematic analysis of fundamental properties of bilinear control systems was not prosperous until the early 1970s, when leading control theorists, such as Brockett, Jurdjevic, and Sussmann, developed geometric control theory for introducing techniques in Lie theory and differential geometry to classical control theory \cite{Brockett14,Brockett72,Jurdjevic72,Hirschorn75,Brockett76,Hermann77}. 

One of the most remarkable results in geometric control theory is the Lie algebra rank condition (LARC), which establishes an equivalence between controllability of control-affine systems defined on smooth manifolds and Lie algebras generated by the vector fields governing the system dynamics \cite{Brockett72,Isidori95,Jurdjevic96}. In our recent work, based on the LARC, we developed a necessary and sufficient controllability condition for bilinear systems by using techniques in symmetric group theory \cite{Zhang19}. In particular, we introduced a monoid structure on symmetric groups so that Lie bracket operations are compatible with monoid operations. This then resulted in a characterization of controllability in terms of elements in ``symmetric monoids'' for bilinear systems, which also offered an alternative to the LARC and further shed light on interpreting geometric control theory from an algebraic perspective.

In this paper, we propose a combinatorics-based framework to analyze controllability of bilinear systems defined on Lie groups by adopting techniques in symmetric group theory and graph theory. Specifically, the main idea is to  associate such systems with permutations or graphs, so that Lie bracket operations of the vector fields governing the system dynamics can be represented by permutation multiplications and edge operations on the graphs. This combinatorics approach immediately leads to the characterizations of controllability in terms of permutation cycles and graph connectivity. In particular, we identify the classes of bilinear systems, for which controllability has equivalent symmetric group and graph representations. A prominent example is the system defined on $\son$, the special orthogonal group, for which we reveal a correspondence between permutation cycles in the symmetric group and trees in the graph associated with these systems. It is worth noting that, different from our previous work on the symmetric group method \cite{Zhang19}, the correspondence between Lie bracket operations and permutation multiplications established in this paper do not require any monoid structure on symmetric groups. On the other hand, the application of graph theory in the developed combinatorics-based framework offers a distinct viewpoint to the field of control theory. Specifically, in the existing literature, graphs are naturally used in the context of networked and multi-agent systems, e.g., for describing the coupling topology and deriving structural controllability conditions \cite{Mou2016,Qin2016,Tsopelakos2019}, while, in this work, we establish a non-trivial relationship between graph connectivity and controllability for a single bilinear system.

Moreover, a great advantage of the developed framework is its compatibility with various Lie algebra decomposition techniques in representation theory. In particular, we illustrate the application of these methods to systems of which the underlying Lie algebras are semisimple or reductive, while in these cases, the correspondence between Lie bracket operations and permutation multiplications as well as graph operations is elusive due to their complicated algebraic structures. In this work, we exploit the Cartan and non-intertwining decompositions to decompose the system Lie algebras into simple components, so that the combinatorics-based controllability analysis is equivalently carried over to these components. 

This paper is organized as follows. In Section~\ref{sec:prelim}, we provide the preliminaries relevant to our developments, including the LARC for systems on Lie groups and a brief review of the Lie algebra $\mathfrak{so}(n)$. In Section~\ref{sec:com.framework}, we establish the symmetric group and graph-theoretic methods based upon the study of bilinear systems on $\son$. In Section~\ref{sec:non-standard}, we introduce the notions and tools of Cartan and non-intertwining decompositions for decomposing the system Lie algebras into simpler components, which enables and facilitates the generalization of the combinatorics-based framework to broader classes of bilinear systems. A brief review of the basics of symmetric groups and Lie algebra decompositions can be found in the appendices.


\section{Preliminaries} \label{sec:prelim}

To prepare for our development of the combinatorial controllability conditions, in this section, we briefly review the Lie algebra $\mathfrak{so}(n)$ and the LARC for right-invariant bilinear systems. Meanwhile, we introduce the notations we use throughout this paper.

\subsection{The Lie Algebra Rank Condition}
The LARC has been the most recognizable tool, if not unique, for analyzing controllability of bilinear systems since the 1970s. It establishes a connection between controllability and the Lie algebra generated by the vector fields governing the system dynamics. In this paper, we primarily focus on the bilinear system evolving on a compact and connected Lie group of the form,
\begin{equation}
  \label{eq:bilinear.system}
  \dot{X}(t)=B_0X(t)+\Bigl(\sum_{i=1}^mu_i(t)B_i\Bigr)X(t),\quad{}X(0)=I,
\end{equation}
where $X(t)\in G$ is the state on a compact and connected Lie group $G$, $I$ is the identity element of $G$, $B_i$ are elements in the Lie algebra $\mathfrak{g}$ of $G$, and $u_i(t)\in\mathbb{R}$ are piecewise constant control inputs. For any subset $\Gamma\subseteq\mathfrak{g}$, we use $\lie(\Gamma)$ to denote the Lie subalgebra generated by $\Gamma$, i.e., the smallest vector subspace of $\mathfrak{g}$ containing $\Gamma$ that is closed under the Lie bracket defined by $[C,D]:=CD-DC$ for $C,D\in\mathfrak{g}$. With these notations, the LARC for the system in \cref{eq:bilinear.system} can be stated as follows.

\begin{theorem}[LARC]
  \label{thm:LARC}
  The system in \cref{eq:bilinear.system} is \emph{controllable} on   $G$ if and only if $\lie(\Gamma)=\mathfrak{g}$, where $\Gamma=\{B_0, B_1, \ldots, B_m\}$.
\end{theorem}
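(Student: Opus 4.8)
The plan is to treat \cref{eq:bilinear.system} as a right-invariant control system on $G$, whose defining vector fields are the right-invariant fields $X \mapsto B_i X$, and to analyze the reachable set $\mathcal{R} \subseteq G$ from the identity $I$. The two implications require genuinely different tools: the necessity of the rank condition is essentially a statement about integral manifolds, whereas its sufficiency is where compactness of $G$ does the real work.

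For necessity I would argue by contraposition. By Sussmann's orbit theorem, every trajectory stays within the orbit of $I$, and for a right-invariant family this orbit is precisely the connected Lie subgroup $H \subseteq G$ whose Lie algebra is $\lie(\Gamma)$, i.e.\ the subgroup generated by $\exp \lie(\Gamma)$. Hence $\mathcal{R} \subseteq H$. If $\lie(\Gamma) \neq \mathfrak{g}$, then $H$ is a proper submanifold of positive codimension, so $\mathcal{R}$ cannot exhaust $G$ and the system fails to be controllable. Equivalently, controllability forces $\lie(\Gamma) = \mathfrak{g}$.

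For sufficiency, assume $\lie(\Gamma) = \mathfrak{g}$ and proceed in two stages. First, accessibility: by the Chow--Rashevskii/Krener theorem, the rank condition guarantees that $\mathcal{R}$ has nonempty interior in $G$. Second, and this is the crux, I would upgrade accessibility to controllability using the group and compactness structure. Because concatenation of admissible trajectories corresponds to multiplication in $G$ (right-invariance), $\mathcal{R}$ is a sub-semigroup of $G$ containing an open set. The key fact is that a sub-semigroup of a compact connected Lie group with nonempty interior must be the whole group: compactness yields a recurrence (Poisson stability) argument showing that the drift $B_0$ can be approximately inverted by flowing forward long enough, so that $\overline{\mathcal{R}}$ is stable under inverses and is therefore an open-and-closed subgroup. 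Since $G$ is connected, this subgroup is all of $G$, which gives controllability.

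The main obstacle is precisely this second stage. The rank condition alone yields only accessibility, and in the presence of the drift term $B_0 X$ there is no a priori way to reverse time; indeed, on a non-compact group this step genuinely fails. The compactness of $G$ is thus indispensable, and making the recurrence argument rigorous---showing that a sub-semigroup with nonempty interior of a compact connected Lie group is the entire group---is the technical heart of the argument. A careful treatment would invoke the semigroup theory of Jurdjevic and Sussmann rather than reprove it from scratch.
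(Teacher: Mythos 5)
The paper offers no proof of its own here---it simply cites Brockett (1972)---so there is nothing internal to compare against. Your sketch is the standard Brockett/Jurdjevic--Sussmann argument for right-invariant systems on compact connected Lie groups (orbit theorem for necessity; accessibility plus the fact that a subsemigroup with nonempty interior of a compact connected group is the whole group for sufficiency), it is correct in outline, and it rightly identifies compactness as the ingredient that overcomes the non-reversibility of the drift.
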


\begin{proof}
  See \cite{Brockett72}.
\end{proof}

\subsection{Basics of the Lie Algebra \texorpdfstring{{$\mathfrak{so}(n)$}}{so(n)}}

The Lie algebra $\mathfrak{so}(n)$ is a vector space of dimension $n(n-1)/2$, which consists of all $n$-by-$n$ real skew-symmetric matrices. In particular, if we use $\Omega_{ij}$ to denote the skew-symmetric matrix with $1$ in the $(i,j)$-th entry, ${-1}$ in the $(j,i)$-th entry, and $0$ elsewhere, then the set $\mathcal{B} =\{\Omega_{ij}\in\mathbb{R}^{n\times{}n}: 1\leqslant{}i<j\leqslant{}n\}$ forms a basis of $\mathfrak{so}(n)$, which we refer to as the \emph{standard basis} of
$\mathfrak{so}(n)$. The following lemma then reveals the Lie bracket relations among elements in $\mathcal{B}$.

\begin{lemma}
  \label{lem:son}
  The Lie bracket of $\Omega_{ij}$ and $\Omega_{kl}$ satisfies the
  relation
  $[\Omega_{ij},\Omega_{kl}]=\delta_{jk}\Omega_{il}+\delta_{il}\Omega_{jk}+\delta_{jl}\Omega_{ki}+\delta_{ik}\Omega_{lj}$,
  where $\delta$ is the Kronecker delta function defined by
  \[
    \delta_{mn}=
    \begin{cases}
      1, & \text{if }m=n; \\
      0, & \text{otherwise}.
    \end{cases}
  \]
\end{lemma}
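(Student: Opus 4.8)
The plan is to prove the identity by a direct entrywise computation, exploiting the fact that each $\Omega_{ij}$ has a clean description in terms of Kronecker deltas. Writing $(\Omega_{ij})_{pq}$ for the $(p,q)$-entry, the definition of $\Omega_{ij}$ (namely $1$ in position $(i,j)$ and $-1$ in position $(j,i)$) translates into the compact formula $(\Omega_{ij})_{pq} = \delta_{pi}\delta_{qj} - \delta_{pj}\delta_{qi}$. This single formula encodes both the placement of the nonzero entries and the skew-symmetry $\Omega_{ij} = -\Omega_{ji}$, and it is the engine for the whole argument.

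Next I would compute the two matrix products appearing in the commutator $[\Omega_{ij},\Omega_{kl}] = \Omega_{ij}\Omega_{kl} - \Omega_{kl}\Omega_{ij}$ by contracting over the intermediate index. Concretely, $(\Omega_{ij}\Omega_{kl})_{pq} = \sum_r (\Omega_{ij})_{pr}(\Omega_{kl})_{rq}$, and substituting the delta formula and collapsing the sum over $r$ (each collapse producing a Kronecker delta on a pair of the indices $i,j,k,l$) yields four terms; the product $\Omega_{kl}\Omega_{ij}$ gives four more by the symmetry $(i,j)\leftrightarrow(k,l)$. Subtracting produces eight terms in total, each of the form $\pm\,\delta_{\bullet\bullet}\,\delta_{p\bullet}\delta_{q\bullet}$.

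The decisive step is to group these eight terms by the four ``linking'' deltas $\delta_{jk}$, $\delta_{il}$, $\delta_{jl}$, $\delta_{ik}$. I expect the terms to pair up so that, for instance, the two terms carrying $\delta_{jk}$ combine into $\delta_{jk}(\delta_{pi}\delta_{ql} - \delta_{pl}\delta_{qi}) = \delta_{jk}(\Omega_{il})_{pq}$, and analogously $\delta_{il}$ assembles $\Omega_{jk}$, $\delta_{jl}$ assembles $\Omega_{ki}$, and $\delta_{ik}$ assembles $\Omega_{lj}$. Reading off the four $\Omega$-matrices then gives exactly the claimed identity, since the equality holds entrywise for every $(p,q)$.

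The main obstacle is purely bookkeeping: keeping track of the signs and of which index lands in the $p$-slot versus the $q$-slot so that each pair collapses into the correctly ordered $\Omega_{ab}$ (recall $\Omega_{ab} = -\Omega_{ba}$, so an index transposition flips a sign). To reduce the risk of error one could first note that both sides are bilinear and antisymmetric under $i\leftrightarrow j$, under $k\leftrightarrow l$, and under the simultaneous swap $(i,j)\leftrightarrow(k,l)$, which lets one verify only a few representative index configurations rather than all cases; but the entrywise contraction above already handles every case uniformly, so I would present that computation directly.
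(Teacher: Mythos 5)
Your proposal is correct and matches the paper's approach, which simply states that the lemma ``follows directly from computations''; your entrywise contraction via $(\Omega_{ij})_{pq}=\delta_{pi}\delta_{qj}-\delta_{pj}\delta_{qi}$ is precisely that computation carried out in full, and the eight terms do group into the four claimed summands with the correct signs. No gaps.
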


\begin{proof}
  The proof follows directly from computations.
\end{proof}

The relations in \cref{lem:son} can also be equivalently expressed as
$[\Omega_{ij},\Omega_{kl}]\neq{}0$ if and only if $i=k$, $i=l$, $j=k$,
or $j=l$. This algebraic structure facilitates controllability
characterization of the bilinear system governed by the vector fields
represented in the standard basis $\mathcal{B}$, which is the main
focus of the next section.

\section{Combinatorics-Based Controllability Analysis for Bilinear Systems} 
\label{sec:com.framework}

In this section, we introduce a combinatorics-based framework to characterize controllability of bilinear systems. Within this framework, we adopt tools in two subfields of combinatorics - the symmetric group theory and graph theory, and build connections of Lie brackets of vector fields to permutation multiplications in symmetric groups and operations on graph edges, respectively. Such connections enable us to characterize controllability in terms of permutation cycles and graph connectivity. Here, we will investigate bilinear systems defined on $\son$, given by 
\begin{equation}
  \label{eq:system_SOn}
  \dot{X}(t)=\Omega_{i_0j_0}X+\Bigl(\sum_{k=1}^mu_i(t)\Omega_{i_k
    j_k}\Bigr)X, \quad \Omega_{i_kj_k}\in\mathcal{B}, \quad X(0)=I.
\end{equation}
as building blocks to establish this framework.
Furthermore, we will show that owing to the special algebraic structure of $\mathfrak{so}(n)$ presented in \cref{lem:son}, the symmetric group and the graph-theoretic approach, when applied to \eqref{eq:system_SOn}, give an equivalent characterization of controllability through an interconnection between symmetric groups and graphs.

\subsection{The Symmetric Group Method for Controllability Analysis} 
\label{sec:Sn}

In this section, we introduce the symmetric group method for analyzing controllability of the system in \eqref{eq:system_SOn}. In this approach, a subset of vector fields in $\mathcal{B}$ is represented using a permutation in $S_n$, the symmetric group of $n$ letters. Through this representation, we connect the Lie brackets of vector fields to permutation multiplications, so that controllability is determined by the length of permutation cycles. For a brief summary of symmetric groups and permutations, see \cref{appd:Sn}.

\subsubsection{Mapping Lie Brackets to Permutations}

To establish a relation from Lie brackets to permutation multiplications, we first define a \emph{relation} between subsets of $\mathcal{B}$ and permutations in $S_n$ by
\begin{equation}
  \label{eq:iota}
  \iota:\mathcal{P}(\mathcal{B})\rightarrow S_n,\quad
  \iota(\{\Omega_{i_0j_0},\Omega_{i_1j_1},\dots,\Omega_{i_mj_m}\})=
  (i_0j_0)(i_1j_1)\cdots(i_mj_m).
\end{equation}
Because every permutation can be decomposed into a product of transpositions ($2$-cycles), the relation $\iota$ is surjective so that every subset of $\mathcal{B}$ admits a permutation
representation.

To see how Lie bracket operations are related to permutation multiplications by $\iota$, we illustrate the idea using two elements $\Omega_{ij},\Omega_{kl}\in\mathcal{B}$. On the Lie algebra level, if
$[\Omega_{ij},\Omega_{kl}]\neq0$, then \cref{lem:son} implies that $\{i,j\}$ and $\{k,l\}$ have a common index. Without loss of generality, we may assume $j=k$ and $i\neq l$, so that $[\Omega_{ij},\Omega_{jl}]=\Omega_{il}$. Meanwhile, on the symmetric group level, we have $\iota(\{\Omega_{ij},\Omega_{jl}\})=(ij)(jl)=(ijl)$, so the permutation multiplication increases the cycle length by $1$, from the $2$-cycle factors $(ij)$ and $(jk)$ to a $3$-cycle $(ijk)$. However, if $[\Omega_{ij},\Omega_{kl}]=0$, then $\{i,j\}\cap\{k,l\}=\emptyset$, and $\iota(\{\Omega_{ij},\Omega_{kl}\})=(ij)(kl)$ is a product of two disjoint cycles. The phenomenon that elements in $\mathcal{B}$ with non-vanishing Lie brackets relating to a cycle with increased length extends inductively to larger subsets of $\mathcal{B}$. To be more specific, if $\Gamma\subset\mathcal{B}$ contains $m$ elements such that the iterated Lie brackets of them are non-vanishing, then $\iota(\Gamma)$ is an $(m+1)$-cycle. This observation immediately motivates the use of cycle length to examine controllability of systems on $\son$ in \cref{eq:system_SOn}. Before we state and prove our main theorem, let us first illustrate the symmetric group method by two examples.

\begin{example}
  \label{ex:so(5)}
  Consider a system evolving on $\mathrm{SO}(5)$, given by
  \begin{equation}
    \label{eq:ex_so(5)}
    \dot{X}(t)=\Bigl(\sum_{i=1}^4u_i(t)\Omega_{i,i+1}\Bigr)X(t),\quad X(0)=I,
  \end{equation}
  and let $\Gamma=\{\Omega_{i,i+1}: i=1,\ldots,4\}$ denote the set of
  control vector fields. The correspondence between Lie brackets in
  $\Gamma$ and permutation multiplications in $S_5$ follows
  \begin{equation}
    \label{eq:so5_S5}
    \begin{aligned}
      [\Omega_{12},\Omega_{23}] &=\Omega_{13} & &\leftrightarrow &
      (12)(23) &=(123), \\
      [\Omega_{23},\Omega_{34}] &=\Omega_{24} & &\leftrightarrow &
      (23)(34) &=(234), \\
      [\Omega_{34},\Omega_{45}] &=\Omega_{35} & &\leftrightarrow &
      (34)(45) &=(345), \\
      [\Omega_{12},[\Omega_{23},\Omega_{34}]] &=\Omega_{14} &
      &\leftrightarrow &
      (12)(234) &=(1234), \\
      [\Omega_{23},[\Omega_{34},\Omega_{45}]] &=\Omega_{25} &
      &\leftrightarrow &
      (23)(345) &=(2345), \\
      [\Omega_{12},[\Omega_{23},[\Omega_{34},\Omega_{45}]]]
      &=\Omega_{15} & &\leftrightarrow & (12)(2345) &=(12345).
    \end{aligned}
  \end{equation}
  Note that successively Lie bracketing elements in $\Gamma$ results
  in $\Omega_{13}$, $\Omega_{14}$, $\Omega_{15}$, $\Omega_{24}$,
  $\Omega_{25}$, and $\Omega_{35}$, together with the $4$ elements in
  $\Gamma$, we have $10$ linearly independent vector fields. Because
  $\mathfrak{so}(5)$ is a $10$-dimensional Lie algebra, we conclude
  $\lie(\Gamma)=\mathfrak{so}(5)$, which implies that the system in
  \cref{eq:ex_so(5)} is controllable on ${\rm SO}(5)$ by the LARC. On
  the other hand, \cref{eq:so5_S5} also shows $\iota(\Gamma)=(12345)$,
  a cycle of maximum length in $S_5$.  This suggests that
  controllability of systems on $\son$ can be characterized by cycles
  of \emph{maximum} length in the corresponding symmetric group.
\end{example}

\begin{example} 
  \label{ex:so(5)_2}
  Consider another system evolving on ${\rm SO}(5)$ driven by three
  controls, given by
  \begin{equation}
    \label{eq:ex_so(5)_2}
    \dot{X}(t)=\bigl(u_1(t)\Omega_{12}+u_2(t)\Omega_{23}+u_3(t)\Omega_{45}\bigr)X(t),
    \quad X(0)=I.
  \end{equation}
  In this case, the single Lie brackets,
  \[
    \begin{aligned}
      [\Omega_{12},\Omega_{23}] &=\Omega_{13} & &\leftrightarrow & &(12)(23)=(123),\\
      [\Omega_{12},\Omega_{45}] &=0 & &\leftrightarrow & &(12)(45),\\
      [\Omega_{23},\Omega_{45}] &=0 & &\leftrightarrow & &(23)(45),
    \end{aligned}
  \]
  and the double Lie brackets,
  \[
    \begin{aligned}
      [\Omega_{13},\Omega_{12}]
      &=[[\Omega_{12},\Omega_{23}],\Omega_{12}]=\Omega_{23} & &\leftrightarrow
      & (12)(23)(12) &=(13), \\
      [\Omega_{23},\Omega_{13}]
      &=[\Omega_{23},[\Omega_{12},\Omega_{23}]]=\Omega_{12} & &\leftrightarrow 
      & (23)(12)(23) &=(13), \\
      [\Omega_{13},\Omega_{45}]
      &=[[\Omega_{12},\Omega_{23}],\Omega_{45}]=0 & &\leftrightarrow
      & (12)(23)(45) &=(123)(45),
    \end{aligned}
  \]
  result in a Lie subalgebra of dimension $4$. Therefore, this system
  is \emph{not} controllable on ${\rm SO}(5)$. On the other hand, for
  $\Gamma=\{\Omega_{12}, \Omega_{23}, \Omega_{45}\}$, the computations
  above also show $\iota(\Gamma)=(123)(45)$, which is \emph{not} a
  single cycle of maximum length in $S_5$.
\end{example}

\Cref{ex:so(5),ex:so(5)_2} together verify the observation that cycles
with the maximum length characterize controllability of bilinear
systems on $\son$, which we will prove in the next section.

\begin{remark}
 \label{rmk:iota.not.a.map}
 Note that the relation $\iota$ introduced in \cref{eq:iota} is
 \emph{not} a well-defined function, because, for a given
 $\Gamma\subseteq B$, $\iota(\Gamma)$ depends on the ordering of the
 elements in $\Gamma$. If, say, $\Gamma=\{\Omega_{12}, \Omega_{14},
 \Omega_{23}, \Omega_{24}, \Omega_{34}\}$, then different element
 orderings,
 \[
   \begin{aligned}
     &\{\Omega_{12}, \Omega_{14}, \Omega_{23}, \Omega_{24},
     \Omega_{34}\} & &\leftrightarrow & (12)(14)(23)(24)(34) &=(14) \\
     &\{\Omega_{14}, \Omega_{12}, \Omega_{24}, \Omega_{23},
     \Omega_{34}\} & &\leftrightarrow & (14)(12)(24)(23)(34) &=(1234)
   \end{aligned}
 \]
 could result in \emph{different} permutations. 
 Nevertheless, we can verify that for any
 $\Gamma\subseteq\mathcal{B}$, there always exists a subset
 $\Sigma\subseteq\Gamma$ such that $\iota$ relates $\Sigma$ to
 permutations with the same (maximal) orbits, albeit different
 orderings of the elements in $\Sigma$. For example, for the subset
 $\Sigma=\{\Omega_{12}, \Omega_{23}, \Omega_{34}\}$ of $\Gamma$,
 $\iota(\Sigma)$ is always a $4$-cycle with its orbit being
 $\{1,2,3,4\}$, regardless of its element orderings. The existence of
 such a subset will be clear once we develop a graph visualization of
 the permutations in Section~\ref{sec:graph}.
\end{remark}

\subsubsection{Controllability Characterization in Terms of Permutation Cycles}
Leveraging the technique of mapping Lie brackets to permutations developed in the previous section, we are able to characterize controllability of systems on $\son$ in terms of permutation cycles as shown in the following theorem.

\begin{theorem}
  \label{thm:SOn_Sn}
  The control system defined on $\son$ of the form
  \begin{equation}
    \label{eq:son}
    \dot{X}(t)=\Bigl(\Omega_{i_0j_0}+\sum_{k=1}^mu_k(t)\Omega_{i_kj_k}\Bigr)X(t), \quad X(0)=I,
  \end{equation}
  (same system as (3.1)) where $\Gamma:=\{\Omega_{i_k j_k}\}\subseteq\mathcal{B}$ for $k=0,\dots,m$, is controllable if and only if there is a subset $\Sigma\subseteq\Gamma$ such that $\iota(\Sigma)$ is an $n$-cycle, where $\iota$ is the relation defined in \cref{eq:iota}.
\end{theorem}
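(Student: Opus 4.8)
The plan is to first apply \cref{thm:LARC}, which reduces the claim to showing that $\lie(\Gamma)=\mathfrak{so}(n)$ if and only if there is a subset $\Sigma\subseteq\Gamma$ with $\iota(\Sigma)$ an $n$-cycle. The bridge between the two sides is the \emph{index graph} $G(\Gamma)$ on the vertex set $\{1,\dots,n\}$ whose edges are the unordered pairs $\{i_k,j_k\}$ for $\Omega_{i_kj_k}\in\Gamma$. I would prove two equivalences and chain them: (i) $\lie(\Gamma)=\mathfrak{so}(n)$ iff $G(\Gamma)$ is connected and spans all $n$ vertices; and (ii) such a $\Sigma$ exists iff $G(\Gamma)$ is connected and spanning.

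For equivalence (i), suppose $G(\Gamma)$ is connected and spanning. For any $p\neq q$ choose a simple path $p=v_0,v_1,\dots,v_s=q$; since each $\Omega_{v_{t-1}v_t}\in\Gamma$ and the $v_t$ are distinct, the iterated brackets $[\dots[[\Omega_{v_0v_1},\Omega_{v_1v_2}],\Omega_{v_2v_3}],\dots]$ collapse the shared indices one at a time via $[\Omega_{ab},\Omega_{bc}]=\Omega_{ac}$ from \cref{lem:son}, producing $\Omega_{pq}$ up to sign. Hence every basis element lies in $\lie(\Gamma)$ and $\lie(\Gamma)=\mathfrak{so}(n)$. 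Conversely, if $G(\Gamma)$ is disconnected with vertex classes $V_1,\dots,V_c$ where $c\geq2$, then by \cref{lem:son} the span $\mathfrak{h}$ of all $\Omega_{pq}$ with $p,q$ in a common class is closed under the bracket and contains $\Gamma$, so $\lie(\Gamma)\subseteq\mathfrak{h}$; since $\dim\mathfrak{h}=\sum_t\binom{|V_t|}{2}<\binom{n}{2}$, the system is not controllable.

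For equivalence (ii), if $\iota(\Sigma)$ is an $n$-cycle then it has no fixed point and is transitive on $\{1,\dots,n\}$; since any product of the transpositions $(i_kj_k)$ preserves each connected component of the subgraph $G(\Sigma)$ setwise, transitivity forces $G(\Sigma)$ to be a single spanning component, so $G(\Gamma)\supseteq G(\Sigma)$ is connected and spanning. Conversely, if $G(\Gamma)$ is connected and spanning, extract a spanning tree and let $\Sigma\subseteq\Gamma$ be its edge set, giving $n-1$ transpositions. Here I would invoke the classical fact (D\'enes) that the product of the edge transpositions of a tree on $n$ vertices is an $n$-cycle for \emph{every} ordering of the factors; thus $\iota(\Sigma)$ is an $n$-cycle regardless of the ordering ambiguity noted in \cref{rmk:iota.not.a.map}. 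Chaining (i) and (ii) yields the theorem.

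The main obstacle is the tree-to-cycle direction of (ii): establishing that a spanning tree's transpositions always multiply to a single $n$-cycle, independently of their order. I would prove this by induction on $n$, peeling off a leaf $v$ with incident edge $(v\,w)$ --- the only factor containing $v$ --- and showing that inserting $(v\,w)$ into the $(n-1)$-cycle obtained from the remaining tree splices $v$ into the cycle without breaking transitivity; the order-independence follows because $v$ occurs in exactly one factor. A secondary care point is that $\iota(\Gamma)$ itself need not be an $n$-cycle even when $G(\Gamma)$ is connected, since extra edges create graph cycles that shorten the permutation, which is precisely why the statement must quantify over a subset $\Sigma$ rather than use $\Gamma$ directly.
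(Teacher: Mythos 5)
Your proposal is correct, but it takes a genuinely different route from the paper's proof. The paper argues directly on the Lie-algebra/symmetric-group side: sufficiency is proved by induction on $n$, peeling off one transposition $(i_{n-1}j_{n-1})$ so that the remaining product is an $(n-1)$-cycle, invoking the induction hypothesis to get controllability on $\mathrm{SO}(n-1)$, and then bracketing with $\Omega_{i_{n-1}j_{n-1}}$ to reach the $n$-th index; necessity is argued by extracting a minimal non-redundant generating subset $\Sigma$ and asserting that $\iota(\Sigma)$ must then be a single $n$-cycle. You instead funnel both directions through the index graph, proving (i) $\lie(\Gamma)=\mathfrak{so}(n)$ iff the graph is connected and spanning, and (ii) existence of an $n$-cycle subset iff the same graph condition, with the D\'enes tree-transposition fact closing the loop. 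In effect you are proving, as lemmas, the content of the paper's later \cref{thm:connectivity.controllability} and the Claim inside \cref{thm:spanning.tree}, and then deriving \cref{thm:SOn_Sn} as a corollary --- which anticipates the equivalence the paper only establishes in Section~3.3. What your route buys is rigor on two points where the paper is terse: the necessity direction (the paper's assertion that a non-redundant generating set maps to an $n$-cycle is stated without real justification, whereas your component-preservation argument plus the spanning-tree extraction makes it airtight) and the ordering ambiguity of $\iota$ flagged in \cref{rmk:iota.not.a.map} (the D\'enes fact gives order-independence for free). What the paper's route buys is self-containment within the symmetric-group framework, without importing graph machinery before Section~3.2. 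One small note: your leaf-peeling induction should be phrased via conjugation --- moving $(vw)$ past the trailing factors $\pi$ turns it into $(v\,\pi^{-1}(w))$, still a transposition joining $v$ to the $(n-1)$-cycle's orbit --- which is exactly the commutation law \cref{eq:commutation.law} the paper uses in its Claim; "because $v$ occurs in exactly one factor" alone does not quite finish that step, since other factors may move $w$.
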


\begin{proof} 
  By the LARC, the system in \cref{eq:son} is controllable on $\son$ if and only if $\lie(\Gamma)=\mathfrak{so}(n)$. Therefore, it is equivalent to showing that $\lie(\Sigma)=\mathfrak{so}(n)$ if and only if $\iota(\Sigma)$ is an $n$-cycle for some $\Sigma\subseteq\Gamma$.
	
  (Sufficiency): Suppose there exists a subset $\Sigma\subseteq\Gamma$
  such that $\iota(\Sigma)$ is an $n$-cycle. Because an $n$-cycle can
  be decomposed into a product of at least $n-1$ transpositions, this
  implies $m\geqslant n-1$. Hence, it suffices to assume that the
  cardinality of $\Sigma$ is $n-1$, and, without loss of generality,
  let $\Sigma=\{\Omega_{i_1j_1},\dots,\Omega_{i_{n-1}j_{n-1}}\}$,
  Because $\iota(\Sigma)$ is an $n$-cycle, it follows that the index
  set $\{i_1,j_1,\dots,i_{n-1},j_{n-1}\}=\{1,\ldots,n\}$.  Note that
  the set $\{i_1,j_1,\dots,i_{n-1},j_{n-1}\}$ contains repeated
  elements. Next, we prove the sufficiency by induction.

  When $n=3$, suppose there exists a subset
  $\Sigma=\{\Omega_{ij},\Omega_{kl}\}\subset\Gamma$ and that
  $\iota(\Sigma)=(ij)(kl)$ is a 3-cycle, so we must have one of the
  following: $i=k$, $j=k$, $i=l$, or $j=l$. Consequently,
  $[\Omega_{ij},\Omega_{kl}]\in \mathcal{B}\backslash\Sigma$, so
  $\{\Omega_{ij},\Omega_{kl},[\Omega_{ij},\Omega_{kl}]\}$ spans
  $\mathfrak{so}(3)$. Therefore, the system in \cref{eq:son} is
  controllable on $\mathrm{SO}(3)$.

  Now let us assume that for $n\geqslant 4$, a system defined on
  ${\rm SO}(n-1)$ in the form of \cref{eq:son} is controllable if
  there is $\Sigma\subseteq\Gamma$ such that $\iota(\Sigma)$ is an
  $(n-1)$-cycle.  Let $\Sigma\subseteq\Gamma$ be a set of $n-1$
  elements such that
  $\iota(\Sigma)=(i_{n-1}j_{n-1})(i_{n-2}j_{n-2})\cdots(i_1j_1)$ is a
  cycle of length $n$, then for every integer
  $1 \leqslant{}k \leqslant{}n-1$, there exists some
  $1 \leqslant{}l \leqslant{}n-1$ such that
  $\{i_k,j_k\}\cap\{i_l,j_l\}\neq\emptyset$. Consequently, there are
  $n-2$ transpositions of the form $(i_kj_k)$, $k=1,\dots,n-1$, such
  that their product is a cycle of length $n-1$. Without loss of
  generality, we may assume that
  $\iota(\Sigma\backslash\{\Omega_{i_{n-1}j_{n-1}}\})
  =(i_{n-2}j_{n-2})\cdots(i_1j_1)$ is a $(n-1)$-cycle with the
  nontrivial orbit
  $\{i_1,j_1,\dots,i_{n-2},j_{n-2}\} =\{1,\dots,n-1\}$. By the
  induction hypothesis, the system in \cref{eq:son} is controllable on
  $\mathrm{SO}(n-1)\subset\son$. Equivalently, any $\Omega_{ij}\in \mathcal{B}$ such that
  $1 \leqslant{} i<j\leqslant n-1$ can be generated by iterated Lie
  brackets of the elements in
  $\Sigma\backslash\{\Omega_{i_{n-1}j_{n-1}}\}$. Because
  $\iota(\Sigma)=(i_{n-1}j_{n-1})\iota(\Sigma\backslash\{\Omega_{i_{n-1}j_{n-1}}\})$
  is a $n$-cycle, we must have $i_{n-1}\in\{1,\dots,n-1\}$ and
  $j_{n-1}=n$. Therefore, $\Omega_{kn}$ can be generated by the Lie
  brackets $[\Omega_{ki_{n-1}},\Omega_{i_{n-1}j_{n-1}}]$ for any
  $k=1,\dots,n-1$. As a result, the system in \cref{eq:son} is
  controllable on $\son$.

  (Necessity): Because the system in \cref{eq:son} is controllable,
  $\lie(\Gamma)=\mathfrak{so}(n)$. Then, there exists a subset
  $\Sigma$ of $\Gamma$ such that $\lie(\Sigma)=\mathfrak{so}(n)$ and
  $\Sigma$ contains no \emph{redundant elements}, i.e., the elements
  that can be generated by Lie brackets of the other elements in
  $\Sigma$. Without loss of generality, we assume
  $\Sigma=\{\Omega_{i_1j_1},\dots,\Omega_{i_lj_l}\}$, where
  $l\leqslant m$. By \cref{lem:son}, for any
  $\Omega_{ab},\Omega_{cd}\in\Sigma$, if
  $[\Omega_{ab},\Omega_{cd}]\neq 0$, then there must exist a bridging
  index, i.e., we must have one of the following cases: $a=c$, $a=d$,
  $b=c$, or $b=d$. This, together with
  $\lie(\Sigma)=\mathfrak{so}(n)$, implies that the index set $J$ of
  $\Sigma$ is $J=\{i_1,j_1,\dots,i_l,j_l\}=\{1,\dots,n\}$, and that
  for any $\Omega_{i_kj_k}\in\Sigma$, there exists some
  $\Omega_{i_sj_s}\in \Sigma$ with $s\neq k$ such that
  $\{i_k,j_k\}\cap\{i_s,j_s\}\neq\emptyset$. Moreover, because
  $\Sigma$ contains no redundant elements,
  $\iota(\Sigma)=\iota(\Omega_{i_lj_l})\cdots\iota(\Omega_{i_1j_1})$
  is a cycle whose orbit contains every element in $\{1,\dots,n\}$,
  namely, it is a cycle of length $n$. In addition, the cardinality of
  $\Sigma$ is $n-1$.
\end{proof}

\begin{remark}
  \label{rem:number_of_control}
  Following the above proof, it requires at least $n-1$ controls for the system on $\son$ in \cref{eq:son} to be fully controllable and, on the other hand, for $\iota(\Sigma)$, $\Sigma\subseteq\Gamma$, to reach a cycle of length $n$.
\end{remark}

Similar to the case in \cref{thm:SOn_Sn} for controllable systems, the
controllable submanifold for an uncontrollable system also depends on
the permutation related to a subset of $\Gamma$. To be more specific,
the cycle decomposition of such a permutation determines the
involutive distribution of the submanifold.

\begin{corollary}
  \label{cor:Sn_submanifold}
  Given a system evolving on $\son$ in the form of \cref{eq:system_SOn}, let $\Xi$ be a minimal subset of $\Gamma$, such that $\lie(\Xi)=\lie(\Gamma)$. If $\iota(\Xi)=\sigma_1\cdot\sigma_2\cdots\sigma_l$ so that each $\sigma_k$, $1 \leqslant{} k \leqslant{}l$, are pairwise disjoint
  cycles with the nontrivial orbits $\mathcal{O}_k$, then the controllable submanifold of the system is the Lie subgroup of $\son$ with the Lie algebra
  $\lie(\Gamma)=\bigoplus_{k=1}^l{\rm
    span}\,\{\Omega_{ij}:i,j\in\mathcal{O}_k\}$. Conversely, if
  $\lie(\Gamma)=\bigoplus_{k=1}^l{\rm
    span}\,\{\Omega_{ij}:i,j\in\mathcal{O}_k\}$ for some
  $\mathcal{O}_k\subset\{1,2,\dots,n\}$, then
  $\iota(\Xi)=\sigma_1\cdot\sigma_2\cdots\sigma_l$ and $\sigma_k$ are
  that disjoint cycles with nontrivial orbits $\mathcal{O}_k$.
\end{corollary}

\begin{proof}
  Let $\Xi$ be a minimal subset of $\Gamma$ such that
  $\lie(\Xi)=\lie(\Gamma)$ and $\Xi$ does not contain redundant
  elements. First, let $\sigma=\iota(\Xi)\in S_n$ be a cycle with
  nontrivial orbit $\mathcal{O}$, then \cref{thm:SOn_Sn} implies
  $\lie(\Xi)=\mathrm{span}\,\{\Omega_{ij}:i,j\in\mathcal{O},i<j\}$. Next,
  if $\sigma=\sigma_1\cdots\sigma_l$ is a permutation as a product of disjoint
  cycles $\sigma_1,\dots,\sigma_l$ with $l\geqslant 2$, then there exists a
  partition $\{\Xi_1,\dots,\Xi_l\}$ of $\Xi$ such that
  $\iota(\Xi_k)=\sigma_k$ for each $k=1,\dots,l$. Let $\mathcal{O}_k$
  denotes the nontrivial orbit of $\sigma_k$ for each $k=1,\dots,l$, then
  $\lie(\Xi_k)=\{\Omega_{ij}:i,j\in\mathcal{O}_k,i<j\}$ and the sets
  $\mathcal{O}_1,\dots,\mathcal{O}_l$ are pairwise disjoint subsets of
  $\{1,\dots,n\}$. Hence, $\lie(\Xi_i)\cap\lie(\Xi_j)=\{0\}$ holds for
  all $i\neq j$, and consequently, we have
  $\lie(\Xi)=\lie(\Xi_1)\oplus\cdots\oplus\lie\mathcal(\Xi_l)$, where
  $\oplus$ denotes the direct sum of vector spaces. By the Frobenius
  Theorem \cite{warner.lie.groups}, $\lie(\Xi)$ is completely
  integrable, and that the set of all its maximal integral manifolds
  forms a foliation $\mathcal{F}$ of $\son$. Since the initial
  condition of the system in \cref{eq:son} is the identity matrix $I$,
  the leaf of $\mathcal{F}$ passing through $I$ is the controllable
  submanifold of the system in \cref{eq:son}. The converse is obvious
  following a very similar argument.
\end{proof}

According to \cref{thm:SOn_Sn,cor:Sn_submanifold}, mapping the control
vector fields in $\Gamma$ to permutations provides not only an
alternative approach to effectively examine controllability of systems
defined on $\son$, but also a systematic procedure to characterize the
controllable submanifold when the system is not fully
controllable. Let us now revisit a previous example and see how
permutations help determine system controllability.

\begin{example}[Controllable Submanifold]
  \label{ex:controllable_submanifold}
  Recall \cref{ex:so(5)_2}, where the system in \cref{eq:ex_so(5)_2} is not controllable and there exist no subsets of $\Gamma=\{\Omega_{12},\Omega_{23},\Omega_{45}\}$ such that $\iota(\Gamma)$ is a $5$-cycle. In addition, the controllable submanifold is the integral manifold of the involutive distribution $\Delta=\lie\{\Omega_{12}X, \Omega_{23}X, \Omega_{13}X, \Omega_{45}X\}=\mathrm{span}\,\{\Omega_{ij}X: i,j\in\{1,2,3\}\text{ or }i,j\in\{4,5\}\}$, which can be identified by the nontrivial orbits of $\iota(\Gamma)=(1,2,3)(4,5)$. On the other hand, for each $X\in\mathrm{SO}(5)$, the complement $\Delta_X^{\perp}={\rm span}\,\{\Omega_{ij}X:i=1,2,3, j=4,5\}$ of the distribution evaluated at $X$ contains the bridging elements required for full controllability of this system.
\end{example}

\subsection{The Graph-Theoretic Method for Controllability Analysis} 
\label{sec:graph}

Graphs appear naturally in the research of networked systems, especially in modeling multi-agent systems and analyzing structural controllability \cite{Mou2016,Qin2016,Tsopelakos2019}. However,
most graph-theoretic methods were dedicated to studying networked control systems in existing literature and were not invented and applied for understanding fundamental properties of a single bilinear system. Here, we use graphs to represent the structure of Lie algebras and then characterize controllability of bilinear systems by graph connectivity. In contrast to the symmetric group method presented in Section \ref{sec:Sn}, this graph-theoretic method establishes a correspondence between Lie bracket operations of vector fields and operations on the edges of graphs. 

\subsubsection{Mapping Lie Brackets to Graphs} 
\label{sec:Lie_graph}
A graph $G$, conventionally denoted by a 2-tuple, $G=(V,E)$, consists of a vertex set $V$ and an edge set $E$. For the purpose of analyzing controllability of the system on $\son$, we are particularly interested in simple graphs, i.e., undirected graphs with no loops or multiple
edges, of $n$ vertices. Here, we denote the collection of such graphs $\mathcal{G}$. Without loss of generality, we further assume that every graph in $\mathcal{G}$ has the same vertex set
$V=\{v_1,\dots,v_n\}$. Following these notations, we define a map
\begin{equation}
  \label{eq:graph-map}
  \tau: \mathcal{P}(\mathcal{B})\rightarrow\mathcal{G}\quad{}\text{by}\quad{}
  \tau(\Gamma)=(V,E_{\Gamma}):=G_\Gamma,
\end{equation}
where $\mathcal{P}(\mathcal{B})$ denotes the power set of
$\mathcal{B}$, i.e., the set consisting of all subsets of
$\mathcal{B}$ and $E_\Gamma=\{v_iv_j:\Omega_{ij}\in\Gamma\}$.  Some
basic properties of $\tau$ are summarized in the following
proposition.

\begin{proposition}[Properties of $\tau$]
  \label{prop:property_tau}
  \mbox{} 
  \begin{enumerate}[font=\normalfont,label={(\roman*)}]%
    \item The map $\tau$ defined in \cref{eq:graph-map} is bijective.
    \item For any $\Gamma\subseteq\mathcal{B}$, $|\Gamma|=|E_\Gamma|$
      holds, where $|\cdot|$ denote the cardinality of a set.
    \item Let $K_n$ denote the complete graph of $n$ vertices, i.e.,
      the graph whose vertices are pairwise adjacent, then
      $\tau(\mathcal{B})=K_n$.
  \end{enumerate}
\end{proposition}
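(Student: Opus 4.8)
The plan is to reduce all three assertions to a single elementary observation: the standard basis elements of $\mathfrak{so}(n)$ and the potential edges of a simple graph on $V=\{v_1,\dots,v_n\}$ are indexed by the very same objects, namely unordered pairs of distinct vertices. Concretely, I would first introduce the auxiliary map $\phi:\mathcal{B}\to\binom{V}{2}$ sending $\Omega_{ij}\mapsto v_iv_j$, where $\binom{V}{2}$ denotes the set of all two-element subsets of $V$. Since each $\Omega_{ij}\in\mathcal{B}$ carries the convention $i<j$, every unordered pair $\{v_i,v_j\}$ arises from exactly one $\Omega_{ij}$, so $\phi$ is a bijection. The only point requiring care here is matching the ordered index convention $i<j$ of the basis against the inherently unordered nature of graph edges; once this is noted, $\phi$ is manifestly well-defined and invertible.

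For part (i), I would observe that a simple graph in $\mathcal{G}$ is uniquely determined by its edge set, which is an arbitrary subset of $\binom{V}{2}$, so that $\mathcal{G}$ is canonically identified with the power set $\mathcal{P}(\binom{V}{2})$ over the fixed vertex set $V$. Under this identification $\tau$ is precisely the map induced on power sets by $\phi$, i.e.\ $\tau(\Gamma)$ has edge set $\phi(\Gamma)=\{\phi(\Omega_{ij}):\Omega_{ij}\in\Gamma\}$. Because the power-set lift of a bijection between finite sets is again a bijection, $\tau$ is bijective. If more detail is wanted I would spell out the two directions separately: injectivity, since distinct subsets of $\mathcal{B}$ have distinct $\phi$-images as $\phi$ is injective; and surjectivity, since any graph's edge set is the $\phi$-image of its $\phi$-preimage.

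For part (ii), the injectivity of $\phi$ restricts to any $\Gamma\subseteq\mathcal{B}$, so $E_\Gamma=\phi(\Gamma)$ has the same cardinality as $\Gamma$; in other words, distinct basis elements of $\Gamma$ never collapse onto the same edge. For part (iii), I would simply evaluate $E_\mathcal{B}=\{v_iv_j:1\leqslant i<j\leqslant n\}$, which exhausts $\binom{V}{2}$ and is therefore exactly the edge set of the complete graph, giving $\tau(\mathcal{B})=K_n$.

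As for the main obstacle: there is essentially none of substance, since every claim is a direct consequence of the identification $\Omega_{ij}\leftrightarrow v_iv_j$. The only thing to be vigilant about is bookkeeping, namely ensuring that the $i<j$ convention makes $\phi$ a genuine bijection rather than merely a surjection, and that \emph{simple graph} is used throughout so that a graph is completely encoded by its edge set over the common vertex set $V$. With these conventions fixed, all three parts follow immediately.
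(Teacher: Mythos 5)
Your proposal is correct and follows essentially the same route as the paper, which simply notes that (i) and (ii) are immediate from the definition of $\tau$ and verifies (iii) by writing out $E_{\mathcal{B}}=\{v_iv_j:1\leqslant i<j\leqslant n\}$. Your explicit identification $\Omega_{ij}\leftrightarrow v_iv_j$ and the power-set lift of that bijection is just a more detailed spelling-out of the same observation.
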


\begin{proof}
  Note that (i) and (ii) directly follow from the definition of
  $\tau$. For (iii), the edge set of $\tau(\mathcal{B})$ satisfies
  $E_{\mathcal{B}} =\{v_iv_j:\Omega_{ij}\in\mathcal{B}\}
  =\{v_iv_j:1\leqslant{}i<j\leqslant{}n\} =\{v_iv_j:i,j=1,\dots,n\}$,
  and hence we conclude $\tau(\mathcal{B})=K_n$.
\end{proof}

The property (i) in \cref{prop:property_tau} reveals a one-to-one
correspondence between the subsets of $\mathcal{B}$ and the graphs in
$\mathcal{G}$, which enables the representation of Lie bracket
operations by graph operations as follows.

Algebraically, for any $\Omega_{ij},\Omega_{jk}\in\mathcal{B}$,
\cref{lem:son} implies
$[\Omega_{ij},\Omega_{jk}] =\Omega_{ik}\neq{}0$, so that
$\lie\{\Omega_{ij},\Omega_{jk}\}
={\mathrm{span}}\{\Omega_{ij},\Omega_{jk},\Omega_{ik}\}$. Graphically,
by the definition of $\tau$, the two edges $\tau(\Omega_{ij})=v_iv_j$
and $\tau(\Omega_{jk})=v_jv_k$ share a common vertex $v_j$, and the
edge $\tau([\Omega_{ij},\Omega_{jk}])=\tau(\Omega_{ik})=v_iv_k$
intersects with $\tau(\Omega_{ij})$ and $\tau(\Omega_{jk})$ at
endpoints $v_i$ and $v_k$, respectively. Therefore, the three edges
$\tau(\Omega_{ij})$, $\tau(\Omega_{jk})$, and
$\tau([\Omega_{ij},\Omega_{jk}])$ form a triangle, or equivalently,
$\tau(\{\Omega_{ij},\Omega_{jk},[\Omega_{ij},\Omega_{jk}]\})
=\{v_iv_j,v_jv_k,v_iv_k\}=K_3$. This observation, as summarized in the
following lemma, reveals the relationship between first-order Lie
brackets and graph operations for three standard basis elements of
$\mathfrak{so}(n)$, which lays the foundation for the graph-theoretic
controllability analysis of bilinear systems.

\begin{lemma}
  \label{lem:lie-graph}
  If $\Omega_{ij},\Omega_{kl}\in\mathcal{B}$ satisfy
  $[\Omega_{ij},\Omega_{kl}]\neq0$, then
  \begin{enumerate}[font=\normalfont,label={(\roman*)}]%
    \item the two edges $\tau(\Omega_{ij})$ and $\tau(\Omega_{kl})$ are
      \emph{incident} (i.e., they share a common vertex);
    \item the three edges $\tau(\Omega_{ij})$, $\tau(\Omega_{kj})$,
      and $\tau([\Omega_{ij},\Omega_{kl}])$ form a triangle.
  \end{enumerate}
\end{lemma}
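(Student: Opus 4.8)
The plan is to prove both parts by directly translating the algebraic relations of \cref{lem:son} into the language of the edge map $\tau$ from \cref{eq:graph-map}, exploiting the fact that $\tau$ sends the standard basis element $\Omega_{ij}$ to precisely the edge $v_iv_j$. No new machinery is needed; the whole argument is a dictionary between ``shared index'' on the algebra side and ``shared vertex'' on the graph side.

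First I would dispatch part (i). By \cref{lem:son}, the hypothesis $[\Omega_{ij},\Omega_{kl}]\neq0$ forces one of the equalities $i=k$, $i=l$, $j=k$, or $j=l$, that is, $\{i,j\}\cap\{k,l\}\neq\emptyset$. By the definition of $\tau$, the edges $\tau(\Omega_{ij})=v_iv_j$ and $\tau(\Omega_{kl})=v_kv_l$ then share the vertex indexed by the coinciding pair, so they are incident. This step is immediate.

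For part (ii) the key preliminary observation is that the two index pairs share \emph{exactly} one index. Indeed, if $\{i,j\}=\{k,l\}$ then, under the convention $i<j$ and $k<l$, we would have $\Omega_{ij}=\Omega_{kl}$ and hence $[\Omega_{ij},\Omega_{kl}]=0$, contradicting the hypothesis; so the four indices $i,j,k,l$ take exactly three distinct values. Because the four cases listed in \cref{lem:son} are interchanged by relabeling, I would carry out a single canonical case, say $j=k$ with $i,j,l$ distinct, the remaining cases being symmetric. Here \cref{lem:son} gives $[\Omega_{ij},\Omega_{jl}]=\Omega_{il}$, so applying $\tau$ yields the three edges $v_iv_j$, $v_jv_l$, and $v_iv_l$. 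These join the three distinct vertices $v_i,v_j,v_l$ pairwise, i.e.\ $\tau(\{\Omega_{ij},\Omega_{kl},[\Omega_{ij},\Omega_{kl}]\})=\{v_iv_j,v_jv_l,v_iv_l\}$ is a copy of $K_3$, which is exactly a triangle.

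The computation itself is routine, so the only genuine point requiring care — the main obstacle, such as it is — is guaranteeing that the triangle is non-degenerate. I must rule out that the two given edges coincide (handled by the ``exactly one shared index'' argument) and confirm that the bracket produces a genuinely new third edge rather than reproducing one of the two inputs. The latter holds because, in every case of \cref{lem:son}, the index pair returned by the bracket consists precisely of the two \emph{non-shared} indices, which differs from both $\{i,j\}$ and $\{k,l\}$. Hence the three edges are pairwise distinct and span exactly three vertices, completing the argument.
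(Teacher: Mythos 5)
Your argument is correct and follows essentially the same route as the paper, which justifies this lemma by the informal discussion immediately preceding it: translate the shared-index condition of \cref{lem:son} into a shared vertex under $\tau$, and observe in the canonical case $j=k$ that $[\Omega_{ij},\Omega_{jl}]=\Omega_{il}$ yields the third edge of a triangle on $v_i,v_j,v_l$. Your additional checks that exactly one index is shared and that the bracket's edge is distinct from the two inputs are a welcome (if minor) tightening of what the paper leaves implicit.
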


To graphically characterize higher-order Lie brackets among arbitrary
collections of standard basis elements of $\mathfrak{so}(n)$, we
introduce the notion of triangular closure for graphs, which
generalizes the action of ``forming triangles'' in
\cref{lem:lie-graph}.

\begin{definition}[Triangular Closure]
  \label{def:tri.closure}
  Let $G=(V,E)$ be a graph, and $\{G^m=(V,E^m):m=0,1,\dots\}$ be an
  ascending chain of graphs, i.e., $G^m\subseteq G^{m+1}$ for any
  $m=0,1,\dots$, satisfying
  \begin{enumerate}[font=\normalfont,label={(\roman*)}]%
    \item $G^0=G$, i.e., $E^0=E$.
    \item For any $m\geqslant{}0$, $v_iv_j\in{}E^{m+1}$ if and only if
      $v_iv_j\in{}E^m$ or there exists some vertex $v_k\in{}V$ such
      that $v_iv_k,v_kv_j\in{}E^m$.
  \end{enumerate} 
  Then the union of all $G^m$, denoted
  $\bar{G}=\bigcup_{m=1}^\infty G^m$, or equivalently,
  $\bar{G}=(V,\bar{E})=(V,\bigcup_{m=1}^{\infty}E^m)$, is called the
  \emph{triangular closure} of $G$. Moreover, a graph $G$ is called
  \emph{triangularly closed} if $G=\bar{G}$.
\end{definition}

Note that for a finite graph $G$, i.e., $G$ has finitely many vertices
and edges, the ascending chain of graphs
$G =G^0\subseteq{} G^1\subseteq\cdots$ in \cref{def:tri.closure}
stabilizes in finite steps, that is, there exists a nonnegative
integer $m$ such that $G^m=G^{m+1}=\cdots$, which then implies
$\bar{G}=G^m$. In particular, for a graph with $n$ vertices, since it
has at most $n(n-1)/2$ edges, its triangular closure can be obtained
in at most $n(n-1)/2$ steps.

\begin{remark}
  For readers familiar with graph theory, \cref{def:tri.closure} is mathematically equivalent to the standard definition of \emph{transitive closure}, and the equivalence will become transparent in the proof of \cref{thm:connectivity.controllability}. The triangular closure we introduce here imitates the computations of graded Lie brackets/algebras in a more natural way, so that all orders of Lie brackets can be calculated in a graph.
\end{remark}

Recall from \cref{lem:lie-graph} that given a subset $\Gamma\subseteq\mathcal{B}$ and its associated graph $G=\tau(\Gamma)$, taking first-order Lie brackets of the elements in $\Gamma$ corresponds to adding edges that connect the endpoints of incident edges in $G$. Applying this procedure to $G=G^{0}$, as defined in \cref{def:tri.closure}, exactly results in $G^1$. Inductively, successively Lie bracketing the elements in $\Gamma$ up to order $m$ will generate the graph $G^m$, as shown
below.

\begin{theorem}
  \label{thm:lie-graph}
  Given a subset $\Gamma\subseteq\mathcal{B}$, let $\Gamma^0\subseteq\Gamma^1\subseteq\cdots$ be an ascending chain of subsets of $\mathcal{B}$ such that $\Gamma^0=\Gamma$, $\Gamma^1=[\Gamma^{0},\Gamma^{0}]\bigcup\Gamma^{0}$, $\dots$, $\Gamma^{m+1}=[\Gamma^{m},\Gamma^{m}]\bigcup\Gamma^{m}$, $\dots$, where $[\Gamma^{m},\Gamma^{m}]=\{[A,B]:A,B\in\Gamma^{m}\}$. Then $G^m=\tau(\Gamma^m)$ holds for all $m=0,1,\ldots$
\end{theorem}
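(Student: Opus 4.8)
The plan is to prove the identity $G^m=\tau(\Gamma^m)$ by induction on $m$, with the base case immediate from the definitions and the inductive step resting on the one-index-sharing structure of $\mathfrak{so}(n)$ recorded in \cref{lem:son,lem:lie-graph}. Since every graph in $\mathcal{G}$ carries the fixed vertex set $V=\{v_1,\dots,v_n\}$, and since $\Gamma^m\subseteq\mathcal{B}$ forces $\tau(\Gamma^m)$ and $G^m=(V,E^m)$ to share this vertex set, it suffices throughout to compare edge sets, i.e.\ to show $E^m=E_{\Gamma^m}$.

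For the base case $m=0$ we have $\Gamma^0=\Gamma$ and $G^0=G=\tau(\Gamma)$ by hypothesis, so $E^0=E_{\Gamma^0}$ holds by the definition of $\tau$ in \cref{eq:graph-map}. For the inductive step I would assume $E^m=E_{\Gamma^m}$, equivalently $v_iv_j\in E^m\iff\Omega_{ij}\in\Gamma^m$, and establish the same equivalence one level up. Unfolding \cref{def:tri.closure}(ii), an edge $v_iv_j$ lies in $E^{m+1}$ exactly when either $v_iv_j\in E^m$, or there exists a vertex $v_k$ (necessarily distinct from $v_i,v_j$, since the graph is simple) with $v_iv_k,v_kv_j\in E^m$. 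Applying the inductive hypothesis termwise, this is equivalent to: $\Omega_{ij}\in\Gamma^m$, or there exists $k\notin\{i,j\}$ with $\Omega_{ik},\Omega_{kj}\in\Gamma^m$. It then remains to match this disjunction with membership in $\Gamma^{m+1}=[\Gamma^m,\Gamma^m]\cup\Gamma^m$.

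The crux is the translation between the ``common vertex'' clause and the bracket set $[\Gamma^m,\Gamma^m]$, and here I would invoke the structure of $\mathfrak{so}(n)$. For two distinct standard basis elements, \cref{lem:son} shows that their bracket is either $0$ (when their index sets are disjoint) or a single standard basis element up to sign (when they share exactly one index); in particular $[\Omega_{ik},\Omega_{kj}]=\pm\Omega_{ij}\neq0$ precisely when $k$ is the shared index and $i,j$ are the remaining ones. Reading $\Gamma^{m+1}$ as a subset of $\mathcal{B}$ — that is, identifying each nonzero bracket $\pm\Omega_{pq}$ with the basis element $\Omega_{pq}$, $p<q$, and discarding vanishing brackets — this yields exactly: the existence of $k\notin\{i,j\}$ with $\Omega_{ik},\Omega_{kj}\in\Gamma^m$ is equivalent to $\Omega_{ij}\in[\Gamma^m,\Gamma^m]$. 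Combining with the trivially matching clause $\Omega_{ij}\in\Gamma^m$ gives $v_iv_j\in E^{m+1}\iff\Omega_{ij}\in\Gamma^{m+1}$, completing the induction.

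The main obstacle I anticipate is bookkeeping rather than anything conceptual: one must be careful that $[\Gamma^m,\Gamma^m]$ is read as a set of standard basis elements, handling the sign ambiguity $\Omega_{pq}=-\Omega_{qp}$ and the vanishing brackets so that $\Gamma^{m+1}$ genuinely remains a subset of $\mathcal{B}$. The other point worth verifying explicitly is that the bracket of two \emph{distinct} basis elements never produces a sum of two or more basis elements — this holds because the normalization $i<j$ forces two distinct standard basis elements to share at most one index, so at most one Kronecker delta in \cref{lem:son} survives. Once this single-term property is pinned down, the ``exactly one shared index'' condition aligns cleanly with the graph's ``exactly one common vertex,'' and the equivalence in the inductive step is forced.
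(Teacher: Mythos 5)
Your proposal is correct and takes essentially the same route as the paper, which simply asserts that the identity ``follows immediately from the definitions'' — your induction on $m$ is the canonical unfolding of that claim, using \cref{lem:son} to match the ``common vertex'' clause of \cref{def:tri.closure} with membership in $[\Gamma^m,\Gamma^m]$. Your explicit handling of the sign ambiguity $\Omega_{qp}=-\Omega_{pq}$ and of vanishing brackets, needed so that $\Gamma^{m+1}$ remains a subset of $\mathcal{B}$, is a point the paper leaves implicit but does not change the argument.
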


\begin{proof}
  This follows immediately from the definitions of $G^m$ and $\Gamma^m$.
\end{proof}

Recall that for any finite $G\in\mathcal{G}$, $G^m$ stabilizes to
$\bar{G}$ in finite steps. Meanwhile, by \cref{thm:lie-graph},
$\Gamma^m$ also stabilizes to a subset
$\hat{\Gamma}\subseteq\mathcal{B}$ which must satisfy
$\bar G=\tau(\hat{\Gamma})$. Intuitively, $\hat{\Gamma}$ is supposed
to contain all the elements that can be generated by the iterated Lie
brackets of the elements in $\Gamma$, because $\bar{G}$ is the largest
graph generated by $G$. This conclusion is then rigorously verified in
the following corollary.

\begin{corollary}
  \label{cor:lie-graph}
  Let $\Gamma$ be a subset of $\mathcal{B}$ and $G=\tau(\Gamma)$ be
  the graph associated with $\Gamma$. If
  $\hat\Gamma\subseteq\mathcal{B}$ satisfies
  $\tau(\hat\Gamma)=\bar{G}$, then
  $\lie(\Gamma)=\mathrm{span}\,(\hat\Gamma)$.
\end{corollary}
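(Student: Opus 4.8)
The plan is to establish the set equality $\lie(\Gamma)=\mathrm{span}(\hat\Gamma)$ by proving the two inclusions separately. Throughout I would use the bijectivity of $\tau$ (\cref{prop:property_tau}(i)) to identify $\hat\Gamma$ as the unique preimage of $\bar G$, which, by \cref{thm:lie-graph} and the stabilization of the chain $G^0\subseteq G^1\subseteq\cdots$, coincides with the stabilized limit $\hat\Gamma=\bigcup_{m\geqslant 0}\Gamma^m$ of the subset chain $\Gamma^0\subseteq\Gamma^1\subseteq\cdots$. This double description of $\hat\Gamma$ is what lets me pass freely between the algebraic side (iterated brackets) and the graph side (the triangular closure).

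For the inclusion $\mathrm{span}(\hat\Gamma)\subseteq\lie(\Gamma)$ I would argue by induction on $m$ that every $\Gamma^m$ is contained in $\lie(\Gamma)$. The base case $\Gamma^0=\Gamma\subseteq\lie(\Gamma)$ is immediate, and if $\Gamma^m\subseteq\lie(\Gamma)$ then each bracket in $[\Gamma^m,\Gamma^m]$ lies in $\lie(\Gamma)$ because $\lie(\Gamma)$ is closed under the bracket; hence $\Gamma^{m+1}=[\Gamma^m,\Gamma^m]\cup\Gamma^m\subseteq\lie(\Gamma)$ (the possible sign when a bracket $\pm\Omega_{ef}$ is recorded as the basis element $\Omega_{ef}$ is absorbed since $\lie(\Gamma)$ is a subspace). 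Passing to the union gives $\hat\Gamma\subseteq\lie(\Gamma)$, and because $\lie(\Gamma)$ is a vector space, $\mathrm{span}(\hat\Gamma)\subseteq\lie(\Gamma)$.

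For the reverse inclusion, since $\lie(\Gamma)$ is by definition the smallest Lie subalgebra containing $\Gamma$, it suffices to show that $\mathrm{span}(\hat\Gamma)$ is itself a Lie subalgebra containing $\Gamma$. Containment $\Gamma\subseteq\mathrm{span}(\hat\Gamma)$ is clear, since $G=G^0\subseteq\bar G$ forces $\Gamma\subseteq\hat\Gamma$. The substance is closure under the bracket, and by bilinearity and skew-symmetry it is enough to check $[\Omega_{ab},\Omega_{cd}]\in\mathrm{span}(\hat\Gamma)$ for basis elements $\Omega_{ab},\Omega_{cd}\in\hat\Gamma$. If the bracket vanishes there is nothing to show; otherwise \cref{lem:son} gives $[\Omega_{ab},\Omega_{cd}]=\pm\Omega_{ef}$, and by \cref{lem:lie-graph} the incident edges $v_av_b,v_cv_d\in\bar E$ span a triangle whose third edge is $v_ev_f$. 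The key step is to observe that $\bar G$ is \emph{triangularly closed}: as the chain stabilizes we have $\bar G=G^M$ for some $M$, and applying the edge-completion operation of \cref{def:tri.closure} once more produces $G^{M+1}=G^M=\bar G$, so no new edges arise. Hence $v_ev_f\in\bar E$, giving $\Omega_{ef}\in\hat\Gamma\subseteq\mathrm{span}(\hat\Gamma)$, and closure follows.

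The main obstacle I anticipate is the careful bookkeeping of the sign-ambiguous identity $[\Omega_{ab},\Omega_{cd}]=\pm\Omega_{ef}$ against the unsigned edge $v_ev_f$, together with confirming that for two \emph{distinct} basis elements with nonvanishing bracket exactly one Kronecker term in \cref{lem:son} survives, so that the bracket yields a single basis element corresponding precisely to the triangle-completing edge rather than a sum. This is harmless for the span, since the sign and the choice of representative are washed out, but it is exactly the point where the purely combinatorial edge operation of \cref{def:tri.closure} must be matched to the Lie-algebraic closure, and it should be stated explicitly to make that passage rigorous.
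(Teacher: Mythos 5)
Your proof is correct and follows essentially the same route as the paper's: both identify $\hat\Gamma$ with the stabilized set $\Gamma^m$ via \cref{thm:lie-graph} and conclude $\lie(\Gamma)=\mathrm{span}\,(\Gamma^m)=\mathrm{span}\,(\hat\Gamma)$. The only difference is that you spell out, through the two inclusions, the identity $\lie(\Gamma)=\mathrm{span}\,\bigl(\bigcup_{m}\Gamma^m\bigr)$ that the paper's proof compresses into ``by the definition of $\lie(\Gamma)$''; this added care (including the sign bookkeeping for $[\Omega_{ab},\Omega_{cd}]=\pm\Omega_{ef}$ and the observation that $\bar G$ is triangularly closed) fills in detail rather than changing the argument.
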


\begin{proof}
  Let $m$ be a nonnegative integer satisfying $G^m=\bar{G}$, then
  \cref{thm:lie-graph} implies that $\hat\Gamma=\Gamma^m$, hence
  $\Gamma^r=\hat{\Gamma}$ holds for all $r\geqslant{}m$.
  Consequently, by the definition of $\lie(\Gamma)$, we have
  $\lie(\Gamma) =\mathrm{span}\,(\bigcup_{i=0}^\infty\Gamma^i)
  =\mathrm{span}\,(\Gamma^m) =\mathrm{span}\,(\hat\Gamma)$.
\end{proof}

For the purpose of controllability analysis, the subsets of $\mathcal{B}$ generating the whole Lie algebra $\mathfrak{so}(n)$ is of great interest. Therefore, we characterize such subsets by their associated graphs below, 
which is also a special
case of \cref{cor:lie-graph}.

\begin{corollary}
 \label{cor:completion}
 Consider a subset $\Gamma\subseteq\mathcal{B}$ with the associated graph $G=\tau(\Gamma)$, then $\lie(\Gamma)=\mathfrak{so}(n)$ if and only if $\bar{G}=K_n$.
\end{corollary}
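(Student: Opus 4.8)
The plan is to derive this as a corollary of the triangular-closure machinery already in place, namely \cref{cor:lie-graph} together with the identification $\tau(\mathcal{B})=K_n$ from \cref{prop:property_tau}(iii). The statement is an ``if and only if,'' but both directions should collapse into a single chain of equivalences once the right objects are lined up. First I would fix a subset $\hat\Gamma\subseteq\mathcal{B}$ satisfying $\tau(\hat\Gamma)=\bar G$; such a $\hat\Gamma$ exists and is unique because $\tau$ is a bijection by \cref{prop:property_tau}(i). By \cref{cor:lie-graph}, $\lie(\Gamma)=\mathrm{span}\,(\hat\Gamma)$.

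Next I would translate each side of the desired equivalence into a statement about $\hat\Gamma$. On the algebraic side, $\lie(\Gamma)=\mathfrak{so}(n)$ holds if and only if $\mathrm{span}\,(\hat\Gamma)=\mathfrak{so}(n)$; since $\hat\Gamma\subseteq\mathcal{B}$ and $\mathcal{B}$ is a basis of $\mathfrak{so}(n)$, a spanning subset of the basis must be the whole basis, so this is equivalent to $\hat\Gamma=\mathcal{B}$. On the graph side, $\bar G=K_n$ means $\tau(\hat\Gamma)=K_n=\tau(\mathcal{B})$, and again by injectivity of $\tau$ this is equivalent to $\hat\Gamma=\mathcal{B}$. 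Thus both conditions are equivalent to the single combinatorial condition $\hat\Gamma=\mathcal{B}$, which gives the claimed equivalence.

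The main subtlety, and the only place the argument needs care, is the step ``a subset of a basis that spans the whole space must be the entire basis.'' This is where linear independence of $\mathcal{B}$ is used: because the standard basis elements $\Omega_{ij}$ are linearly independent, $\mathrm{span}\,(\hat\Gamma)=\mathfrak{so}(n)$ forces $\hat\Gamma$ to contain all $n(n-1)/2$ basis vectors, hence $\hat\Gamma=\mathcal{B}$. I would state this explicitly rather than treat it as obvious, since it is the one nontrivial linear-algebra input; everything else is a formal consequence of the bijectivity of $\tau$ and the already-established \cref{cor:lie-graph}. No induction or new graph construction is needed, so I expect the proof to be short, the entire content being the assembly of these three facts into the equivalence $\lie(\Gamma)=\mathfrak{so}(n)\iff\hat\Gamma=\mathcal{B}\iff\bar G=K_n$.
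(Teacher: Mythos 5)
Your proposal is correct and follows essentially the same route as the paper's proof: both rest on \cref{cor:lie-graph} together with the bijectivity of $\tau$ and the identity $\tau(\mathcal{B})=K_n$ from \cref{prop:property_tau}, reducing everything to the condition $\hat\Gamma=\mathcal{B}$. Your version merely reorganizes the two directions into a single chain of equivalences and makes explicit the linear-independence step (a spanning subset of the basis $\mathcal{B}$ must be all of $\mathcal{B}$) that the paper leaves implicit in its necessity argument.
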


\begin{proof}
  (Sufficiency): Let $\hat{\Gamma}\subseteq\mathcal{B}$ satisfy
  $\bar{G}=\tau(\hat{\Gamma})=K_n$, then the properties (i) and (iii)
  in \cref{prop:property_tau} imply
  $\hat\Gamma=\mathcal{B}$. Consequently,
  $\lie(\Gamma)=\mathrm{span}\,
  (\hat\Gamma)=\mathrm{span}\,(\mathcal{B})=\mathfrak{so}(n)$ by
  \cref{cor:lie-graph}.
    
  (Necessity): If $\lie(\Gamma)=\mathfrak{so}(n)$, then there exists
  some nonnegative integer $m$ such that $\Gamma^m=\mathcal{B}$. By
  \cref{thm:lie-graph}, we obtain
  $\bar{G}\supseteq{}G^m=\tau(\Gamma^m)=K_n$. On the other hand,
  because of $\bar{G}\subseteq K_n$, we conclude $\bar{G}=K_n$.
\end{proof}

Furthermore, \cref{cor:completion} sheds light on a graph
representation of controllability, which in turn can be characterized
in terms of graph connectivity. In the following section, we will
rigorously investigate this observation.

\subsubsection{Controllability Characterization in Terms of Graph Connectivity} 
\label{sec:graph_controllability}

The relationship between Lie brackets and graph operations developed in Section~\ref{sec:Lie_graph} enables us to employ graph theory techniques to analyze controllability of systems on $\son$ as in \cref{eq:system_SOn}. In particular, motivated by the connection between a Lie subalgebra and its associated graph presented in \cref{cor:completion}, controllability can be analyzed through the notion of triangular closure defined in \cref{def:tri.closure}.

\begin{proposition}
  \label{prop:controllability}
  The bilinear system in \cref{eq:system_SOn} is controllable on
  $\son$ if and only if $\overline{\tau(\Gamma)}=K_n$, where $\tau$ is
  defined as in \cref{eq:graph-map},
  $\Gamma=\{\Omega_{i_0j_0},\dots,\Omega_{i_mj_m}\}$, and $K_n$ is a
  complete graph of $n$ vertices.
\end{proposition}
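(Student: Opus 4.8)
The plan is to derive the claimed equivalence by composing two biconditionals that are already in hand: the Lie algebra rank condition of \cref{thm:LARC} and the graph-completion criterion of \cref{cor:completion}. Because both results are phrased in terms of the same generating set $\Gamma=\{\Omega_{i_0j_0},\dots,\Omega_{i_mj_m}\}$, the argument reduces to verifying that their hypotheses hold for the system in \cref{eq:system_SOn} and then stringing the two equivalences together.

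First I would invoke \cref{thm:LARC}. The group $\son$ is compact and connected, which is precisely the setting in which the LARC is stated, so it applies to \cref{eq:system_SOn} and yields the first equivalence: the system is controllable on $\son$ if and only if $\lie(\Gamma)=\mathfrak{so}(n)$. Here it is the compactness of $\son$ that allows the drift field $\Omega_{i_0j_0}$ to enter $\Gamma$ on the same footing as the control fields; one only needs to note that the $\Gamma$ in the proposition coincides with the generating set used in the LARC, which it does by construction.

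Next I would apply \cref{cor:completion} to the very same $\Gamma$. With $G=\tau(\Gamma)$ the associated graph, that corollary supplies the second equivalence, $\lie(\Gamma)=\mathfrak{so}(n)$ if and only if $\overline{\tau(\Gamma)}=K_n$. Chaining the two biconditionals then gives
\[
  \text{controllable on } \son
  \iff \lie(\Gamma)=\mathfrak{so}(n)
  \iff \overline{\tau(\Gamma)}=K_n,
\]
which is exactly the assertion of the proposition.

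The honest assessment is that there is no genuine obstacle at this point: the substantive work has already been carried out in \cref{cor:lie-graph,cor:completion}, which translate iterated Lie brackets into the triangular-closure operation on graphs, and in the classical LARC. This proposition is therefore essentially a restatement of \cref{cor:completion} in the language of control theory, and the proof amounts to a short composition once the hypotheses of the two cited results are confirmed to hold.
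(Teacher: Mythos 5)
Your proposal is correct and follows exactly the paper's own argument: invoke the LARC to reduce controllability to $\lie(\Gamma)=\mathfrak{so}(n)$, then apply \cref{cor:completion} to translate that condition into $\overline{\tau(\Gamma)}=K_n$. The additional remark about compactness justifying the inclusion of the drift term in $\Gamma$ is a correct (and welcome) elaboration, but the substance of the proof is the same two-step composition the paper gives.
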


\begin{proof}
  By the LARC shown in \cref{thm:LARC}, the system in
  \cref{eq:system_SOn} is controllable on $\mathrm{SO}(n)$ if and only
  if $\lie(\Gamma)=\mathfrak{so}(n)$, which is equivalent to
  $\overline{\tau(\Gamma)}=K_n$ by \cref{cor:completion}.
\end{proof}

Using the following two examples, we will verify \cref{prop:controllability} and draw a parallel between examining the LARC and generating triangular closure of the graph associated with the considered system. This comparison in turn illuminates a graphic visualization of the algebraic procedure of generating Lie algebras for the set of drift and control vector fields.

\begin{example}
  \label{ex:controllable}
  Consider the system on $\mathrm{SO}(4)$ given by 
  \begin{equation}
    \label{eq:controllable.so4}
    \dot{X}(t)=(u_1\Omega_{12}+u_2\Omega_{23}+u_3\Omega_{13}+u_4\Omega_{34})X(t),\quad X(0)=I.
  \end{equation}
  Applying $\tau$ to the set of the control vector fields $\Gamma$ results in its associated graph $G=(V,E)$ as follows,
  \[
    \Gamma=\{\Omega_{12},\Omega_{23},\Omega_{13},\Omega_{34}\}
    \xleftrightarrow{\tau} \{v_1v_2,v_2v_3,v_1v_3,v_3v_4\}=E.
  \]
  Because the first order Lie brackets
  $[\Omega_{23},\Omega_{34}]=\Omega_{24}$ and
  $[\Omega_{13},\Omega_{34}]=\Omega_{14}$ are not in $\Gamma$, we have
  $\Gamma^1=\Gamma\cup\{\Omega_{24},\Omega_{14}\}$. Correspondingly,
  according to \cref{cor:lie-graph}, $G^1=(V,E^{1})$ can be obtained
  by applying $\tau$ to $\Gamma^1$, i.e., 
  \[
    \Gamma^1=\Gamma\cup\{\Omega_{24},\Omega_{14}\}
    \xleftrightarrow{\tau} \{v_2v_4,v_1v_4\}\cup E=E^{1}.
  \]
  Notice that $\mathrm{span}\,(\Gamma^1)=\mathfrak{so}(4)$ and
  simultaneously $G^1=\bar{G}=K_4$, which concludes controllability of
  the system in \cref{eq:controllable.so4} from both algebraic and
  graph-theoretic perspectives. The graphs $G$ and $G^1$ are shown in
  \cref{fig:controllable.so4}. In particular, the two red edges in
  $G^1$, which are not in $G$, correspond to the elements in
  $[\Gamma,\Gamma]$.
  
  \begin{figure}[tbh]
    \centering
    \begin{tikzpicture}[baseline=5ex,thick]
      \node[xshift=4*\R,left] at (0,1) {$v_1$};
      \node[xshift=4*\R,right] at (1,1) {$v_2$};
      \node[xshift=4*\R,right] at (1,0) {$v_3$};
      \node[xshift=4*\R,left] at (0,0) {$v_4$};
      
      \node[left] at (0,1) {$v_1$};
      \node[right] at (1,1) {$v_2$};
      \node[right] at (1,0) {$v_3$};
      \node[left] at (0,0) {$v_4$};
      
      \node[below] at (0.5,-0.2) {$G$};
      \node[xshift=4*\R,below] at (0.5,-0.2) {$G^{1}$};
      
      \draw (0,1) -- (1,1);
      \draw (1,1) node[vertex]{} -- (1,0);
      \draw (1,0) -- (0,1) node[vertex]{};
      \draw (1,0) node[vertex]{} -- (0,0) node[vertex]{};

      \node[xshift=2*\R] at (0.5,0.5) {$\dashrightarrow$};
      
      \draw[xshift=4*\R, red] (0,0) -- (0,1);
      \draw[xshift=4*\R, red] (0,0) -- (1,1);
      \draw[xshift=4*\R] (0,1) -- (1,1);
      \draw[xshift=4*\R] (1,1) node[vertex]{} -- (1,0);
      \draw[xshift=4*\R] (1,0) -- (0,1) node[vertex]{};
      \draw[xshift=4*\R] (1,0) node[vertex]{} -- (0,0) node[vertex]{};
    \end{tikzpicture}
    \caption{The graph $G$ associated with the 
        system~\cref{eq:controllable.so4} in
        \cref{ex:controllable} and its triangular closure
        $G^1$.  Note that the red edges in $G^1$ correspond to the
        vector fields generated by the first-order Lie brackets of the
        control vector fields in $\Gamma$.}
    \label{fig:controllable.so4}
  \end{figure}
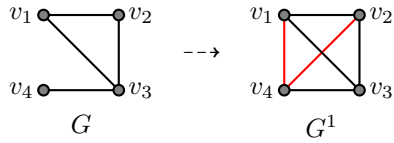
\end{example}

\Cref{ex:controllable} presents a controllable system whose associated graph has a complete triangular closure, which in turn validates the sufficiency of \cref{prop:controllability}. The necessity is illustrated using the following example through an uncontrollable system.

\begin{example}
  \label{ex:uncontrollable.so5}
  Consider the system on $\mathrm{SO}(5)$ driven by three control
  inputs, given by
  \begin{equation}
    \label{eq:uncontrollable.so5}
    \dot{X}(t)=(u_1\Omega_{12}+u_2\Omega_{23}+u_3\Omega_{34})X(t),\quad X(0)=I,
  \end{equation}
  and let $\Gamma=\{\Omega_{12},\Omega_{23},$ $\Omega_{34}\}$ denote
  the set of control vector fields. Some straightforward calculations
  yield the Lie algebra
  $\lie(\Gamma)={\mathrm{span}}\,\{\Omega_{12},\Omega_{23},
  \Omega_{34},\Omega_{13},\Omega_{14},\Omega_{24}\}$, which has
  dimension $6$. Therefore, the system in \cref{eq:uncontrollable.so5}
  is not controllable, since $\dim\mathfrak{so}(5)=10$. Using the
  graph-theoretic approach, \cref{fig:uncontrollable.so5} shows the
  procedure of generating $\bar{H}$ from $H=\tau(\Gamma)$. In
  particular, $\bar{H}=H^2$ shown in \cref{fig:uncontrollable.so5} is
  not complete, which verifies the necessity of
  \cref{prop:controllability}.

  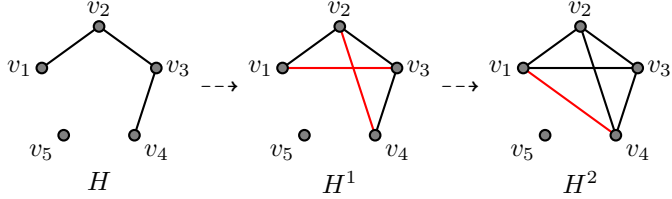
\begin{figure}[tbh]
    \centering
    \begin{tikzpicture}[baseline=-5ex,thick]
      \node[below] at (270:1cm) {$H$};
      \node[xshift=4*\R, below] at (270:1cm) {$H^1$};
      \node[xshift=8*\R, below] at (270:1cm) {$H^2$};
      
      \node[left] at (162:\R) {$v_1$};
      \node[above] at (90:\R) {$v_2$};
      \node[right] at (18:\R) {$v_3$};
      \node[below right] at (306:\R) {$v_4$};
      \node[below left] at (234:\R) {$v_5$};
      
      \node[xshift=4*\R, left] at (162:\R) {$v_1$};
      \node[xshift=4*\R, above] at (90:\R) {$v_2$};
      \node[xshift=4*\R, right] at (18:\R) {$v_3$};
      \node[xshift=4*\R, below right] at (306:\R) {$v_4$};
      \node[xshift=4*\R, below left] at (234:\R) {$v_5$};
      
      \node[xshift=8*\R, left] at (162:\R) {$v_1$};
      \node[xshift=8*\R, above] at (90:\R) {$v_2$};
      \node[xshift=8*\R, right] at (18:\R) {$v_3$};
      \node[xshift=8*\R, below right] at (306:\R) {$v_4$};
      \node[xshift=8*\R, below left] at (234:\R) {$v_5$};      
      
      \draw (162:\R) node[vertex]{} -- (90:\R);
      \draw (90:\R) node[vertex]{} -- (18:\R);
      \draw (18:\R) node[vertex]{} -- (306:\R) node[vertex]{};
      \draw (234:\R) node[vertex]{};

      \node[xshift=2*\R] at (0,0) {$\dashrightarrow$};

      \draw[xshift=4*\R, red] (90:\R) -- (306:\R);
      \draw[xshift=4*\R, red] (162:\R) -- (18:\R);
      \draw[xshift=4*\R] (162:\R) node[vertex]{} -- (90:\R);
      \draw[xshift=4*\R] (90:\R) node[vertex]{} -- (18:\R);
      \draw[xshift=4*\R] (18:\R) node[vertex]{} -- (306:\R) node[vertex]{};
      \draw[xshift=4*\R] (234:\R) node[vertex]{};

      \node[xshift=6*\R] at (0,0) {$\dashrightarrow$};

      \draw[xshift=8*\R, red] (162:\R) -- (306:\R);
      \draw[xshift=8*\R] (90:\R) -- (306:\R);
      \draw[xshift=8*\R] (162:\R) -- (18:\R);
      \draw[xshift=8*\R] (162:\R) node[vertex]{} -- (90:\R);
      \draw[xshift=8*\R] (90:\R) node[vertex]{} -- (18:\R);
      \draw[xshift=8*\R] (18:\R) node[vertex]{} -- (306:\R) node[vertex]{};
      \draw[xshift=8*\R] (234:\R) node[vertex]{};
    \end{tikzpicture}
    \caption{The graph visualization of Lie bracketing control vector
      fields of the system in \cref{eq:uncontrollable.so5} in
      \cref{ex:uncontrollable.so5}. Specifically, the graph $H$ is
      associated with the set of control vector fields, $H^1$
      visualizes the first-order Lie brackets, and $H^2$ is the
      triangular closure of $H$. Note that the red edges correspond to
      the vector fields in $\Gamma$ generated by Lie brackets.}
    \label{fig:uncontrollable.so5}
  \end{figure}
\end{example}

It is worth noting that the graph $G$ in \cref{fig:controllable.so4}
associated with the controllable system in \cref{eq:controllable.so4}
is connected, but the graph $H$ in \cref{fig:uncontrollable.so5}
associated with the uncontrollable system in
\cref{eq:uncontrollable.so5} is not. This observation inspires the
characterization of controllability for systems on $\son$ by graph
connectivity.

\begin{theorem}
  \label{thm:connectivity.controllability}
  The system in \cref{eq:system_SOn} is controllable on $\son$ if and
  only if $\tau(\Gamma)$ is connected, where
  $\Gamma=\{\Omega_{i_0 j_0},\dots,\Omega_{i_m j_m}\}$ and
  $\tau(\Gamma)$ is the graph associated with $\Gamma$.
\end{theorem}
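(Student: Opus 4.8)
The plan is to reduce the statement to \cref{prop:controllability}, which already establishes that the system in \cref{eq:system_SOn} is controllable on $\son$ if and only if $\overline{\tau(\Gamma)}=K_n$. Writing $G=\tau(\Gamma)=(V,E)$, it therefore suffices to prove the purely graph-theoretic equivalence that $G$ is connected if and only if its triangular closure $\bar{G}$ equals the complete graph $K_n$. This isolates the combinatorial heart of the theorem and lets me avoid re-deriving anything about Lie brackets.

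The key structural fact I would establish is that $\bar{G}$ is precisely the disjoint union of cliques supported on the connected components of $G$; equivalently, for $i\neq j$ we have $v_iv_j\in\bar{E}$ if and only if $v_i$ and $v_j$ lie in the same connected component of $G$. The ``only if'' direction follows by an induction on the stage $m$ at which an edge first enters the chain $G^0\subseteq G^1\subseteq\cdots$ of \cref{def:tri.closure}: an edge $v_iv_j$ added at step $m+1$ arises from a common neighbor $v_k$ with $v_iv_k,v_kv_j\in E^m$, and by the inductive hypothesis each of these pairs already lies in a single component, whence $v_i$ and $v_j$ do as well. Thus every closure step only adds edges internal to existing components, and no component is ever merged with another or enlarged in its vertex content.

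For the ``if'' direction I would induct on the length of a path joining two vertices of the same component. A path $v_i\,v_k\,v_j$ of length two yields the edge $v_iv_j$ after a single closure step, directly by condition (ii) of \cref{def:tri.closure}. Given a path of length $\ell\geqslant 2$, its first two edges produce a shortcut edge that reduces the path length by one; iterating this shortening shows that any two vertices connected by a path eventually become directly adjacent in $\bar{G}$. Combining the two directions proves the claim that $\bar{G}$ is the clique-union over the components of $G$.

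The equivalence is then immediate: $G$ is connected exactly when it has a single connected component spanning all of $V$, which by the claim is exactly when $\bar{G}$ contains every edge $v_iv_j$ with $i\neq j$, i.e.\ $\bar{G}=K_n$. Chaining this with \cref{prop:controllability} gives the theorem. I expect the main obstacle to be the path-shortening induction in the ``if'' direction—that is, making precise that triangular closure behaves as a transitive closure on each component—but this is exactly the equivalence hinted at in the remark following \cref{def:tri.closure}, so the argument should go through cleanly once that induction is set up carefully.
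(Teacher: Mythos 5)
Your proposal is correct and follows essentially the same route as the paper: reduce to \cref{prop:controllability} and then prove that $\tau(\Gamma)$ is connected if and only if its triangular closure is complete, with a path-shortening induction for one direction and an ``unfold the closure step'' argument for the other. The only cosmetic difference is that you package the second direction as the stronger claim that $\bar{G}$ is the clique-union over the components of $G$, which the paper records separately as \cref{lem:tri.closure.complete.components}.
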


\begin{proof}
  Owing to \cref{prop:controllability}, it suffices to prove that the
  triangular closure of $\tau(\Gamma)$ is complete if and only if
  $\tau(\Gamma)$ is connected.
  
  (Sufficiency): Suppose that $G=\tau(\Gamma)=(V,E)$ is connected,
  then there is a path in $G$ from $v_i$ to $v_j$ for any
  $v_i,v_j\in{}V$, say $v_iw_1w_2\cdots{}w_kv_j$ with
  $w_1,\ldots,w_k\in{}V$. Therefore, we have
  $v_iw_2\in{}E^1,\ldots, v_iw_k\in{}E^{k-1}$ and
  $v_iv_j\in{}E^{k}\subseteq\bar{E}$. Since $v_i,v_j\in V$ are chosen
  arbitrarily, we conclude that the triangular closure $\bar{G}$
  contains all edges $v_iv_j$, hence $\bar{G}=K_n$. In addition, this
  process of generating $\bar G$ is illustrated in
  \cref{fig:proof.completeness} with the case of $k=5$.

  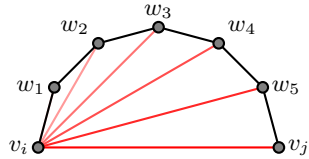
\begin{figure}[htbp]
    \centering
    \small
    \begin{tikzpicture}[thick]
      \node[left] at (180:2*\R) {$v_i$};
      \node[left] at (150:2*\R) {$w_1$};
      \node[above left] at (120:2*\R) {$w_2$};
      \node[above] at (90:2*\R) {$w_3$};
      \node[above right] at (60:2*\R) {$w_4$};
      \node[right] at (30:2*\R) {$w_5$};
      \node[right] at (0:2*\R) {$v_j$};
      
      \draw[red!25] (180:2*\R) -- (150:2*\R);
      \draw[red!40] (180:2*\R) -- (120:2*\R);
      \draw[red!55] (180:2*\R) -- (90:2*\R);
      \draw[red!70] (180:2*\R) -- (60:2*\R);
      \draw[red!85] (180:2*\R) -- (30:2*\R);
      \draw[red!] (180:2*\R) -- (0:2*\R);
    
      \draw \foreach \x in {0,1, ...,5}
      {
        (30*\x:2*\R) node[vertex]{} -- (30*\x+30:2*\R)
      } (180:2*\R) node[vertex]{};
    \end{tikzpicture}
    \caption{Illustration of the proof of sufficiency of
      \cref{thm:connectivity.controllability}.}
    \label{fig:proof.completeness}
  \end{figure}

  (Necessity): We assume that the triangular closure $\bar G$ of
  $G=\tau(\Gamma)$ is complete. If there exists an edge $v_iv_j$
  \emph{not} in $G$, since $v_iv_j$ is in $\bar{G}=(V,\bar{E})$, we
  may then assume $v_iv_j\in{}E^k$ and $v_iv_j\not\in{}E^{k-1}$ for
  some positive integer $k$. Hence, by \cref{def:tri.closure}, there
  is some vertex $w_1$ such that $v_iw_1,w_1v_j\in{}E^{k-1}$, i.e.,
  there exists a path $v_iw_1v_j$ in $G^{k-1}$ connecting $v_i$ and
  $v_j$. Repeating this procedure results in a path in $G$ connecting
  $v_i$ and $v_j$, which implies the connectivity of $G$, and hence
  the proof is done.
\end{proof}

\begin{remark}
  In addition to controllability characterization, \cref{thm:connectivity.controllability} highlights a crucial property of the map $\tau$, that is, $\lie(\Gamma)=\mathfrak{so}(n)$
  for some $\Gamma\subseteq\mathcal{B}$ \emph{if and only if} $\tau(\Gamma)$ is \emph{connected}, which is an equivalent formulation of \cref{thm:connectivity.controllability}.
\end{remark}

Because a connected graph with $n$ vertices contains at least $n-1$ edges, \cref{thm:connectivity.controllability} also identifies the minimum number of control inputs for the system in \cref{eq:system_SOn} to be controllable, as identified using the symmetric group method presented in \cref{thm:SOn_Sn,rem:number_of_control}.

\begin{corollary}
  \label{cor:control_number}
  If a system on $\son$ in \cref{eq:system_SOn} is controllable, then
  the number of control inputs $m$ is at least $n-2$, i.e.,
  $m\geqslant{}n-2$.
\end{corollary}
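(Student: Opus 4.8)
The plan is to combine the graph-theoretic characterization of controllability in \cref{thm:connectivity.controllability} with the elementary fact from graph theory that any connected graph on $n$ vertices must contain at least $n-1$ edges. The only subtlety is careful bookkeeping of the cardinality of $\Gamma$: the set $\Gamma$ in \cref{eq:system_SOn} comprises the single drift field $\Omega_{i_0j_0}$ together with the $m$ control fields $\Omega_{i_1j_1},\dots,\Omega_{i_mj_m}$, so $\Gamma$ contains \emph{at most} $m+1$ distinct elements, i.e. $|\Gamma|\leqslant m+1$.

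First I would invoke \cref{thm:connectivity.controllability}: if the system is controllable on $\son$, then its associated graph $\tau(\Gamma)$ is connected. Since $\tau(\Gamma)$ is a graph on the $n$ vertices $v_1,\dots,v_n$, connectivity forces its edge set to satisfy $|E_\Gamma|\geqslant n-1$. Next I would use property (ii) of \cref{prop:property_tau}, namely $|E_\Gamma|=|\Gamma|$, together with the bound $|\Gamma|\leqslant m+1$, to obtain the chain $m+1\geqslant|\Gamma|=|E_\Gamma|\geqslant n-1$. Rearranging yields $m\geqslant n-2$, which is precisely the claim.

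There is essentially no hard part here; the statement is a direct corollary of the already-established connectivity criterion, and the argument reduces to the standard edge lower bound for connected graphs combined with an off-by-one count that accounts for the drift term contributing one element to $\Gamma$. The one point I would emphasize for correctness is that $\tau$ is a genuine bijective map (property (i) of \cref{prop:property_tau}), unlike the relation $\iota$, so the passage from vector fields to edges introduces no ambiguity and the count $|E_\Gamma|=|\Gamma|$ is unambiguous. This is also consistent with the bound $m\geqslant n-2$ obtained independently by the symmetric group method in \cref{thm:SOn_Sn,rem:number_of_control}.
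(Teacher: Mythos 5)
Your argument is correct and is exactly the paper's own reasoning: the paper derives this corollary from \cref{thm:connectivity.controllability} together with the fact that a connected graph on $n$ vertices has at least $n-1$ edges, with the drift field accounting for the extra element of $\Gamma$. Your additional care about $|\Gamma|\leqslant m+1$ and the bijectivity of $\tau$ only makes the bookkeeping more explicit than the paper's one-sentence justification.
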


Although \cref{thm:connectivity.controllability} is developed to
examine controllability, it also helps establish some general facts in
graph theory from the control systems perspective. In the following,
we present one such result that is related to triangular
closures. This property also plays an important role in characterizing
controllable submanifolds for uncontrollable systems by connected
component of the graph associated with the control system.

\begin{lemma}
  \label{lem:tri.closure.complete.components}
  The triangular closure $\bar{G}$ of a graph $G$ is a disjoint union
  of its complete components.
\end{lemma}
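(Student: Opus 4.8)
The plan is to reduce the statement to the sufficiency direction of \cref{thm:connectivity.controllability}, applied separately to each connected component of $G$. The crucial observation is that the triangular closure operation in \cref{def:tri.closure} never introduces an edge between two distinct connected components of $G$: a new edge $v_iv_j$ is added only when there is a witness vertex $v_k$ with $v_iv_k, v_kv_j \in E^m$, which forces $v_i$, $v_k$, and $v_j$ to lie in a common component. First I would make this precise by induction on $m$, showing that every edge in $E^m$ joins two vertices lying in the same connected component of $G$. The base case $E^0=E$ is immediate, and the inductive step follows directly from the defining rule, since any newly added edge is witnessed by a vertex joined to both of its endpoints by edges already in $E^m$.

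Granting this, the connected components of $\bar G$ coincide, as vertex sets, with those of $G$, and the restriction of $\bar G$ to the vertex set $V_C$ of any component $C$ is precisely the triangular closure of $C$ taken as a graph in its own right. Next I would note that each such $C$ is connected by definition, so the sufficiency argument in the proof of \cref{thm:connectivity.controllability} applies verbatim to $C$: for any two vertices $v_i,v_j\in V_C$ there is a path in $C$, and iterating the triangle rule along that path produces the edge $v_iv_j$ in the closure. Hence the closure of $C$ is the complete graph on $V_C$.

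Combining these two facts, each connected component of $G$ becomes a complete graph in $\bar G$, while distinct components remain mutually disconnected, so $\bar G$ is exactly the disjoint union of these complete graphs, which is the assertion of the lemma. Isolated vertices are covered automatically, since the complete graph on a single vertex is the vertex itself.

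I expect the main obstacle to be the bookkeeping in the localization step, namely making rigorous that computing the triangular closure of the whole graph and then restricting to a component yields the same result as computing the triangular closure of that component in isolation. This requires not only that no cross-component edges ever appear, which is the easy induction above, but also that the formation of an edge inside one component never relies on a witness vertex lying in another component. This last point is settled by the same observation: any witness vertex $v_k$ for an edge $v_iv_j$ must be adjacent to $v_i$ in $E^m$ and hence, by the inductive claim, already lies in the component of $v_i$.
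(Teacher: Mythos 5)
Your proposal is correct and follows essentially the same route as the paper, which simply applies the sufficiency argument from the proof of \cref{thm:connectivity.controllability} to each connected component of $G$. Your additional induction showing that the closure operation never creates cross-component edges is a worthwhile elaboration of a detail the paper leaves implicit, but it is not a different approach.
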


\begin{proof}
  The proof is a direct application of the proof of
  \cref{thm:connectivity.controllability} to each connected component
  of $G$.
\end{proof}

By the above \cref{lem:tri.closure.complete.components}, we can adopt our main result in \cref{thm:connectivity.controllability} to study an uncontrollable system by taking the triangular closure of its associated graph, which is the union of the triangular closures of all connected components.

\begin{theorem}
  \label{thm:controllable.submanifold}
  The controllable submanifold of the system in \cref{eq:system_SOn}
  is determined by the connected components of its associated graph.
\end{theorem}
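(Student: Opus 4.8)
The plan is to reduce the statement to the direct-sum decomposition of $\lie(\Gamma)$ already available in the graph language, and then invoke the Frobenius integrability argument used in \cref{cor:Sn_submanifold}. First I would decompose the associated graph $G=\tau(\Gamma)=(V,E)$ into its connected components $G_1,\dots,G_l$, whose vertex sets $V_1,\dots,V_l$ partition $V=\{v_1,\dots,v_n\}$. Applying \cref{lem:tri.closure.complete.components}, the triangular closure $\bar{G}$ is precisely the disjoint union of the complete graphs on the $V_k$; equivalently, the edge set $\bar{E}$ consists of exactly those pairs $v_iv_j$ whose endpoints lie in a common component.

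Next I would translate this back to the Lie algebra via \cref{cor:lie-graph}. Letting $\hat\Gamma\subseteq\mathcal{B}$ be the unique subset (unique by \cref{prop:property_tau}(i)) with $\tau(\hat\Gamma)=\bar{G}$, we obtain $\lie(\Gamma)=\mathrm{span}\,(\hat\Gamma)=\bigoplus_{k=1}^l\mathrm{span}\,\{\Omega_{ij}:v_i,v_j\in V_k\}$. The crucial point, which makes this a direct sum of \emph{Lie} subalgebras and not merely of vector spaces, is that whenever $v_i,v_j$ lie in one component and $v_p,v_q$ lie in a different one, the index sets $\{i,j\}$ and $\{p,q\}$ are disjoint, so \cref{lem:son} forces $[\Omega_{ij},\Omega_{pq}]=0$. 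Hence the component subspaces commute and each is separately closed under the bracket, so $\lie(\Gamma)$ is involutive.

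Finally, I would conclude exactly as in \cref{cor:Sn_submanifold}: by the Frobenius theorem the right-invariant distribution $X\mapsto\lie(\Gamma)\cdot X$ on $\son$ is completely integrable, its maximal integral manifolds foliate $\son$, and since $X(0)=I$ the controllable submanifold of \cref{eq:system_SOn} is the leaf through the identity. This leaf is the connected Lie subgroup of $\son$ with Lie algebra $\bigoplus_{k=1}^l\mathrm{span}\,\{\Omega_{ij}:v_i,v_j\in V_k\}$, which is determined entirely by the partition $V=V_1\sqcup\cdots\sqcup V_l$ into connected components; this is the content of the theorem. Note that this also recovers \cref{thm:connectivity.controllability} in the extreme case $l=1$.

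The main obstacle I anticipate is not any single computation but rather pinning down the precise meaning of ``determined by'': the genuine work is to show that the connected-component partition recovers the full involutive distribution, and this rests squarely on the vanishing of cross-component brackets supplied by \cref{lem:son}. Everything else reuses the graph--algebra dictionary of \cref{cor:lie-graph,lem:tri.closure.complete.components} together with the standard Frobenius and leaf-through-identity argument, so the only care required is the bookkeeping identifying components with the orbits $\mathcal{O}_k$ of \cref{cor:Sn_submanifold}.
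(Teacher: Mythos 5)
Your proposal is correct and follows essentially the same route as the paper's proof: both pass from connected components of $\tau(\Gamma)$ to the complete components of the triangular closure via \cref{lem:tri.closure.complete.components}, identify $\lie(\Gamma)$ with $\mathrm{span}\,\tau^{-1}(\bar{G})$ using the graph--algebra dictionary, and conclude with the Frobenius theorem and the leaf through the identity. Your version is slightly more explicit than the paper's in writing out the direct-sum decomposition and invoking \cref{lem:son} for the vanishing of cross-component brackets, but this is elaboration rather than a different argument.
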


\begin{proof}
  Let $\Gamma\subseteq\mathcal{B}$ be the set of vector fields
  governing the dynamics of the system in \cref{eq:system_SOn},
  $G=\tau(\Gamma)$ be the graph representation of $\Gamma$, and
  $\bar{G}$ denote the triangular closure of $G$. Since connected
  components of $G$ determine the complete components of $\bar{G}$, it
  suffices to show that the controllable submanifold of the system is
  determined by the complete components of $\bar{G}$.

  According to the Frobenius Theorem~\cite{warner.lie.groups}, the
  controllable submanifold of the system in \cref{eq:system_SOn} is
  the maximal integral submanifold of $\lie(\Gamma)$ passing through
  the identity matrix $I$. Hence, by
  \cref{lem:tri.closure.complete.components}, because of the
  completeness of each component of $\bar{G}$, the set
  $\tau^{-1}(\bar{G})\subseteq\mathcal{B}$ is closed under Lie
  bracket, which implies
  $\mathrm{span}\,\tau^{-1}(\bar{G})=\lie(\Gamma).$ Therefore, we
  conclude that $\lie(\Gamma)$, and thus its maximal integral
  submanifold, is determined by $\bar G$.
\end{proof}

\Cref{thm:controllable.submanifold} further reveals a one-to-one correspondence between the Lie algebra generated by a subset of $\mathcal{B}$ and the triangular closure of its associated graph in $\mathcal{G}$. Leveraging this one-to-one correspondence, we are able to give an explicit characterization of controllable submanifolds for uncontrollable systems in terms of connected components of their associated graphs.

\begin{example}[Controllable Submanifold] \label{ex:submanifold}
  Consider two bilinear systems defined on $\mathrm{SO}(6)$ in the form of \cref{eq:system_SOn} governed by the vector fields $\Gamma_1=\{\Omega_{12},\Omega_{23},\Omega_{45},\Omega_{46}\}$ and
  $\Gamma_2=\{\Omega_{13},\Omega_{23},\Omega_{46},\Omega_{56}\}$, respectively. \Cref{fig:controllable.submanifolds} shows their associated graphs $G_1=\tau(\Gamma_1)$ and $G_2=\tau(\Gamma_2)$, neither of which is connected. Therefore, by \cref{thm:lie-graph}, both systems are not controllable on $\mathrm{SO}(6)$. On the other hand, we notice that $\overline{G_1}=\overline{G_2}$. So by \cref{thm:controllable.submanifold}, the two systems have the same   controllable submanifold. Specifically, the controllable submanifold is the Lie subgroup of $\mathrm{SO}(6)$ with the Lie algebra
  \[
    \lie(\Gamma_1)=\lie(\Gamma_2)
    =\mathrm{span}\,\{\Omega_{ij}:1\leqslant{}i<j\leqslant{}3\}
    \oplus\mathrm{span}\,\{\Omega_{ij}:4\leqslant{}i<j\leqslant{}6\}.
  \]
  Moreover, both $\overline{G_1}$ and $\overline{G_2}$ contain two complete components with the vertex sets $U=\{v_1,v_2,v_3\}$ and $W=\{v_4,v_5,v_6\}$, which are also the vertex sets of the connected components of $G_1$ (or $G_2$). It then follows that the Lie algebra of the controllable submanifold, $\mathrm{span}\,\{\Omega_{ij}:v_i,v_j\in{}U\}
  \oplus\mathrm{span}\,\{\Omega_{ij}:v_i,v_j\in{}W\}$, can be explicitly characterized by the vertex sets of the complete components of $\overline{G_1}$ and $\overline{G_2}$, as well as the
  connected components of $G_1$ and $G_2$.
  
  \begin{figure}[tbh]
    \centering
    \tikzstyle{r}=[xshift=5*\R]
    \begin{tikzpicture}[thick]
      \node[left] at (180:\R) {$v_1$};
      \node[above left] at (120:\R) {$v_2$};
      \node[above right] at (60:\R) {$v_3$};
      \node[right] at (0:\R) {$v_4$};
      \node[below right] at (300:\R) {$v_5$};
      \node[below left] at (240:\R) {$v_6$};

      \node[r, left] at (180:\R) {$v_1$};
      \node[r, above left] at (120:\R) {$v_2$};
      \node[r, above right] at (60:\R) {$v_3$};
      \node[r, right] at (0:\R) {$v_4$};
      \node[r, below right] at (300:\R) {$v_5$};
      \node[r, below left] at (240:\R) {$v_6$};

      \node[below] at (270:1cm) {$G_1$};
      \node[r, below] at (270:1cm) {$\overline{G_1}$};      

      \draw (120:\R) -- (60:\R) node[vertex]{};
      \draw (180:\R) node[vertex]{} -- (120:\R) node[vertex]{};
      \draw (240:\R) node[vertex]{} -- (0:\R);
      \draw (0:\R) node[vertex]{} -- (300:\R) node[vertex]{};

      \node[xshift=2.5*\R] at (0,0) {$\dashrightarrow$};
      
      \draw[r, red] (60:\R) -- (180:\R);
      \draw[r, red] (240:\R) -- (300:\R);
      \draw[r] (120:\R) -- (60:\R);
      \draw[r] (120:\R) -- (60:\R) node[vertex]{};
      \draw[r] (180:\R) node[vertex]{} -- (120:\R) node[vertex]{};
      \draw[r] (240:\R) node[vertex]{} -- (0:\R);
      \draw[r] (0:\R) node[vertex]{} -- (300:\R) node[vertex]{};
    \end{tikzpicture}

    \medskip
    \begin{tikzpicture}[baseline=-5ex,thick]
      \node[left] at (180:\R) {$v_1$};
      \node[above left] at (120:\R) {$v_2$};
      \node[above right] at (60:\R) {$v_3$};
      \node[right] at (0:\R) {$v_4$};
      \node[below right] at (300:\R) {$v_5$};
      \node[below left] at (240:\R) {$v_6$};

      \node[r, left] at (180:\R) {$v_1$};
      \node[r, above left] at (120:\R) {$v_2$};
      \node[r, above right] at (60:\R) {$v_3$};
      \node[r, right] at (0:\R) {$v_4$};
      \node[r, below right] at (300:\R) {$v_5$};
      \node[r, below left] at (240:\R) {$v_6$};
      
      \node[below] at (270:1cm) {$G_2$};
      \node[r, below] at (270:1cm) {$\overline{G_2}$};

      \draw (180:\R) node[vertex]{} -- (60:\R);
      \draw (60:\R) node[vertex]{} -- (120:\R) node[vertex]{};
      \draw (0:\R) node[vertex]{} -- (240:\R);
      \draw (240:\R) node[vertex]{} -- (300:\R) node[vertex]{};

      \node[xshift=2.5*\R] at (0,0) {$\dashrightarrow$};

      \draw[r, red] (180:\R) -- (120:\R);
      \draw[r, red] (300:\R) -- (0:\R);
      \draw[r] (180:\R) node[vertex]{} -- (60:\R);
      \draw[r] (60:\R) node[vertex]{} -- (120:\R) node[vertex]{};
      \draw[r] (0:\R) node[vertex]{} -- (240:\R);
      \draw[r] (240:\R) node[vertex]{} -- (300:\R) node[vertex]{};
    \end{tikzpicture}
    \caption{The graphs and their triangular closures associated with
      the systems in \cref{ex:submanifold}. Specifically, the graphs
      $G_1$ and $G_2$ on the left are associated with the systems
      governed by $\Gamma_1$ and $\Gamma_2$, respectively, and their
      triangular closures $\overline{G_1}$ and $\overline{G_2}$ are on
      the right. Red edges correspond to vector fields generated by
      Lie brackets.}
    \label{fig:controllable.submanifolds}
  \end{figure}
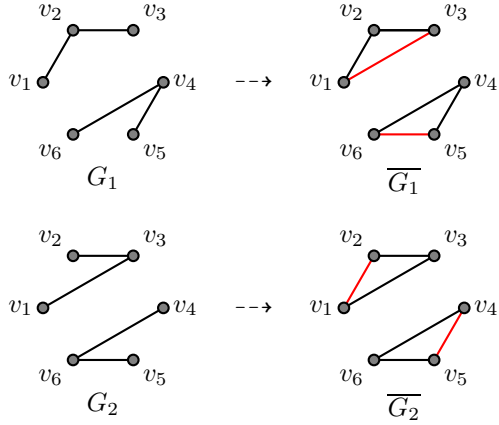
\end{example}

Furthermore, the developed method of characterizing controllability in terms of graph connectivity is not constrained to systems defined on Lie groups. In particular, as shown in the following example, it can be applied to study formation control of multi-agent systems defined on graphs. From an algebraic perspective, it is equivalent to using the graph-theoretic method to analyze the Lie algebra generated by \emph{symmetric matrices}.

\begin{example}[Formation Control]
  \label{ex:formation.control}
  In this example, we consider formation control of a multi-agent system, which concerns with the question of coordinating the system to a consensus state. For such a purpose, the dynamics of each agent in a network of $N$ agents with the coupling topology given by the graph $G=(V,E), V=\{v_1,\ldots,v_N\}$, is generally represented by 
  \begin{equation}
    \label{eq:dynamic.law}
    \dot{x}_i(t)=\sum_{j\in{}V(i)} u_{ij}(x_j-x_i), \quad{}
    1\leqslant{} i\leqslant{} N,
  \end{equation}
  where $x_i(t)\in\mathbb{R}^n$ denotes the state of the $i$-th agent, $V(i)=\{1\leqslant{} j\leqslant{} N: v_iv_j\in E\}$ denotes the set of neighboring agents of $i$, and $u_{ij}=u_{ji}$ are the external inputs that control the reciprocal interaction between the $i$-th and $j$-th agents~\cite{Chen14}.
  
  We will first formulate the dynamic law in \cref{eq:dynamic.law} into a matrix form, and then apply our analysis on a Lie algebra associated with it. To do this, let
  \[
    A_{ij}:=E_{ii}+E_{jj}-E_{ij}-E_{ji}
  \]
  be an $N$-by-$N$ symmetric matrix with zero row and column sums, and let $X\in\mathbb{R}^{N\times{}n}$ denote a matrix whose row vectors are the states of the agents: 
  \[
    X=\begin{bmatrix}
      x_1^{\tr} \\
      \vdots \\
      x_N^{\tr}
    \end{bmatrix}.
  \]
    Then, we can rewrite \cref{eq:dynamic.law} into the following matrix form,
  \begin{equation}
    \label{eq:dynamic.law.matrix}
    \dot{X}=\sum_{v_iv_j\in{}E}u_{ij}A_{ij}X.
  \end{equation}
  
  The formation controllability of the multi-agent system in \cref{eq:dynamic.law.matrix} is determined by the LARC~\cite{Chen14}. Thus, to study this system, we need to know the algebraic structure of matrices $\{A_{ij}\}$. Observe that for $B_{ijk}=-(\Omega_{ij}+\Omega_{jk}+\Omega_{ki})$ and distinct indices $1\leqslant{} i,j,k,l,m\leqslant{} N$, we have
  \begin{equation}
    \label{eq:formation.control.algebraic.property}
    \begin{aligned}
      [A_{ij}, A_{jk}]   &= B_{ijk}, \\
      [B_{ijk}, A_{ij}]  &= 2(A_{ik}-A_{jk}), \\
      [B_{ijk}, A_{il}]  &= -A_{ij}+A_{jl}+A_{ik}-A_{kl}, \\
      [B_{ijk}, B_{ijl}] &= B_{ikl}+B_{jkl}, \\
      [B_{ijk}, B_{ilm}] &= B_{jlm}+B_{kml} = B_{lkj}+B_{mjk}. 
    \end{aligned}
  \end{equation}
  Therefore, the Lie algebra $\mathfrak{g}:=\lie\{A_{ij}\}$ has a decomposition,
  $\mathfrak{g}=\mathfrak{g}_1\oplus{}\mathfrak{g}_{-1}$, with
  $\mathfrak{g}_1=\mathrm{span}\,\{B_{ijk}\}$ and
  $\mathfrak{g}_{-1}=\mathrm{span}\,\{A_{ij}\}$. As a consequence, by the LARC, controllability of system~\cref{eq:dynamic.law.matrix} depends on whether the set $\Gamma:=\{A_{ij}: (i,j)\in{}E\}$ generates the Lie algebra $\mathfrak{g}$. Similar to bilinear systems on $\son$, we can adopt a graph-theoretic method for
  $\mathfrak{g}$ by associating \emph{one part of} $\mathfrak{g}$, i.e., $\mathfrak{g}_{-1}$, to a graph, which in the case of this example, \emph{coincides} with the graph on which the system is defined. To
  be more specific, for a complete graph $K_N$ and its set of edges $E$, we may define a map $\tau:\Gamma\to{}E$, which sends $A_{ij}\in\Gamma$ to $v_iv_j\in{}E$, so that the image of $\Gamma$ is exactly the graph $G$. Following the correspondence $\tau$, for two adjacent edges $v_iv_j$ and $v_jv_k$, since $\lie\{A_{ij},A_{jk}\} =\lie\{A_{ij},A_{jk},A_{ki}\} =\mathrm{span}\,\{A_{ij}, A_{jk}, A_{ki}, B_{ijk}\}$, the triangle
  with edges $v_iv_j, v_jv_k, v_kv_i$ in $G$ represents the Lie subalgebra spanned by $\{A_{ij}, A_{jk}, A_{ki}, B_{ijk}\}$. More generally, by the algebraic relations in \cref{eq:formation.control.algebraic.property}, any triangularly closed subgraph of $K_N$ is associated with a subalgebra of $\mathfrak{g}$. Therefore, the Lie (sub)algebra generated by $\Gamma$ can be represented by the triangular closure of $G$; and if $G$ is connected, then its triangular closure is complete, which
  suggests that $\lie(\Gamma)$ contains all $A_{ij}$'s, so we have $\lie(\Gamma)=\mathfrak{g}$. In conclusion, the controllability of system~\cref{eq:dynamic.law}, and equivalently, system~\cref{eq:dynamic.law.matrix}, is determined by graph connectivity of $G$.
\end{example}

By now, we have conducted a detailed investigation into controllability of bilinear systems on $\son$ governed by the standard basis elements of $\mathfrak{so}(n)$. Before we extend the scope of our investigation to general bilinear systems, we show that, in contrast to \cref{cor:control_number}, a driftless bilinear system on $\son$ can be controllable using only two control inputs, for all $n>0$.

\begin{example}
  Recall that by \cref{cor:control_number}, driftless bilinear systems
  on $\mathrm{SO}(n)$ with control vector fields in the standard basis
  of $\mathfrak{so}(n)$ require at least $n-1$ inputs to be
  controllable. However, this conclusion may not hold for general
  systems governed by vector fields not in the standard basis. For
  example, the following system with two control inputs
  \begin{equation}
    \label{eq:minimal.control.inputs}
    \dot{X}(t)=\bigl[u_1(t)C_1+u_2(t)C_2\bigr]X(t),\quad X(0)=I,
  \end{equation}
  where $C_1=\Omega_{12}$ and $C_2=\sum_{i=1}^{n-1}\Omega_{i,i+1}$, is
  controllable on $\son$.  To see this, we will show
  $\Omega_{1k}\in\lie(\{C_1,C_2\})$ for any
  $2\leqslant{}k\leqslant{}n$ by induction. At first, note that
  $\Omega_{13}=[C_1,C_2]\in\lie(\{C_1,C_2\})$.  Next, we assume
  $\Omega_{12}, \Omega_{13},\ldots,\Omega_{1k}\in\lie(\{C_1,C_2\})$
  for some $3\leqslant{}k<n$, which is the induction
  hypothesis. Consequently, we have
  $[\Omega_{1k},C_2]=\Omega_{2k}-\Omega_{1,k-1}+\Omega_{1,k+1}$ and
  $[\Omega_{1k},C_1]=\Omega_{2k}$, which implies
  $\Omega_{1,k+1}=[\Omega_{1k},C_2]-[\Omega_{1k},C_1]+\Omega_{1,k-1}
  \in\lie(\{C_1,C_2\})$. By induction, we conclude
  $\Omega_{1k}\in\lie(\{C_1,C_2\})$ for any
  $2\leqslant{}k\leqslant{}n$. This result implies
  $\lie(\Sigma)\subseteq\lie(\{C_1,C_2\})$, where
  $\Sigma=\{\Omega_{1k}:1\leqslant k\leqslant n\}$. Obviously
  $\tau(\Sigma)$ is a connected graph, and hence by
  \cref{thm:connectivity.controllability},
  $\lie(\Sigma)=\mathfrak{so}(n)$, and the the system in
  \cref{eq:minimal.control.inputs} is thus controllable.
\end{example}

\subsection{Equivalence Between the Symmetric Group and Graph-Theoretic Methods} 
\label{sec:Sn_graph}

In Sections~\ref{sec:Sn} and~\ref{sec:graph}, we developed two combinatorics-based methods to analyze controllability of bilinear systems. Both methods connect the Lie brackets of vector fields to operations on combinatorial objects. We will show next that an equivalence exists between the symmetric group and the graph-theoretic method when systems on $\son$ are concerned. We first illustrate this equivalence through a controllable system on $\mathrm{SO}(4)$. 

\begin{example}
  \label{ex:revisit.controllable.so4}
  Let us revisit the system in \cref{eq:controllable.so4} in \cref{ex:controllable}, governed by the set of
  vector fields $\Gamma=\{\Omega_{12},\Omega_{23},\Omega_{13},\Omega_{34}\}$. We have shown therein that this system is controllable on ${\rm SO}(4)$ by using the graph-theoretic method; and for the symmetric group method, we may choose
  $\Sigma_1=\{\Omega_{12},\Omega_{13},\Omega_{34}\}\subset\Gamma$ so that $\iota(\Sigma_1)=(1342)$ is a $4$-cycle. However, $\Sigma_1$ is not the only subset that is related to a $4$-cycle, and, for example, one can easily verify that $\iota{}$ also relates the subsets
  $\Sigma_2=\{\Omega_{13}, \Omega_{23}, \Omega_{34}\}$ and
  $\Sigma_3=\{\Omega_{12}, \Omega_{23}, \Omega_{34}\}$ to $4$-cycles
  as $\iota(\Sigma_2)=(13)(23)(34)=(1342)$ and
  $\iota(\Sigma_3)=(12)(23)(34)=(1234)$. Moreover, it is worth noting
  that $\Sigma_1$, $\Sigma_2$, and $\Sigma_3$ are the only subsets of
  $\Gamma$ that are related to $4$-cycles. Meanwhile, and more
  importantly, their graph representations $\tau(\Sigma_1)$,
  $\tau(\Sigma_2)$, and $\tau(\Sigma_3)$ coincide with all three
  spanning trees of the graph $\tau(\Gamma)$ associated with the
  system (see \cref{tab:permutation-graph.controllable}). On the other
  hand, from the aspect of Lie algebra, we observe that $\Sigma_i$ is
  a \emph{minimal} subset of $\Gamma$ generating $\lie(\Gamma)$ for
  each $i=1,2,3$, that is, $\Sigma'=\Sigma_i$ for any
  $\Sigma'\subseteq\Sigma_i$ satisfying
  $\lie(\Sigma')=\lie(\Gamma)$. This observation sheds light on the
  general result: given a system on $\son$ governed by the set of
  vector fields $\Gamma$, if $\Sigma$ is a minimal subset of $\Gamma$
  with $\lie(\Sigma)=\lie(\Gamma)$, then $\iota(\Sigma)$ is an
  $n$-cycle if and only if $\tau(\Sigma)$ is a spanning tree of
  $\tau(\Gamma)$.
\end{example}

\begin{table}[tbp]
  \centering
  \begin{tabularx}{\textwidth}{lYl}
    \toprule
    Set of control vector fields
      & Graph
        & Permutation in $S_4$ \\
    \midrule
    $\Gamma=\{\Omega_{12},\Omega_{23},\Omega_{13},\Omega_{34}\}$
      & \begin{tikzpicture}[baseline={4mm},thick]
          \node[left] at (0,1) {$v_1$};
          \node[right] at (1,1) {$v_2$};
          \node[right] at (1,0) {$v_3$};
          \node[left] at (0,0) {$v_4$};
          
          \draw (0,1) -- (1,1);
          \draw (1,1) node[vertex]{} -- (1,0);
          \draw (1,0) -- (0,1) node[vertex]{};
          \draw (1,0) node[vertex]{} -- (0,0) node[vertex]{};
        \end{tikzpicture}
        & $\iota(\Gamma)=(12)(23)(13)(34)=(234)$ \\
    \midrule
    $\Sigma_1=\{\Omega_{12},\Omega_{13},\Omega_{34}\}$
      & \begin{tikzpicture}[baseline={4mm},thick]
          \node[left] at (0,1) {$v_1$};
          \node[right] at (1,1) {$v_2$};
          \node[right] at (1,0) {$v_3$};
          \node[left] at (0,0) {$v_4$};
          
          \draw (0,1) -- (1,1) node[vertex]{};
          \draw (1,0) -- (0,1) node[vertex]{};
          \draw (1,0) node[vertex]{} -- (0,0) node[vertex]{};
        \end{tikzpicture}
        & $\iota(\Sigma_1)=(12)(13)(34)=(1342)$ \\
    \midrule
    $\Sigma_2=\{\Omega_{13},\Omega_{23},\Omega_{34}\}$
      & \begin{tikzpicture}[baseline={4mm},thick]
          \node[left] at (0,1) {$v_1$};
          \node[right] at (1,1) {$v_2$};
          \node[right] at (1,0) {$v_3$};
          \node[left] at (0,0) {$v_4$};
          
          \draw (0,1) node[vertex]{} -- (1,0);
          \draw (1,1) node[vertex]{} -- (1,0);
          \draw (0,0) node[vertex]{} -- (1,0) node[vertex]{};
        \end{tikzpicture}
        & $\iota(\Sigma_2)=(13)(23)(34)=(1342)$ \\
    \midrule
    $\Sigma_3=\{\Omega_{12},\Omega_{23},\Omega_{34}\}$
      & \begin{tikzpicture}[baseline={4mm},thick]
          \node[left] at (0,1) {$v_1$};
          \node[right] at (1,1) {$v_2$};
          \node[right] at (1,0) {$v_3$};
          \node[left] at (0,0) {$v_4$};
          
          \draw (0,1) node[vertex]{} -- (1,1);
          \draw (1,1) node[vertex]{} -- (1,0);
          \draw (1,0) node[vertex]{} -- (0,0) node[vertex]{};
        \end{tikzpicture}
        & $\iota(\Sigma_3)=(12)(23)(34)=(1234)$ \\
    \bottomrule
  \end{tabularx}
  \caption{A comparison between two methods analyzing controllability:
    the symmetric groups method and the graph-theoretic method. Note
    that the graphs associated with $\Sigma_1$, $\Sigma_2$ and
    $\Sigma_3$ are \emph{spanning trees} of the associated graph of
    $\Gamma$, and that any tree is related to a $4$-cycle in the
    symmetric group $S_4$.}
  \label{tab:permutation-graph.controllable}
\end{table}

\begin{theorem}
  \label{thm:spanning.tree}
  Consider a bilinear system on $\son$ as in \cref{eq:system_SOn} and
  let $\Gamma\subseteq\mathcal{B}$ denote the set of vector fields
  governing the system dynamics.  Suppose $\Sigma\subseteq\Gamma$ is a
  \emph{minimal} subset such that $\iota(\Sigma)=\sigma\in S_n$ is an
  $n$-cycle (i.e., $\Sigma$ has no proper subset that is also related
  to an $n$-cycle via $\iota$), then its associated graph
  $\tau(\Sigma)$ is a \emph{spanning tree} of $\tau(\Gamma)$, and the
  system is therefore controllable.  Conversely, for a controllable
  system, any spanning tree $T$ of the connected graph $\tau(\Gamma)$
  corresponds to a subset $\Sigma'=\tau^{-1}(T)$, such that
  $\Sigma'\subseteq\Gamma$ is minimal and that $\iota(\Sigma')$ is an
  $n$-cycle in $S_n$.
  %
\end{theorem}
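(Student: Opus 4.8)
The plan is to isolate a single combinatorial fact linking products of transpositions to spanning trees, and then to derive both implications by combining that fact with \cref{thm:SOn_Sn,thm:connectivity.controllability} and the bijectivity of $\tau$ from \cref{prop:property_tau}. The crucial lemma I would prove first is: if $n-1$ transpositions on $\{1,\dots,n\}$ have edge set forming a spanning tree of $K_n$, then their product \emph{in any order} is an $n$-cycle. A transparent way to establish this is by cycle counting. Multiplying a permutation by a transposition $(ab)$ changes its number of cycles by $\pm 1$, decreasing it precisely when $a$ and $b$ lie in different cycles. Since the edges of a tree, inserted in any order, never close a cycle, each partial product keeps the endpoints of the next edge in distinct orbits of the group generated so far, hence in distinct cycles, so every factor merges two cycles and the cycle count drops from $n$ to $1$. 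This simultaneously resolves the ambiguity flagged in \cref{rmk:iota.not.a.map}: for a tree, $\iota$ yields an $n$-cycle independently of the ordering of its elements. (A leaf-peeling induction on $n$, splicing the leaf vertex into the $(n-1)$-cycle produced by the remaining edges, gives the same conclusion and is good motivation.)

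For the forward implication, I would start from a minimal $\Sigma\subseteq\Gamma$ with $\iota(\Sigma)$ an $n$-cycle. Since an $n$-cycle lies in the group generated by the transpositions of $\Sigma$ and acts transitively on $\{1,\dots,n\}$, that group is transitive, which is equivalent to $\tau(\Sigma)$ being connected and spanning every vertex. A connected graph contains a spanning tree $T\subseteq\tau(\Sigma)$, and by the lemma $\tau^{-1}(T)\subseteq\Sigma$ has $\iota$-image an $n$-cycle. Minimality of $\Sigma$ then forces $T=\tau(\Sigma)$, so $\tau(\Sigma)$ is itself a tree on all $n$ vertices, i.e.\ a spanning tree of $\tau(\Gamma)$; controllability follows because $\tau(\Gamma)\supseteq\tau(\Sigma)$ is connected, invoking \cref{thm:connectivity.controllability}.

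For the converse, controllability gives connectivity of $\tau(\Gamma)$ via \cref{thm:connectivity.controllability}, so a spanning tree $T$ exists; set $\Sigma'=\tau^{-1}(T)$, which is well defined and contained in $\Gamma$ by the bijectivity of $\tau$ in \cref{prop:property_tau}. The lemma makes $\iota(\Sigma')$ an $n$-cycle, and minimality is immediate: $T$ has exactly $n-1$ edges, so any proper subset supplies at most $n-2$ transpositions, which cannot express an $n$-cycle (whose transposition length is $n-1$).

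I expect the main obstacle to be the combinatorial lemma, and specifically the order-independence claim inside it. The delicate point is justifying that the two endpoints of a newly added tree edge always sit in \emph{different} cycles of the current partial product; the correct bridge is that being in the same cycle of a product implies being in the same orbit of the generated group, which implies being in the same connected component, so acyclicity of the tree forces distinct components and hence distinct cycles at every step. Making this orbit-versus-cycle distinction precise, rather than conflating the cycles of the specific ordered product with the components of the underlying graph, is where the argument must be handled with care.
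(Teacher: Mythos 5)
Your proposal is correct, and it reaches the same structural endpoints as the paper (forward direction via minimality plus a tree-to-$n$-cycle lemma; converse via connectivity, a spanning tree, and the edge count $n-1$ forcing minimality), but the proof of the central combinatorial claim is genuinely different. The paper proves its Claim by a leaf-peeling induction: it detaches a leaf edge $v_1v_{l+1}$ from the tree, uses the explicit commutation law \cref{eq:commutation.law} to push the corresponding transposition $\rho$ to the end of the word as some $(j_p,l+1)$, and then splices $l+1$ into the $l$-cycle supplied by the induction hypothesis. You instead count cycles: each transposition changes the cycle number by $\pm1$, and acyclicity of the tree guarantees that the endpoints of every newly inserted edge lie in distinct components of the previously inserted edges, hence in distinct orbits of the generated group, hence in distinct cycles of the partial product, so the count falls monotonically from $n$ to $1$. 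Both arguments are valid and both establish order-independence, which is the delicate point flagged in \cref{rmk:iota.not.a.map}; your orbit-versus-cycle bridge is exactly the right justification and is arguably cleaner than tracking explicit commutators. A further small improvement on your side is the forward direction: you derive connectivity of $\tau(\Sigma)$ from transitivity of the group containing the $n$-cycle and then invoke minimality to collapse $\tau(\Sigma)$ onto a spanning tree, whereas the paper passes through the assertion that a graph with $n$ vertices and $n-1$ edges is connected and acyclic, which as stated needs the transitivity input to be airtight. What the paper's route buys is an explicit formula for how the $n$-cycle is assembled (useful for the later extension to forests in \cref{cor:spanning.tree.uncontrollable}); what yours buys is brevity and a self-contained resolution of the ordering ambiguity.
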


\begin{proof}
  From group theory we know that a \emph{minimal} $\Sigma$ with
  $\iota{}(\Sigma)$ being an $n$-cycle should consist of $n-1$
  transpositions, and that the union of the orbits of all $n-1$
  transpositions is the orbit of $\sigma$. This means the graph
  $\tau(\Sigma)$ has $n$ vertices and $n-1$ edges. Since a graph with
  $n$ vertices and $n-1$ edges is both \emph{connected} and
  \emph{acyclic}, and since $\tau(\Sigma)$ covers all $n$ vertices of
  $\tau(\Gamma)$, we conclude that $\tau(\Sigma)$ is the spanning tree
  of $\tau(\Gamma)$.

  On the other hand, for a subset $\Sigma'\subseteq\Gamma$ satisfying that $\tau(\Sigma')$ is a spanning tree of $\tau(\Gamma)$, we must have $|\Sigma'|=n-1$. Since a decomposition of an $n$-cycle needs at least $n-1$ transpositions, if $\iota(\Sigma')$ is an $n$-cycle, then $\Sigma'$ is obviously minimal. The following claim shows that $\iota(\Sigma')$ is indeed an $n$-cycle, regardless of the ordering of elements in $\Sigma'$.

  \begin{claim}
    A tree consisting of $k$ edges in the connected graph
    $\tau(\Gamma)$ in \cref{thm:spanning.tree} is related to a
    $(k+1)$-cycle via $\iota$, regardless of the ordering of
    transpositions.
  \end{claim}

  \begin{claimproof}
    Let us consider a tree $T$ with $k$ edges in $\tau(\Gamma)$, and
    prove the claim by induction. It is trivial for $k=1$; and for
    $k=2$, say $T=v_{j_1}v_{j_2}v_{j_3}$, then $\iota$ sends $T$ to
    either $(j_1j_2j_3)$ or $(j_1j_3j_2)$, depending on the orderings
    of $(j_1j_2)$ and $(j_2j_3)$. Assume the claim is true for $k=l-1$
    for some $l\in\mathbb{Z}_{+}$; and for a tree $T$ with $k=l$
    edges, we can choose a subtree $T'$ of $T$ that consists of $l-1$
    edges. Without loss of generality, we may assume $T'$ has vertices
    $\{v_1, \ldots, v_l\}$ and that $T$ has an additional vertex
    $v_{l+1}$ and an additional edge $v_1v_{l+1}$.  Let
    $\{\sigma_1, \ldots, \sigma_{l-1}\}$ be a set of transpositions
    such that each $\sigma_i$ is related to a distinct edge in $T'$ by
    $\iota$, and let $\rho=(1,l+1)$ denote the transposition related
    to the additional edge $v_1v_{l+1}$ in $T$. Our goal is to show
    that the permutation
    \begin{equation}
      \label{eq:permutation}
      \sigma_{i_1}\cdots\sigma_{i_t}\rho\sigma_{i_{t+1}}\cdots\sigma_{i_{l-1}}
    \end{equation}
    is an $(l+1)$-cycle. Note that for any
    $1\leqslant{} i,j_1,j_2\leqslant{} l$, we have the following law
    of commutation:
    \begin{equation}
      \label{eq:commutation.law}
      (i,l+1)(j_1j_2)=
      \begin{cases}
        (j_1j_2)(i,l+1) & \text{if neither $j_1$ or $j_2$ equals to $i$;} \\
        (j_1j_2)(j_2,l+1) & \text{if $j_1=i$.}
      \end{cases}
    \end{equation}
    Therefore, we can rewrite the permutation~\cref{eq:permutation} as
    $\sigma_{i_1}\cdots\sigma_{i_{l-1}}\rho'$, where $\rho'=(j_p,l+1)$
    for some $1\leqslant{} j_p\leqslant{} l$. By our assumption,
    $\sigma_{i_1}\cdots{}\sigma_{i_{l-1}}$ is an $l$-cycle:
    $\sigma_{i_1}\cdots{}\sigma_{i_{l-1}}=(j_1j_2\cdots{}j_l)$, so
    finally we have
    \[
      \begin{aligned}
        \sigma_{i_1}\cdots\sigma_{i_t}\rho\sigma_{i_{t+1}}\cdots\sigma_{i_{l-1}}
        &=\sigma_{i_1}\cdots{}\sigma_{i_{l-1}}\rho'
        =(j_1j_2\cdots{}j_l)(j_p,l+1) \\
        &=(j_1\cdots{} j_p,l+1,j_{p+1}\cdots{} j_l),
      \end{aligned}
    \]
    which is an $(l+1)$-cycle. It is clear that the ordering of
    transpositions is irrelevant in our proof.
  \end{claimproof}
  
  Therefore, a spanning tree
  $\tau(\Sigma')$ consisting of $n-1$ edges is related to an $n$-cycle
  in $S_n$ via $\iota$, which finishes our proof.
\end{proof}

Given a controllable system on $\son$, \cref{thm:spanning.tree} reveals the relation between $n$-cycles and spanning trees of the associated graph. In particular, for such a system governed by the set $\Gamma\subseteq\mathcal{B}$ of vector fields, this theorem supplements \cref{thm:connectivity.controllability} by explicitly describing the subsets of $\Gamma$ that are related to $n$-cycles using graphs. The following corollary then summarizes all the symmetric group and graph-theoretic characterizations of controllability for systems on $\son$.
  
\begin{corollary}
  \label{cor:controllability.summary}
  Consider a bilinear system defined on $\son$ as in
  \cref{eq:system_SOn}, and let $\Gamma$ denote the set of vector
  fields governing the system dynamics. The following are equivalent:
  \begin{enumerate}[font=\normalfont,label={(\arabic*)}]%
    \item The system is controllable on $\son$.
    \item $\tau(\Gamma)$ is a connected graph.
    \item For any minimal subset $\Sigma\subseteq\Gamma$ generating
      $\mathfrak{so}(n)$, $\iota(\Sigma)$ is an $n$-cycle and
      $\tau(\Sigma)$ is a spanning tree of $\tau(\Gamma)$.
  \end{enumerate}
\end{corollary}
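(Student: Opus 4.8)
The plan is to treat this corollary as a synthesis of the three characterization theorems already proved, rather than as a result requiring new arguments. Since \cref{thm:connectivity.controllability} states precisely that the system is controllable on $\son$ if and only if $\tau(\Gamma)$ is connected, the equivalence $(1)\Leftrightarrow(2)$ is immediate. It therefore suffices to establish $(1)\Leftrightarrow(3)$, which I would do by splitting into the two implications and invoking \cref{thm:SOn_Sn,thm:spanning.tree}.

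For $(1)\Rightarrow(3)$, I would start from controllability, which by the LARC (\cref{thm:LARC}) gives $\lie(\Gamma)=\mathfrak{so}(n)$, so that minimal generating subsets exist. Fixing any minimal $\Sigma\subseteq\Gamma$ with $\lie(\Sigma)=\mathfrak{so}(n)$, I would first observe that Lie-algebraic minimality forces $\Sigma$ to contain no redundant elements, which is exactly the hypothesis of the necessity argument in \cref{thm:SOn_Sn}; that argument then yields that $\iota(\Sigma)$ is an $n$-cycle and $|\Sigma|=n-1$. Feeding this into \cref{thm:spanning.tree} gives that $\tau(\Sigma)$ is a spanning tree of $\tau(\Gamma)$, which completes condition $(3)$. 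For $(3)\Rightarrow(1)$, I would use that condition $(3)$ furnishes a minimal subset $\Sigma$ with $\iota(\Sigma)$ an $n$-cycle, whereupon the sufficiency direction of \cref{thm:SOn_Sn} immediately delivers controllability.

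The main obstacle I anticipate is not any single computation but rather reconciling the three distinct notions of minimality that appear across the cited results: minimality of a generating set (no proper subset generates $\mathfrak{so}(n)$), freedom from redundant elements (no element lies in the Lie algebra of the others), and minimality with respect to producing an $n$-cycle (no proper subset maps to an $n$-cycle under $\iota$). I would need to verify that a Lie-minimal generating subset automatically has no redundant elements, and that once $|\Sigma|=n-1$ with $\iota(\Sigma)$ an $n$-cycle is established, minimality with respect to $n$-cycles is automatic, since any decomposition of an $n$-cycle requires at least $n-1$ transpositions and hence no proper subset of $\Sigma$ can map to an $n$-cycle.

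A secondary point of care is the universal quantifier in $(3)$. On the one hand I must ensure the conclusion holds for \emph{every} minimal generating subset, not merely one; this is handled because the necessity argument of \cref{thm:SOn_Sn} and \cref{thm:spanning.tree} apply verbatim to an arbitrary such $\Sigma$. On the other hand, when proving $(3)\Rightarrow(1)$ I would guard against a vacuous reading of $(3)$ for uncontrollable systems by relying on condition $(3)$'s implicit assertion that a minimal generating subset exists, so that a genuine $n$-cycle is available to trigger the sufficiency direction.
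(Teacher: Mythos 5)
Your proposal is correct and follows essentially the same route as the paper, which states this corollary without an explicit proof, intending it as a direct synthesis of \cref{thm:connectivity.controllability}, \cref{thm:SOn_Sn}, and \cref{thm:spanning.tree}. Your additional care in reconciling the three notions of minimality (minimal generating set, no redundant elements, minimal with respect to producing an $n$-cycle) and in guarding against a vacuous reading of condition (3) addresses genuine imprecisions that the paper leaves implicit.
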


In the remainder of this section, we will focus on uncontrollable systems. Recall \cref{thm:controllable.submanifold} that the controllable submanifold for an uncontrollable system on $\son$ is determined by the connected components of its associated graph. Meanwhile, according to \cref{cor:Sn_submanifold}, by applying the method of symmetric groups, the controllable submanifold can also be characterized by the nontrivial orbits of $\iota(\Xi)$ for a minimal subset $\Xi\subseteq\Gamma$ generating $\lie(\Gamma)$. To see that the two methods are equivalent and to extend \cref{thm:spanning.tree} to uncontrollable cases, we first introduce the concept of \emph{spanning forests}, which generalizes the notion of spanning trees to disconnected graphs. Given a (disconnected) graph, its \emph{spanning forest} is a maximal acyclic subgraph, or equivalently, a subgraph consisting of a spanning tree in each connected component of the graph \cite{Bollobas98}. Following this definition, we will show that the minimal subset $\Xi\subseteq\Gamma$ in \cref{cor:Sn_submanifold} corresponds to a spanning forest of $\tau(\Gamma)$, so that the controllable submanifold can also be equivalently described by the connected components of the spanning forest. This result is illuminated in the following example.

\begin{example}
  Consider a bilinear system on $\mathrm{SO}(6)$ in the form of \cref{eq:system_SOn} governed by the set of vector fields $\Gamma=\{\Omega_{12}, \Omega_{14}, \Omega_{23}, \Omega_{24},\Omega_{34}, \Omega_{56}\}$.  As shown in \cref{tab:permutation-graph.uncontrollable}, $\iota(\Gamma)$ is
  disconnected with two components, and hence this system is not controllable on ${\rm SO}(6)$. To describe its controllable submanifold, we choose a spanning forest $\tau(\Xi_1)$ of the associated graph $\tau(\Gamma)$ with $\Xi_1=\{\Omega_{14}, \Omega_{24}, \Omega_{34}, \Omega_{56}\}$. Note that the permutation $\iota(\Xi_1)=(14)(24)(34)(56)=(1432)(56)$ has two nontrivial orbits: $\mathcal{O}_1=\{1,2,3,4\}$ and
  $\mathcal{O}_2=\{5,6\}$, each corresponds to a connected component of the graph $\tau(\Gamma)$, or equivalently, a summand in the decomposition of the Lie algebra of the controllable submanifold:
  \[
    \lie(\Gamma)={\rm
      span}\,\{\Omega_{ij}:i,j\in\mathcal{O}_{1}\}\oplus{\rm
      span}\,\{\Omega_{ij}:i,j\in\mathcal{O}_{2}\}.
  \]

  Now suppose we choose a different spanning forest $\tau(\Xi_2)$
  which corresponds to another subset
  $\Xi_2=\{\Omega_{12}, \Omega_{24}, \Omega_{34},
  \Omega_{56}\}\subseteq\Gamma$. Note that the permutation
  $\iota(\Xi_2)=(1243)(56)$ is different from $\iota{}(\Xi_1)$, but
  both have the \emph{same} orbits. The graphs and permutations
  associated with $\Gamma$ and its subsets $\Xi_1$ and $\Xi_2$ are
  also listed in \cref{tab:permutation-graph.uncontrollable}.
\end{example}

In general, for a spanning forest $F$ of $\tau(\Gamma)$, we know by \cref{thm:spanning.tree} that each tree $T_i$ consisting of $n_i$ vertices in $F$ is related to an $n_i$-cycle via $\iota$, which characterizes a summand of the decomposition of $\lie(\Gamma)$. So by applying \cref{thm:spanning.tree} to each (maximal) tree in the forest $F$, we have the following \cref{cor:spanning.tree.uncontrollable}, which describes the relation between the associated graphs and permutations for an uncontrollable bilinear system.

\begin{table}[tbp]
  \centering
  \begin{tabularx}{\textwidth}{lYl}
    \toprule
    Set of control vector fields
      & Graph
        & \makecell[l]{Permutation in $S_6$ and \\
          its nontrivial orbits} \\
    \midrule
    \(\Gamma=\Bigl\{
      \begin{array}{l}
        \Omega_{12}, \Omega_{14},\Omega_{23}, \\
        \Omega_{24}, \Omega_{34},\Omega_{56}
      \end{array}\Bigr\}\)
      & \begin{tikzpicture}[baseline={-1ex},scale=1.0,thick]
          \node[left] at (180:\R) {$v_1$};
          \node[above left] at (120:\R) {$v_2$};
          \node[above right] at (60:\R) {$v_3$};
          \node[right] at (0:\R) {$v_4$};
          \node[below right] at (300:\R) {$v_5$};
          \node[below left] at (240:\R) {$v_6$};
          
          \draw (180:\R) -- (120:\R);
          \draw (180:\R) node[vertex]{} -- (0:\R);
          \draw (120:\R) -- (60:\R);
          \draw (120:\R) node[vertex]{} -- (0:\R);
          \draw (0:\R) node[vertex]{} -- (60:\R) node[vertex]{};
          \draw (240:\R) node[vertex]{} -- (300:\R) node[vertex]{};
        \end{tikzpicture}
        & \makecell[l]{
          \(\begin{aligned}
            \iota(\Gamma) &=(12)(14)(23)(24)(34)(56) \\
                          &=(14)(56)
            \end{aligned}\) \\
          \gape[t]{$\mbox{Orbits}=\{1,4\}, \{5,6\}$}} \\
    \midrule
    $\Xi_1=\{\Omega_{14},\Omega_{24},\Omega_{34},\Omega_{56}\}$
      & \begin{tikzpicture}[baseline={-1ex},scale=1.0,thick]
          \node[left] at (180:\R) {$v_1$};
          \node[above left] at (120:\R) {$v_2$};
          \node[above right] at (60:\R) {$v_3$};
          \node[right] at (0:\R) {$v_4$};
          \node[below right] at (300:\R) {$v_5$};
          \node[below left] at (240:\R) {$v_6$};
          
          \draw (180:\R) node[vertex]{} -- (0:\R);
          \draw (60:\R) node[vertex]{} -- (0:\R);
          \draw (120:\R) node[vertex]{} -- (0:\R) node[vertex]{};
          \draw (240:\R) node[vertex]{} -- (300:\R) node[vertex]{};
        \end{tikzpicture}
        & \makecell[l]{
          \(\begin{aligned}
            \iota(\Xi_1) &=(14)(24)(34)(56) \\
                         &=(1432)(56)
          \end{aligned}\) \\
          \gape[t]{$\mbox{Orbits}=\{1,2,3,4\}, \{5,6\}$}} \\
    \midrule
    $\Xi_2=\{\Omega_{12},\Omega_{24},\Omega_{34},\Omega_{56}\}$
      & \begin{tikzpicture}[baseline={-1ex},scale=1.0,thick]
            \node[left] at (180:\R) {$v_1$};
            \node[above left] at (120:\R) {$v_2$};
            \node[above right] at (60:\R) {$v_3$};
            \node[right] at (0:\R) {$v_4$};
            \node[below right] at (300:\R) {$v_5$};
            \node[below left] at (240:\R) {$v_6$};

            \draw (180:\R) node[vertex]{} -- (120:\R);
            \draw (120:\R) node[vertex]{} -- (0:\R);
            \draw (0:\R) node[vertex]{} -- (60:\R) node[vertex]{};
            \draw (240:\R) node[vertex]{} -- (300:\R) node[vertex]{};
          \end{tikzpicture}
        & \makecell[l]{
          \(\begin{aligned}
            \iota(\Xi_2) &=(12)(24)(34)(56) \\
                         &=(1243)(56)
          \end{aligned}\) \\
          \gape[t]{$\mbox{Orbits}=\{1,2,3,4\}, \{5,6\}$}} \\
    \bottomrule
  \end{tabularx}
  \caption{A comparison between the symmetric group method and the
    graph-theoretic method for an uncontrollable system on
    $\mathrm{SO}(6)$. Both graphs associated with subsets $\Xi_1$ and
    $\Xi_2$ are \emph{spanning forest} of the associated graph of
    $\Gamma$.}
  \label{tab:permutation-graph.uncontrollable}
\end{table}

\begin{corollary}
  \label{cor:spanning.tree.uncontrollable}
  Given an uncontrollable bilinear system defined on $\son$ in the form of \cref{eq:system_SOn} governed by the set of vector fields $\Gamma$. Let $F$ be a spanning forest of $\tau(\Gamma)$ and if we denote $\Xi=\tau^{-1}(F)$, then $\Xi$ is a minimal subset of $\Gamma$ with the same generating Lie algebra and the controllable submanifold of the system is determined by the nontrivial orbits of $\iota(\Xi)$.
\end{corollary}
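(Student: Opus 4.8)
The plan is to reduce the disconnected (uncontrollable) case to the connected one by applying \cref{thm:spanning.tree} separately to each connected component of $\tau(\Gamma)$, and then to assemble the pieces using \cref{cor:Sn_submanifold}. First I would set $G=\tau(\Gamma)$ and list the connected components of $G$ that contain at least one edge as $G_1,\dots,G_l$, with vertex sets $\mathcal{O}_1,\dots,\mathcal{O}_l$ and $|\mathcal{O}_k|=n_k$ (the remaining isolated vertices correspond to indices absent from every $\Omega_{ij}$ and contribute only trivial orbits). By the definition of a spanning forest, $F$ restricts to a spanning tree $T_k$ of $G_k$ for each $k$, so $\Xi=\tau^{-1}(F)$ splits as a disjoint union $\Xi=\bigcup_{k=1}^{l}\Xi_k$ with $\Xi_k=\tau^{-1}(T_k)$ and $|\Xi_k|=n_k-1$.

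Next I would establish that $\Xi$ is a minimal subset of $\Gamma$ with $\lie(\Xi)=\lie(\Gamma)$. Because $F$ and $G$ induce the same partition of the vertices into connected components, \cref{lem:tri.closure.complete.components} shows that their triangular closures $\bar F$ and $\bar G$ are both the disjoint union of the complete graphs on $\mathcal{O}_1,\dots,\mathcal{O}_l$; hence $\bar F=\bar G$, and \cref{cor:lie-graph} gives $\lie(\Xi)=\mathrm{span}\,(\tau^{-1}(\bar F))=\mathrm{span}\,(\tau^{-1}(\bar G))=\lie(\Gamma)$. Minimality follows because a spanning forest is a maximal acyclic subgraph: deleting any edge of $F$ splits one of the trees $T_k$ into two pieces, so its triangular closure degenerates from $K_{n_k}$ to a disjoint union of two strictly smaller complete graphs, whence $\lie(\tau^{-1}(F'))\subsetneq\lie(\Gamma)$ for every proper subforest $F'$.

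For the controllable-submanifold statement, I would invoke the Claim inside the proof of \cref{thm:spanning.tree} for each tree $T_k$: the $n_k-1$ transpositions in $\Xi_k$ multiply to an $n_k$-cycle $\sigma_k=\iota(\Xi_k)$ whose orbit is exactly $\mathcal{O}_k$, independently of the ordering of the elements of $\Xi_k$. Since the trees $T_k$ are vertex-disjoint, the supports $\mathcal{O}_k$ are pairwise disjoint, so transpositions coming from different trees commute, and $\iota(\Xi)=\sigma_1\sigma_2\cdots\sigma_l$ is a product of pairwise disjoint cycles whose nontrivial orbits are precisely $\mathcal{O}_1,\dots,\mathcal{O}_l$, regardless of the global ordering of $\Xi$. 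With this cycle decomposition in hand, \cref{cor:Sn_submanifold} identifies the controllable submanifold as the Lie subgroup of $\son$ with Lie algebra $\bigoplus_{k=1}^{l}\mathrm{span}\,\{\Omega_{ij}:i,j\in\mathcal{O}_k\}$, which is exactly determined by the nontrivial orbits of $\iota(\Xi)$.

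The main obstacle is the bookkeeping in the last step: because $\iota$ is order-dependent (\cref{rmk:iota.not.a.map}), I must verify that the \emph{orbit} decomposition of $\iota(\Xi)$ — rather than the specific cycles produced — is an invariant of $\Xi$. The Claim from \cref{thm:spanning.tree} secures this invariance inside each tree, while vertex-disjointness secures commutation across trees; together they guarantee that the nontrivial orbits of $\iota(\Xi)$ coincide with the vertex sets of the connected components of $\tau(\Gamma)$ no matter how the transpositions are ordered, which is precisely what is needed to align the symmetric-group description with the graph-theoretic one in \cref{thm:controllable.submanifold}.
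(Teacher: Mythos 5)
Your proposal is correct and follows essentially the same route as the paper's proof: decompose the spanning forest into its component trees, apply \cref{thm:spanning.tree} (and its Claim) to each tree to obtain disjoint cycles whose orbits are the component vertex sets, and assemble the controllable submanifold via \cref{cor:Sn_submanifold}. You in fact supply more detail than the paper does on two points it leaves implicit --- the verification that $\lie(\Xi)=\lie(\Gamma)$ via equality of triangular closures, and the minimality of $\Xi$ --- both of which are handled correctly.
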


\begin{proof}
  For a spanning forest $F$ of $\tau(\Gamma)$, let $T_1, \ldots, T_l$
  be (maximal) trees in $F$ s.t.\ $F=T_1\sqcup\cdots\sqcup{} T_l$,
  where $T_i=(V_i, E_i)$ with $|V_i|=n_i$, $|E_i|=n_i-1$. By
  \cref{thm:spanning.tree}, each $T_i$ is related to an $n_i$-cycle
  $\iota(\Xi_i)\in{}S_{n_i}$ for $\Xi_i=\tau^{-1}(T_i)$, and the orbit
  of $\iota(\Xi_i)$ determines the Lie (sub)algebra
  $\mathfrak{g}_i:=\lie(\Xi_i)$. Therefore, distinct orbits of
  $\iota(\Xi)$ consist of the orbits of each
  $\iota(\Xi_1), \ldots, \iota(\Xi_l)$, which determines the Lie
  algebra generated by $\Gamma$:
  $\lie(\Gamma)=\lie(\Xi)=\mathfrak{g}_i\oplus\cdots\oplus{}
  \mathfrak{g}_l$. Since the controllable submanifold is determined by
  the Lie subalgebra $\lie(\Gamma)=\lie(\Xi)$, we conclude that it is
  also determined by distinct orbits of $\iota(\Xi)$, for
  $\Xi=\tau^{-1}(F)$.
\end{proof}


\section{Combinatorics-Based Controllability Analysis via Lie Algebra Decompositions} 
\label{sec:non-standard}
 
Utilizing the algebraic structure of $\mathfrak{so}(n)$, we have developed combinatorial methods that identified vector fields in the standard basis of $\mathfrak{so}(n)$, as well as vector fields generating structured Lie algebras, e.g., the multi-agent system described in \cref{ex:formation.control}, with transpositions in $S_n$ and edges of $n$-vertices graphs. It was also shown that such identifications lead to an equivalence between the two methods for analyzing controllability of systems on $\son$ as defined in \cref{eq:system_SOn}.

However, in many cases, the system Lie algebra may be too complicated to associate each of its elements to a permutation or a graph edge, so that the combinatorial methods cannot be directly applied. This dilemma can be resolved through the decomposition of the Lie algebra into components with simpler algebraic structures such that the combinatorial methods can be applied to each component. This idea allows us to generalize the combinatorial framework to bilinear systems defined on boarder classes of Lie groups. To this end, we adopt techniques in representation theory, including the Cartan and non-intertwining decomposition. Some basics of representation theory can be found in \cref{appd:representation}.


\subsection{Cartan Decomposition in Symmetric Group Method} \label{sec:Cartan}

The Cartan decomposition, named after the influential French mathematician \'{E}lie Cartan, provides a major tool for understanding the algebraic structures of semisimple Lie groups and Lie algebras. Its generalized form, the root space decomposition, decomposes a Lie algebra into a direct sum of vector subspaces, called the root spaces, as introduced in \cref{appd:representation}. However, each root space is not necessarily a Lie subalgebra, i.e., Lie bracket
operations may not be closed in the root spaces. This nature of the
Cartan (root space) decomposition then disables the use of the graph-theoretic method since it violates the ``triangle rule'' shown in \cref{lem:lie-graph} (ii). 
As a result, here we pursue and generalize the symmetric group method to analyze controllability of systems with its vector fields living in the root spaces of semisimple Lie algebras.

In representation theory, the Lie algebra
$\mathfrak{sl}(3,\mathbb{C})$, which consists of $3\times{} 3$ complex
matrices with vanishing trace, serves as a primary example to illustrate the Cartan decomposition of semisimple Lie
algebras. Therefore, to illustrate our idea, we consider the driftless bilinear system evolving on the Lie group ${\rm SL}(3,\mathbb{C})$ consisting of
$3\times 3$ complex matrices with determinant~$1$, given by 
\begin{equation}
  \label{eq:sl3}
  \dot{Z}(t)=\Bigl(\sum_{j=1}^m{} u_j(t)B_j\Bigr)Z(t),
\end{equation}
where the state $Z(t)\in {\rm SL}(3,\mathbb{C})$, the control vector fields $B_j\in\Gamma\subseteq\mathcal{B}'':=\{H_k,X_l,Y_l: k=1,2; l=1,2,3\}$, the basis of
$\mathfrak{sl}(3,\mathbb{C})$ with
\begingroup
\allowdisplaybreaks
\begin{align*}
  H_1 &=\begin{bmatrix}
         1 & 0 & 0 \\ 0 & -1 & 0 \\ 0 & 0 & 0 
       \end{bmatrix}, &
  H_2 &=\begin{bmatrix}
         0 & 0 & 0 \\ 0 & 1 & 0 \\ 0 & 0 & -1
      \end{bmatrix}, & \\
  X_1 &=\begin{bmatrix}
         0 & 1 & 0 \\ 0 & 0 & 0 \\ 0 & 0 & 0
        \end{bmatrix}, &
  X_2 &=\begin{bmatrix}
         0 & 0 & 0 \\ 0 & 0 & 1 \\ 0 & 0 & 0
       \end{bmatrix}, &
  X_3 &=\begin{bmatrix}
         0 & 0 & 1 \\ 0 & 0 & 0 \\ 0 & 0 & 0 
       \end{bmatrix}, \\
  Y_1 &=\begin{bmatrix}
         0 & 0 & 0 \\ 1 & 0 & 0 \\ 0 & 0 & 0
       \end{bmatrix}, &
  Y_2 &=\begin{bmatrix}
         0 & 0 & 0 \\ 0 & 0 & 0 \\ 0 & 1 & 0
       \end{bmatrix}, &
  Y_3 &=\begin{bmatrix}
         0 & 0 & 0 \\ 0 & 0 & 0 \\ 1 & 0 & 0
       \end{bmatrix},
\end{align*}
\endgroup
and the control inputs $u_j(t)\in\mathbb{C}$.

One can easily check that the two Lie subalgebras $\mathfrak{k}_1=\lie\{H_1, X_1,Y_1\}$ and $\mathfrak{k}_2=\lie\{H_2,X_2,Y_2\}$, when considered as Lie algebras over $\mathbb{R}$, are isomorphic to $\mathfrak{so}(3)$. As discussed in Section~\ref{sec:Sn}, controllability of systems on $\mathrm{SO}(3)$ can be characterized by permutation cycles in $S_3$. This suggests that we can characterize controllability of
systems on $\mathrm{SL}(3,\mathbb{C})$ by two copies of $S_3$. Formally, we want to establish a map $\iota:\mathcal{P}(\mathcal{B''})\rightarrow S_3\oplus S_3$, where $\oplus$ denotes the direct sum of groups, so that non-vanishing Lie brackets correspond to cycles with increased length. In this case, we define an element $\sigma=(\sigma_1,\sigma_2)$ in $S_3\oplus S_3$ to be a cycle if both $\sigma_1$ and $\sigma_2$ are cycles in $S_3$, and the length of $\sigma$ is defined to be the sum of the length of $\sigma_1$ and $\sigma_2$. Here is one possible definition of $\iota$:
\begin{align*}
  H_1 &\mapsto (e,e), & H_2 &\mapsto (e,e), & \\
  X_1 &\mapsto ((12),e), & X_2 &\mapsto (e,(12)), & X_3
  &\mapsto ((12),(12)), \\
  Y_1 &\mapsto ((23),e), & Y_2 &\mapsto (e,(23)), & Y_3
  &\mapsto ((23),(23)),
\end{align*}
where $e$ denotes the identity of $S_3$. Following this definition of $\iota$, we can check that if
$B_1,B_2\in\mathcal{B}''$ satisfy $[B_1,B_2]\neq0$, then the length of $\iota([B_1,B_2])$ is greater than or equal to the length of both $\iota(B_1)$ and $\iota(B_2)$. Moreover, if neither $B_1$ nor $B_2$ is equal to $H_1$ or $H_2$, then the length of $\iota([B_1,B_2])$ is strictly greater than the length of both
$\iota(B_1)$ and $\iota(B_2)$. This relation between Lie brackets of elements in $\mathcal{B}''$ and length of cycles in $S_3\oplus S_3$ allows us to draw the following conclusion:

\begin{proposition}
  \label{prop:sl3c}
  The system in \cref{eq:sl3} is controllable on
  ${\rm SL}(3,\mathbb{C})$ if and only if there exists a subset
  $\Sigma$ of $\Gamma=\{B_1,\dots,B_m\}$ such that $\iota(\Sigma)$ is
  a 6-cycle in $S_3\oplus S_3$.
\end{proposition}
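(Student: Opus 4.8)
The plan is to reduce the statement to \cref{thm:LARC} and then to the three-letter instance of \cref{thm:SOn_Sn}, exploiting the Cartan decomposition of $\mathfrak{sl}(3,\mathbb{C})$. By \cref{thm:LARC} controllability is equivalent to $\lie(\Gamma)=\mathfrak{sl}(3,\mathbb{C})$, so it suffices to prove that $\lie(\Sigma)=\mathfrak{sl}(3,\mathbb{C})$ for some $\Sigma\subseteq\Gamma$ if and only if $\iota(\Sigma)$ is a $6$-cycle. The structural fact I would lean on is that $\mathfrak{sl}(3,\mathbb{C})$ is generated by the two simple-root triples $\mathfrak{k}_1=\lie\{H_1,X_1,Y_1\}$ and $\mathfrak{k}_2=\lie\{H_2,X_2,Y_2\}$, since $[X_1,X_2]=X_3$, $[Y_1,Y_2]=-Y_3$ and $[X_3,Y_3]=H_1+H_2$ recover the whole basis $\mathcal{B}''$. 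Each $\mathfrak{k}_i$ is an $\mathfrak{sl}(2)$-triple whose generation is governed by the $S_3$ characterization (\cref{thm:SOn_Sn} with $n=3$), and by construction the first (resp.\ second) component of $\iota$ tracks precisely the $\mathfrak{k}_1$- (resp.\ $\mathfrak{k}_2$-) structure. Thus the target equivalence becomes: $\lie(\Sigma)\supseteq\mathfrak{k}_1$ and $\lie(\Sigma)\supseteq\mathfrak{k}_2$ (hence $\lie(\Sigma)=\mathfrak{sl}(3,\mathbb{C})$) if and only if both components of $\iota(\Sigma)$ are $3$-cycles.

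For the necessity (``only if'') direction I would grade $\mathfrak{sl}(3,\mathbb{C})$ by the root lattice, recording each basis element by its pair of coefficients $(a_1,a_2)$ on the simple roots $\alpha_1,\alpha_2$, and observe that $\iota$ reads off the \emph{sign} of $a_1$ in its first component and of $a_2$ in its second (a $+$ giving $(12)$, a $-$ giving $(23)$, a $0$ giving $e$). Because the Lie bracket \emph{adds} degrees, a collection all of whose members have nonpositive first coordinate can never generate the degree-$(+1,0)$ vector $X_1$; hence $\lie(\Gamma)=\mathfrak{sl}(3,\mathbb{C})$ forces $\Gamma$ to meet each of $\{X_1,X_3\}$, $\{Y_1,Y_3\}$, $\{X_2,X_3\}$, and $\{Y_2,Y_3\}$. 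Choosing one representative from each and ordering the transpositions suitably then yields a $\Sigma$ for which both components of $\iota(\Sigma)$ are $3$-cycles; the only point needing care is parity (each component must use an even number of transpositions to be an even permutation, i.e.\ a $3$-cycle), which a minimal choice of $\Sigma$ arranges.

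For the sufficiency (``if'') direction, a $6$-cycle means both components are $3$-cycles, so $\Sigma$ contains a positive- and a negative-$\alpha_1$ root vector and likewise for $\alpha_2$; I would then run the three-letter argument of \cref{thm:SOn_Sn} inside each triple to recover $\mathfrak{k}_1$ and $\mathfrak{k}_2$ and combine them via the generation fact above. The hard part — and the step where the clean ``two independent copies of $S_3$'' picture is most fragile — is that the highest-root vectors $X_3,Y_3$ contribute a transposition to \emph{both} components at once, yet $\lie\{X_3,Y_3\}=\mathrm{span}\{X_3,Y_3,H_1+H_2\}$ is only the highest-root $\mathfrak{sl}(2)$, not $\mathfrak{sl}(3,\mathbb{C})$. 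This is exactly the manifestation of the root spaces failing to be subalgebras (the ``triangle rule'' of \cref{lem:lie-graph} breaks, as remarked before \cref{prop:sl3c}): a bracket like $[X_3,Y_3]$ lands in the Cartan and \emph{lowers} cycle length rather than raising it. I therefore expect the crux to be showing that a $6$-cycle genuinely forces the \emph{simple}-root triples to be generated — for instance by restricting to a minimal $\Sigma$ on which successive brackets strictly increase length, or by arguing directly from the degree data — and I would want to test this step carefully against configurations such as $\Sigma=\{X_3,Y_3\}$ before asserting the equivalence in full.
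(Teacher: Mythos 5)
Your instinct to test $\Sigma=\{X_3,Y_3\}$ is exactly right, and you should follow it all the way: this is not merely a fragile step but an outright counterexample to the sufficiency direction as stated. Indeed $\iota(\{X_3,Y_3\})=\bigl((12)(23),(12)(23)\bigr)=\bigl((123),(123)\bigr)$ is a $6$-cycle under the paper's definition of length, yet $[X_3,Y_3]=H_1+H_2$ and $\lie\{X_3,Y_3\}=\mathrm{span}\{X_3,Y_3,H_1+H_2\}$ is only the three-dimensional highest-root $\mathfrak{sl}(2)$, so the system with $\Gamma=\{X_3,Y_3\}$ is not controllable on $\mathrm{SL}(3,\mathbb{C})$. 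The same pair falsifies the paper's own supporting assertion that a non-vanishing bracket of two non-Cartan basis elements has $\iota$-length at least that of its arguments: $[X_3,Y_3]$ lands in the Cartan subalgebra and the length collapses from $4$ to at most $2$. You should also know that the paper supplies no proof of \cref{prop:sl3c} whatsoever --- the proposition is asserted directly from that flawed length-monotonicity observation --- so there is nothing in the source that repairs the gap you found. A correct statement needs an extra hypothesis, e.g.\ restricting to a minimal $\Sigma$ along which iterated brackets strictly increase length, or excluding subsets supported only on the highest root spaces, as you propose.

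Your necessity sketch is sound in outline and already more rigorous than anything in the paper: the root-lattice grading argument (brackets add degrees, so generating the degree-$(+1,0)$ vector $X_1$ forces $\Gamma$ to contain an element of positive $\alpha_1$-degree, and symmetrically for the other three sign conditions) correctly shows that a generating $\Gamma$ meets each of $\{X_1,X_3\}$, $\{Y_1,Y_3\}$, $\{X_2,X_3\}$, $\{Y_2,Y_3\}$, and in each case one checks directly that a suitable choice of representatives yields a pair of $3$-cycles. The remaining loose ends in your write-up are (i) the parity/ordering step in necessity, which you only gesture at, and (ii) the claim that $\mathfrak{k}_1,\mathfrak{k}_2$ are governed by \cref{thm:SOn_Sn} with $n=3$ --- note that $\iota(H_i)=(e,e)$ contributes no transposition, so the correspondence with the $\mathfrak{so}(3)$ picture is not literal and must be argued separately. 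But the headline conclusion of your review should be that the sufficiency direction of \cref{prop:sl3c} fails as stated, for precisely the configuration you identified.
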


From the perspective of representation theory, the basis $\mathcal{B}''$ induces the Cartan decomposition of the Lie algebra $\mathfrak{sl}(3,\mathbb{C})$, in which the $2$-dimensional Cartan subalgebra is spanned by $H_1$ and $H_2$. Moreover, the Weyl group of $\mathfrak{sl}(3,\mathbb{C})$ is $S_3$. The above facts provide another explanation for requiring two copies of $S_3$ in the characterization of controllability for systems on ${\rm SL}(3,\mathbb{C})$ governed by vector fields in $\mathcal{B}''$. Notice that the concepts of Cartan subalgebras and Weyl groups are well-defined for all semisimple Lie algebras, not only for ${\rm SL}(3,\mathbb{C})$. Also, Weyl groups are all finite groups and thus subgroups of some symmetric groups. As a result, it is possible to extend the symmetric-group characterization of controllability to systems defined on general semisimple Lie groups. To be more specific, consider the bilinear system defined on a semisimple Lie group $G$ of the form,
\begin{equation}
  \label{eq:semisimple}
  \dot X=\Bigl(\sum_{i=1}^mu_iB_i\Bigr)X,\quad X(0)=I,
\end{equation}
where $B_i$ are elements in the Lie algebra $\mathfrak{g}$ of $G$. Moreover, let $\mathfrak{g}=\mathfrak{h}\oplus\mathfrak{k}$ be the Cartan decomposition of $\mathfrak{g}$ with $\mathfrak{h}$ being the Cartan subalgebra and $W$ be the Weyl group of $\mathfrak{g}$. We further assume that 
$B_i\in\mathfrak{h}$ or $B_i\in\mathfrak{k}$ for every $i=1,\dots,m$, then 
the above discussion leads to the following conjecture for systems defined on semisimple Lie groups.

\begin{conjecture}
  \label{conj:cartan.decomp.controllable}
  The system in \cref{eq:semisimple} is controllable on $G$ if and only if there exits $\Sigma\subseteq\Gamma$ such that $\iota(\Sigma)$ is a cycle of maximal length in $W^h$, where $\Gamma=\{B_1,\dots,B_m\}$ is the set of control vector fields, $h=\dim\mathfrak{h}$, and $W^h$ denotes the direct sum of $h$ copies of $W$.
\end{conjecture}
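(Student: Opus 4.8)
The plan is to reduce \cref{conj:cartan.decomp.controllable} to the LARC and then build a precise dictionary between the root-space structure of $\mathfrak{g}$ and the cycle structure of $W^h$, generalizing the $\mathrm{SL}(3,\mathbb{C})$ computation that produced \cref{prop:sl3c}. First I would invoke \cref{thm:LARC} to restate controllability as $\lie(\Gamma)=\mathfrak{g}$. Fixing the root-space (Cartan) decomposition $\mathfrak{g}=\mathfrak{h}\oplus\bigoplus_{\alpha\in\Phi}\mathfrak{g}_\alpha$ with root system $\Phi$, simple roots $\alpha_1,\dots,\alpha_h$, and a Chevalley basis $\mathcal{B}_{\mathfrak{g}}=\{H_1,\dots,H_h\}\cup\{E_\alpha:\alpha\in\Phi\}$, the standing hypothesis that each $B_i$ lies in $\mathfrak{h}$ or $\mathfrak{k}$ guarantees that every control vector field is either a Cartan element or a single root vector, so that $\iota$ may be defined on generators exactly as in the $\mathrm{SL}(3,\mathbb{C})$ case.

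The central construction is the map $\iota:\mathcal{P}(\mathcal{B}_{\mathfrak{g}})\to W^h$. Writing each root as $\beta=\sum_{i=1}^{h}c_i(\beta)\,\alpha_i$, I would send a root vector $E_\beta$ to the element of $W^h$ that is nontrivial exactly in those copies $i$ with $c_i(\beta)\neq 0$, placing in each such copy one of two fixed \emph{elementary} Weyl elements according to the sign of $\beta$; Cartan elements map to the identity, and $\iota(\Sigma)$ of an ordered subset is the product of the images of its elements. The two elementary elements must be chosen so that their product is a cycle of maximal length in $W$, realized via an embedding $W\hookrightarrow S_N$ that also fixes the meaning of ``cycle'' and ``length'' for a general Weyl group. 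The key lemma to establish is a length-monotonicity statement paralleling the one already verified for $\mathcal{B}''$: whenever $E_\beta,E_\gamma$ are root vectors with $[E_\beta,E_\gamma]\neq 0$---equivalently $\beta+\gamma\in\Phi$---the length of $\iota(E_\beta)\iota(E_\gamma)$ is at least that of each factor, and is strictly larger when neither factor is a Cartan element. This lemma is what transfers the closure of $\lie(\Sigma)$ under brackets into the growth of cycles in $W^h$.

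With this dictionary in place, sufficiency and necessity follow the template of \cref{thm:SOn_Sn}. For sufficiency, a maximal cycle $\iota(\Sigma)$ forces the product of elementary elements in every copy $i$ to be a maximal cycle of $W$; tracing this back through $\iota$ shows that $\Sigma$ generates, for each simple root, the $\mathfrak{sl}(2)$-triple $\{E_{\alpha_i},E_{-\alpha_i},[E_{\alpha_i},E_{-\alpha_i}]\}$ together with the cross-brackets needed to reach every $\mathfrak{g}_\beta$, so that $\lie(\Sigma)=\mathfrak{g}$ because the Chevalley generators $\{E_{\pm\alpha_i}\}$ generate $\mathfrak{g}$. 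For necessity, given $\lie(\Gamma)=\mathfrak{g}$ I would pass to a minimal generating subset $\Sigma$ containing no redundant elements, as in the necessity argument of \cref{thm:SOn_Sn}, argue that it must contain root vectors whose iterated brackets recover all of $\{E_{\pm\alpha_i}\}$, and read off from the monotonicity lemma that the images saturate each copy to a maximal cycle.

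The hard part will be the map $\iota$ itself. Root addition (the Lie bracket) and multiplication in $W^h$ (composition of Weyl elements) are a priori unrelated operations, and the agreement exploited for $\mathfrak{sl}(3,\mathbb{C})$---where every $c_i(\beta)\in\{0,1\}$---is genuinely special. The principal obstacles are (i) defining $\iota$ on \emph{non-simple} root spaces, in particular handling roots with some coefficient $c_i(\beta)>1$, which do not occur in type $A_2$ and for which the naive ``one elementary element per active copy'' recipe may fail length-monotonicity; and (ii) making ``cycle of maximal length in $W^h$'' canonical for Weyl groups that are not symmetric groups, where the embedding $W\hookrightarrow S_N$ must be chosen so that the elementary elements remain genuine cycles and their products remain maximal. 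I expect that resolving (i) may require encoding coefficients by iterated elementary elements or by enlarging the per-copy target, and that establishing monotonicity in full generality---rather than by type-by-type checking through the classification---will be the decisive technical step.
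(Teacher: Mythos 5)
The statement you are trying to prove is stated in the paper as a \emph{conjecture}: the paper offers no proof of it, only the motivating $\mathrm{SL}(3,\mathbb{C})$ computation leading to \cref{prop:sl3c} (which is itself asserted rather than proved in full) and an informal discussion of why the Weyl group and the dimension $h$ of the Cartan subalgebra should enter. Your proposal is therefore not being measured against an existing argument, and it should be judged on its own terms --- and on those terms it is a research plan rather than a proof. The entire argument is conditional on the existence of a map $\iota:\mathcal{P}(\mathcal{B}_{\mathfrak{g}})\to W^h$ satisfying the length-monotonicity lemma, and you never construct it; you only describe a recipe and then, candidly, list the reasons it may fail. Those reasons are exactly the obstructions that presumably keep this a conjecture: (i) outside type $A$, roots have simple-root coefficients exceeding $1$ (already in $B_2$ and $G_2$), so the ``one elementary element per active copy'' recipe cannot distinguish $\mathfrak{g}_{\alpha+\beta}$ from $\mathfrak{g}_{\alpha+2\beta}$ and monotonicity has no reason to hold; and (ii) ``cycle of maximal length in $W$'' is not a well-defined notion for a general Weyl group --- $W$ is only a subgroup of some $S_N$ via a non-canonical embedding, and whether an element is a cycle, let alone a maximal one, depends on that embedding. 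Until these are resolved the sufficiency and necessity arguments are vacuous, since they both quote the unproved lemma.

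Two further points. First, even granting the dictionary, your sufficiency step contains a leap: a maximal cycle in every copy of $W$ tells you something about the images $\iota(\Sigma)$, but ``tracing this back through $\iota$'' to conclude that $\Sigma$ generates every $\mathfrak{sl}(2)$-triple $\{E_{\alpha_i},E_{-\alpha_i},[E_{\alpha_i},E_{-\alpha_i}]\}$ requires $\iota$ to be injective enough on the relevant data, which the proposed $\iota$ is not (distinct roots with the same support and signs map to the same element of $W^h$). Second, your framing of the target is slightly off relative to the paper's own heuristic: the paper motivates the $h$ copies of $W$ by the need to separate the $h$ commuting Cartan directions, not by the decomposition of roots into simple-root coordinates, so a correct construction may need to treat elements of $\mathfrak{h}$ less trivially than mapping them all to the identity. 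In short: the approach is a reasonable generalization of the paper's $\mathrm{SL}(3,\mathbb{C})$ template, but the decisive technical content --- the construction of $\iota$ and the proof of length-monotonicity for arbitrary root systems --- is missing, and the paper itself does not supply it either.
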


Recall that the central idea of the symmetric group approach to controllability analysis is to map elements with non-vanishing Lie brackets to cycles with increased length. However, all elements in the Cartan subalgebra have vanishing Lie brackets. The intuition behind the above conjecture comes from the need of appropriately representing these elements using permutations by mapping elements in different root spaces to permutation cycles in different components of the direct sum of $h$ copies of symmetric groups, where $h$ denotes the dimension of the Cartan subalgebra.
Moreover, because the interaction between elements in and outside the Cartan subalgebra is characterized by the Weyl group, which is a subgroup of a symmetric group, the symmetric group method applies directly.

\subsection{Non-Intertwining Decomposition in Graph-Theoretic Method}

In the case that the Lie algebra generated by drift and control vector fields of a bilinear system can be decomposed into components that are Lie subalgebras, we will see that the graph-theoretic method applies more naturally for controllability analysis. One decomposition of this type is the \emph{non-intertwining decomposition}, through which a Lie algebra is decomposed into a direct sum of Lie subalgebras so that elements from different Lie subalgebras have vanishing Lie brackets. The non-intertwining decomposition generalizes the notion of block diagonalization for matrices.

\begin{definition}
  \label{def:non-intertwining.decomp}
  For a given Lie algebra $\mathfrak{g}$, we call a decomposition $\mathfrak{g}=\mathfrak{g}_1\oplus\cdots\oplus \mathfrak{g}_m$ \emph{non-intertwining} if $[\mathfrak{g}_i, \mathfrak{g}_j]=0$ for
  any Lie subalgebras $\mathfrak{g}_i, \mathfrak{g}_j$, $1 \leqslant{}i\neq{}j \leqslant{}m$.
\end{definition}

For example, every reductive Lie algebra admits a non-intertwining decomposition, and many familiar Lie algebras are reductive, such as the algebra of $n\times n$ complex matrices $\mathfrak{gl}(n,\mathbb{C})$ and the algebra of $n\times n$ skew-symmetric complex matrices $\mathfrak{so}(n,\mathbb{C})$ \cite{Knapp2002lie}. If a Lie algebra admits a non-intertwining decomposition, then we will be able to associate each of its components with a graph. The subsequent question is whether graph representation developed in Section \ref{sec:graph} remains valid to characterize controllability. The answer to this question can be illustrated by a system defined on ${\rm SO}(4)$ whose Lie algebra $\mathfrak{so}(4)$ can be decomposed into a direct sum of two non-intertwining copies of $\mathfrak{so}(3)$, as shown in the following example.

\begin{example}
  \label{ex:spin.reps}
  Let $\mathcal{B}'=\{A_1,A_2,A_3,B_1,B_2,B_3\}$ be a non-standard
  basis of $\mathfrak{so}(4)$, where
  \begin{equation}
    \label{eq:spin.reps.so4.basis}
    \begin{aligned}
      A_1 &=\frac{\Omega_{23}+\Omega_{14}}{2}, &
      A_2 &=\frac{\Omega_{13}-\Omega_{24}}{2}, &
      A_3 &=\frac{\Omega_{12}+\Omega_{34}}{2}, \\%
      B_1 &=\frac{\Omega_{13}+\Omega_{24}}{2}, &
      B_2 &=\frac{\Omega_{14}-\Omega_{23}}{2}, &
      B_3 &=\frac{\Omega_{12}-\Omega_{34}}{2}.
    \end{aligned}
  \end{equation}
  The Lie brackets of the elements in $\mathcal{B}'$ satisfy $[A_i,A_j]=A_k$, $[B_i,B_j]=B_k$ for any ordered 3-tuple $(i,j,k)=(1,2,3)$, $(2,3,1)$ or $(3,1,2)$, and $[A_i,B_j]=0$ for any $1 \leqslant{} i,j \leqslant{} 3$. As a result, $\mathfrak{so}(4)$ admits a non-intertwining decomposition as
  $\mathfrak{so}(4)=\lie\{A_1,A_2,A_3\}\oplus\lie\{B_1,B_2,B_3\}$.

  We note that the Lie bracket relations among elements in $\{A_1,A_2,A_3\}$, as well as $\{B_1,B_2,B_3\}$, are the same as the Lie bracket relations among elements in the standard basis of $\mathfrak{so}(3)$. In other words, both $\lie\{A_1,A_2,A_3\}$ and $\lie\{B_1,B_2,B_3\}$ are isomorphic to $\mathfrak{so}(3)$, so $K_3$ becomes the suitable graph representation for each set. Moreover, because $[A_i,B_j]=0$ for any $i,j=1,2,3$, the graph representation for the non-standard basis $\mathcal{B}'=\{A_1,A_2,A_3\}\sqcup\{B_1,B_2,B_3\}$ is a \emph{disjoint union} of two copies of $K_3$, as shown in \cref{fig:spin.reps}, instead of the complete graph $K_4$ associated with the standard basis of $\mathfrak{so}(4)$.

  \begin{figure}[htbp]
    \centering
    \medskip
    \begin{tikzpicture}[semithick]
      \node[above] at (90:\R) {$v_1$};
      \node[below right] at (330:\R) {$v_2$};
      \node[below left] at (210:\R) {$v_3$};
      
      \node at (20:1.2*\R) {$\tau'(A_1)$};
      \node at (160:1.2*\R) {$\tau'(A_3)$};
      \node at (270:0.9*\R) {$\tau'(A_2)$};
      
      \draw (90:\R) -- (210:\R);
      \draw (210:\R) node[vertex]{} -- (330:\R);
      \draw (330:\R) node[vertex]{} -- (90:\R) node[vertex]{};

      \node[above, xshift=4.5*\R] at (90:\R) {$w_1$};
      \node[below right, xshift=4.5*\R] at (330:\R) {$w_2$};
      \node[below left, xshift=4.5*\R] at (210:\R) {$w_3$};

      \node [xshift=4.5*\R] at (20:1.2*\R) {$\tau'(B_1)$};
      \node [xshift=4.5*\R] at (160:1.2*\R) {$\tau'(B_3)$};
      \node [xshift=4.5*\R] at (270:0.9*\R) {$\tau'(B_2)$};
      
      \draw[xshift=4.5*\R] (90:\R) -- (210:\R);
      \draw[xshift=4.5*\R] (210:\R) node[vertex]{} -- (330:\R);
      \draw[xshift=4.5*\R] (330:\R) node[vertex]{} -- (90:\R) node[vertex]{};
    \end{tikzpicture}
    \caption{The graphs associated with the sets $\{A_1,A_2,A_3\}$ and
      $\{B_1,B_2,B_3\}$ in \cref{ex:spin.reps}.}
    \label{fig:spin.reps}
  \end{figure}
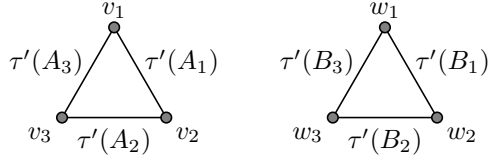

  This example illuminates how the graph representation of controllability developed in Section \ref{sec:graph} can be extended to the bilinear system governed by vector fields generating a non-intertwining Lie algebra, after modifying the definition of $\tau$ in \cref{eq:graph-map} accordingly.
  %

  \begin{proposition}
    \label{prop:controllability.two.K3s}
    Consider a bilinear system on $\mathrm{SO}(4)$ governed by the vector fields in $\mathcal{B}'$, given by
    \begin{equation}
      \label{eq:so4_non_standard}
      \dot{X}(t)=\Bigl(\sum_{i=1}^mu_iC_i\Bigr)X(t), \quad X(0)=I,
    \end{equation}
    with $\Gamma=\{C_1,\dots,C_m\}\subseteq\mathcal{B}'$. Given a graph map $\tau':\mathcal{P}(\mathcal{B}')\to\mathcal{G}'$, where $\mathcal{G}'$ denotes the collection of subgraphs of
    $K_3\sqcup K_3$, satisfying
    \[
      \tau'(A_i)=v_iv_{i+1} \quad \text{and} \quad
      \tau'(B_i)=w_iw_{i+1},
    \]
    with the index taken modulo $3$, the system in \cref{eq:so4_non_standard} is controllable if and only if $\overline{\tau'(\Gamma)}=K_3\sqcup K_3$, or equivalently, if and only if each component of $\tau'(\Gamma)$ is connected in $K_3$.
  \end{proposition}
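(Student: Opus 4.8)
The plan is to reduce controllability on $\mathrm{SO}(4)$ to two independent instances of the $\mathfrak{so}(3)$ problem already resolved in \cref{sec:graph}, and then to re-express the resulting algebraic condition as graph connectivity. First I would invoke the LARC (\cref{thm:LARC}) to replace controllability of \cref{eq:so4_non_standard} by the equivalent algebraic condition $\lie(\Gamma)=\mathfrak{so}(4)$. Writing $\Gamma=\Gamma_A\sqcup\Gamma_B$ with $\Gamma_A=\Gamma\cap\{A_1,A_2,A_3\}$ and $\Gamma_B=\Gamma\cap\{B_1,B_2,B_3\}$, I would then work with the non-intertwining decomposition $\mathfrak{so}(4)=\mathfrak{a}\oplus\mathfrak{b}$, where $\mathfrak{a}=\lie\{A_1,A_2,A_3\}$ and $\mathfrak{b}=\lie\{B_1,B_2,B_3\}$, as established in \cref{ex:spin.reps}.

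The key structural step is to prove $\lie(\Gamma)=\lie(\Gamma_A)\oplus\lie(\Gamma_B)$. The inclusion $\supseteq$ is immediate. For $\subseteq$ I would argue by induction on bracket length that every iterated Lie bracket of elements of $\Gamma$ lies \emph{entirely} in $\mathfrak{a}$ or \emph{entirely} in $\mathfrak{b}$: the base case holds since $\Gamma_A\subseteq\mathfrak{a}$ and $\Gamma_B\subseteq\mathfrak{b}$, and in the inductive step a bracket of two elements from the same summand stays there, while a bracket mixing a factor from $\mathfrak{a}$ with one from $\mathfrak{b}$ vanishes because $[A_i,B_j]=0$ forces $[\mathfrak{a},\mathfrak{b}]=0$. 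Hence the span of all iterated brackets is the sum of those supported in $\mathfrak{a}$ and those supported in $\mathfrak{b}$, and $\mathfrak{a}\cap\mathfrak{b}=\{0\}$ makes the sum direct. It follows that $\lie(\Gamma)=\mathfrak{so}(4)$ if and only if both $\lie(\Gamma_A)=\mathfrak{a}$ and $\lie(\Gamma_B)=\mathfrak{b}$.

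I would then transport each of these two conditions to the $\mathfrak{so}(3)$ result. By \cref{ex:spin.reps}, the bracket relations among $\{A_1,A_2,A_3\}$ coincide with those of the standard basis of $\mathfrak{so}(3)$ under the correspondence $A_1\leftrightarrow\Omega_{12}$, $A_2\leftrightarrow\Omega_{23}$, $A_3\leftrightarrow\Omega_{13}$, and $\tau'$ restricted to $\{A_1,A_2,A_3\}$ is exactly the graph map onto the triangle on $\{v_1,v_2,v_3\}$; the same holds for $\{B_1,B_2,B_3\}$ on $\{w_1,w_2,w_3\}$. Applying \cref{thm:connectivity.controllability} (equivalently \cref{cor:completion}) to each copy gives $\lie(\Gamma_A)=\mathfrak{a}$ if and only if $\tau'(\Gamma_A)$ is connected, equivalently $\overline{\tau'(\Gamma_A)}=K_3$, and likewise for $\Gamma_B$. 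Since $\tau'(\Gamma_A)$ and $\tau'(\Gamma_B)$ occupy disjoint vertex sets, no triangular-closure edge can ever cross between them, so $\overline{\tau'(\Gamma)}=\overline{\tau'(\Gamma_A)}\sqcup\overline{\tau'(\Gamma_B)}$; by \cref{lem:tri.closure.complete.components} this equals $K_3\sqcup K_3$ precisely when each part is connected in its triangle, which is the stated equivalence.

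I expect the main obstacle to be the structural splitting $\lie(\Gamma)=\lie(\Gamma_A)\oplus\lie(\Gamma_B)$. Although it is intuitively clear from $[\mathfrak{a},\mathfrak{b}]=0$, making it rigorous requires the clean bracket-length induction above (or an appeal to the Jacobi identity) to confirm that no mixed bracket survives, together with the directness of the sum. Everything after this splitting is a routine re-use of the $\mathfrak{so}(3)$ machinery already developed, so the conceptual weight of the proposition rests entirely on correctly exploiting the non-intertwining structure.
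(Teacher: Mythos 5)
Your argument is correct, but it takes a different route from the paper's own proof. The paper proves the proposition by verifying that $\tau'$ inherits the three structural properties that made the $\mathfrak{so}(n)$ machinery work (namely $\tau'(\mathcal{B}')=K_3\sqcup K_3$, that $[C_1,C_2]\neq 0$ exactly when the edges $\tau'(C_1)$ and $\tau'(C_2)$ are incident, and the triangle rule of \cref{lem:lie-graph}), after which the triangular-closure argument of \cref{cor:completion} and \cref{thm:connectivity.controllability} transfers verbatim to the graph $K_3\sqcup K_3$, component by component. You instead make the non-intertwining decomposition the engine of the proof: you split $\Gamma=\Gamma_A\sqcup\Gamma_B$, establish $\lie(\Gamma)=\lie(\Gamma_A)\oplus\lie(\Gamma_B)$ by the bracket-length induction, and reduce to two independent $\mathfrak{so}(3)$ instances via the isomorphisms of \cref{ex:spin.reps}. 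Both arguments are sound; your identification $A_1\leftrightarrow\Omega_{12}$, $A_2\leftrightarrow\Omega_{23}$, $A_3\leftrightarrow\Omega_{13}$ is consistent with the cyclic bracket relations, and the observation that the triangular closure cannot create edges between the two disjoint vertex sets correctly yields $\overline{\tau'(\Gamma)}=\overline{\tau'(\Gamma_A)}\sqcup\overline{\tau'(\Gamma_B)}$. What each approach buys: the paper's verification-of-properties route is shorter and keeps the graph $K_3\sqcup K_3$ as a single object, but it leaves the componentwise reduction implicit; your route makes the algebraic splitting explicit and is exactly the strategy the paper later formalizes in general as \cref{thm:direct.sum.lie.algebra.disjoint.union.graphs}, so in effect you have proved the special case by anticipating the general theorem. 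The one step worth tightening is the passage from $[A_i,B_j]=0$ on basis elements to $[\mathfrak{a},\mathfrak{b}]=0$ on the spans (bilinearity of the bracket, plus the fact that $\mathfrak{a}$ and $\mathfrak{b}$ are already spanned by $\{A_i\}$ and $\{B_j\}$), and the remark that a nonvanishing iterated bracket must have all of its leaves in a single $\Gamma_A$ or $\Gamma_B$; both are easy but should be stated to make the directness of the sum airtight.
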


  \begin{proof}
    The above result becomes obvious once we verify the following properties of $\tau'$ (c.f.\ \cref{lem:lie-graph}), which are straightforward.
    \begin{enumerate}[font=\normalfont,label={(\arabic*)}]%
      \item $\tau'(\mathcal{B}')=K_3\sqcup{}K_3$;
      \item For distinct $C_1,C_2\in\mathcal{B}'$, their Lie bracket $[C_1,C_2]\neq{}0$ if and only if the two edges $\tau'(C_1)$ and $\tau'(C_2)$ have a common vertex;
      \item The edges $\tau'(C_1),\tau'(C_2)$ and $\tau'([C_1,C_2])$ form a triangle if $[C_1,C_2]\neq{}0$, or equivalently,
        \[
          \tau'(\{C_1,C_2,[C_1,C_2]\})=K_3,
        \]
        for any $C_1,C_2\in\mathcal{B}'$ such that $[C_1,C_2]\neq{}0$.
    \end{enumerate}
  \end{proof}
  
  In addition, recall from \cref{cor:control_number} that three control inputs are enough to have a controllable driftless system on $\mathrm{SO}(4)$ governed by the vector fields in the standard basis; or equivalently, three edges can form a connected graph with four vertices. However, for systems in the form of \cref{eq:so4_non_standard}, they require at least four control inputs to be controllable on $\mathrm{SO}(4)$. From the graph aspect, this is because both components of $\tau(\Gamma)$ require at least two edges to be connected.
\end{example}

\Cref{ex:spin.reps} further illustrates that for bilinear systems evolving on $\son$ governed by non-standard basis vector fields, i.e., vector fields that are not in the form of standard basis elements, in $\mathfrak{so}(n)$, 
controllability may not be characterized by using one complete graph $K_n$. Taking the system in \cref{eq:so4_non_standard} as an example, because the Lie algebra of its state-space can be decomposed into a direct sum of two non-intertwining components, its graph representation also requires two components. This finding elucidates that the number of components of the graph associated with a bilinear system is determined by the number of summands in the non-intertwining decomposition of the underlying Lie algebra of the system.

\begin{theorem}
  \label{thm:direct.sum.lie.algebra.disjoint.union.graphs}
  Given a bilinear system
  \begin{equation}
    \label{eq:general.non.intertwining.system}
    \dot{X}(t)=\Bigl(\sum_{i=1}^m\sum_{j=1}^{n_i} u_{ij}B_{ij}\Bigr)X(t), \quad
    X(0)=I,
  \end{equation}
  defined on a Lie group $G$ whose Lie algebra $\mathfrak{g}$ admits a non-intertwining decomposition as $\mathfrak{g}=\mathfrak{g}_1\oplus\cdots\oplus\mathfrak{g}_m$, where $B_{ij}\in\mathcal{B}_i$ and $\mathcal{B}_i$ is a basis of $\mathfrak{g}_i$ for each $i$. Suppose each $\mathcal{B}_i$ is associated with a connected graph $G_i$ such that a subset $\Sigma_i\subseteq\mathcal{B}_i$ generates $\mathfrak{g}_i$ if and only if its associated graph $\tau(\Sigma_i)$ is a connected subgraph of $G_i$, then the system in \cref{eq:general.non.intertwining.system} is controllable on $G$ if and only if $\tau(\Gamma_i)$ is connected for every $i=1,\dots,m$, where $\Gamma_i=\{B_{ij}:j=1,\dots, n_i\}$.
  %
\end{theorem}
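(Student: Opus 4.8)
The plan is to reduce the claim to \cref{thm:LARC} (the LARC) together with the defining property of the non-intertwining decomposition, and then invoke the standing hypothesis relating the graph $\tau(\Gamma_i)$ to generation of each $\mathfrak{g}_i$. Writing $\Gamma=\bigcup_{i=1}^m\Gamma_i$ for the full set of control vector fields, the LARC says that the system in \cref{eq:general.non.intertwining.system} is controllable on $G$ if and only if $\lie(\Gamma)=\mathfrak{g}$; so it suffices to show that $\lie(\Gamma)=\mathfrak{g}$ if and only if $\tau(\Gamma_i)$ is connected for every $i=1,\dots,m$.

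The key step, and the main technical content of the argument, is to establish the decomposition $\lie(\Gamma)=\bigoplus_{i=1}^m\lie(\Gamma_i)$. First I would verify that $\mathfrak{h}:=\bigoplus_{i=1}^m\lie(\Gamma_i)$ is itself a Lie subalgebra of $\mathfrak{g}$: given $A=\sum_iA_i$ and $B=\sum_iB_i$ with $A_i,B_i\in\lie(\Gamma_i)\subseteq\mathfrak{g}_i$, the bracket expands as $[A,B]=\sum_{i,j}[A_i,B_j]$, where every cross term with $i\neq j$ vanishes since $[\mathfrak{g}_i,\mathfrak{g}_j]=0$ by \cref{def:non-intertwining.decomp}, while each diagonal term $[A_i,B_i]$ remains in $\lie(\Gamma_i)$ because $\lie(\Gamma_i)$ is a Lie subalgebra. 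Thus $\mathfrak{h}$ is a subspace containing $\Gamma$ and closed under the bracket, so minimality of $\lie(\Gamma)$ yields $\lie(\Gamma)\subseteq\mathfrak{h}$. The reverse inclusion is immediate: $\Gamma_i\subseteq\Gamma$ forces $\lie(\Gamma_i)\subseteq\lie(\Gamma)$ for each $i$, whence $\mathfrak{h}=\sum_i\lie(\Gamma_i)\subseteq\lie(\Gamma)$, and the sum is direct because the summands lie in the distinct components $\mathfrak{g}_i$.

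With the decomposition in hand, the remainder is routine. Since $\lie(\Gamma_i)\subseteq\mathfrak{g}_i$ and the $\mathfrak{g}_i$ sit in direct sum, projecting the equality $\bigoplus_i\lie(\Gamma_i)=\bigoplus_i\mathfrak{g}_i$ onto the $i$-th summand shows that $\lie(\Gamma)=\mathfrak{g}$ holds if and only if $\lie(\Gamma_i)=\mathfrak{g}_i$ for every $i$. Finally, applying the standing hypothesis with $\Sigma_i=\Gamma_i\subseteq\mathcal{B}_i$, the identity $\lie(\Gamma_i)=\mathfrak{g}_i$ is equivalent to $\tau(\Gamma_i)$ being a connected subgraph of $G_i$. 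Chaining these equivalences gives the theorem.

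I expect the only genuine obstacle to be the bookkeeping in the decomposition step: one must check that the non-intertwining property eliminates exactly the cross brackets and that $\sum_i\lie(\Gamma_i)$ is a direct sum, so that the componentwise projection used in the final step is legitimate. Everything else is a direct appeal to the LARC and to the hypothesis placed on each component graph $G_i$.
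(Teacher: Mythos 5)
Your argument is correct and follows essentially the same route as the paper: reduce to the LARC, use the non-intertwining property to obtain $\lie(\Gamma)=\bigoplus_{i=1}^m\lie(\Gamma_i)$, and then invoke the componentwise hypothesis on each $G_i$. The only difference is that you spell out the verification that $\bigoplus_i\lie(\Gamma_i)$ is a bracket-closed subspace containing $\Gamma$, a step the paper asserts without detail.
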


\begin{proof}
  By the assumption, $\tau(\Gamma_i)$ is connected if and only if
  $\lie(\Gamma_i)=\mathfrak{g}_i$ for each $i=1,\dots,m$. Together
  with the non-intertwining property between each pair of
  $\mathfrak{g}_i$ and $\mathfrak{g}_j$, the connectivity of
  $\tau(\Gamma_i)$ for all $i$ is equivalent to
  \[
    \lie(\Gamma) =\bigoplus_{i=1}^m\lie(\Gamma_i)
    =\bigoplus_{i=1}^m\mathfrak{g}_i =\mathfrak{g},
  \]
  where $\Gamma=\bigcup_{i=1}^m\Gamma_i$. The proof is then concluded
  by applying the LARC.
  %
\end{proof}

\begin{remark}[Symmetric Group Method for Systems Governed by Non\hyp{}intertwining Lie Algebras]
  We find it worthwhile to mention that the symmetric group method also applies to bilinear systems with their underlying Lie algebras admitting a non-intertwining decomposition, through a properly defined $\iota$. For instance, in \cref{ex:spin.reps}, since both $\{A_i\}$ and $\{B_i\}$ in \cref{eq:spin.reps.so4.basis} are isomorphic to the standard basis in $\mathfrak{so}(3)$, the symmetric group method extends to the systems in \cref{eq:so4_non_standard} as well, by associating each component in the decomposition to a copy of $S_3$ and defining $\iota(A_i,B_j)=\bigl((i,i+1), (j,j+1)\bigr)$, with the index taken modulo $3$. Consequently, the system in \cref{eq:so4_non_standard} is controllable if and only if $\iota$ relates $\Gamma$ to two
  disjoint $3$-cycles in $S_3\oplus S_3$.
\end{remark}

\section{Summary}

In this paper, we develop a combinatorics-based framework to characterize controllability of bilinear systems evolving on Lie groups, in which Lie bracket operations of vector fields are represented by operations on permutations in a symmetric group and edges in a graph. Through such representations, we obtain the tractable and transparent combinatorial characterizations of controllability in terms of permutation cycles and graph connectivity. This framework is established by first considering bilinear systems on $\son$, and we show that, in this case, the permutation and graph representations are equivalent. Then, by exploiting techniques in representation theory, we extend our investigation into a more general category of bilinear systems via proper decompositions of the underlying Lie algebras of the systems. In particular, we illustrate the application of the developed combinatorial methods to bilinear systems whose underlying Lie algebras admit the Cartan or non-intertwining decomposition. The presented methodology not only provides an alternative to the LARC, but also advances geometric control theory 
by integrating it with techniques in combinatorics and representation theory. As a final remark, compared to known graph-theoretic methods mostly developed for networked or multi-agent systems, our framework proposes novel applications of graphs to the study of bilinear control systems.

\appendix

\section{Symmetric Groups and Permutations} \label{appd:Sn}

In this appendix, we give a brief review of the symmetric group
theory. For a thorough discussion on symmetric groups, the reader can
refer to any standard algebra textbook, for example~\cite{Lang02}. Let
$X_n$ be a finite set of $n$ elements, and without loss of generality,
we may assume $X_n=\{1,\cdots,n\}$. A \emph{permutation} $\sigma$ of
$X_n$ is a bijection from $X_n$ onto itself, and is denoted by
\[
  \sigma=\begin{pmatrix}
    1 & 2 & \cdots & n \\
    i_1 & i_2 & \cdots & i_n
  \end{pmatrix}
\] 
if $\sigma(1)=i_1$, \dots, $\sigma(n)=i_n$ for distinct
$i_1,\ldots,i_n\in{}X_n$. A permutation that switches only two
elements is called a \emph{transposition}, and is denoted by
$\sigma=(i_1i_2)$ if $i_1\neq i_2$ and $\sigma$ fixes all other
indices except for $\sigma(i_1)=i_2$ and $\sigma(i_2)=i_1$. More
generally, an \emph{$r$-cycle} denoted by $\sigma=(i_1i_2\cdots{}i_r)$
is a permutation that satisfies $\sigma(i_1)=i_2$,
$\sigma(i_{2})=i_3$, \ldots, $\sigma(i_r)=i_1$ and fixes all other
indices.  It can be shown that any permutation can be decomposed
uniquely into disjoint cycles (cycles that have no common indices).
For example, when $n=4$, the permutation \(\bigl(\begin{smallmatrix}
  1 & 2 & 3 & 4\\
  2 & 3 & 4 & 1
\end{smallmatrix}\bigr)\)
can be represented by a single $4$-cycle $(1234)$; while the
permutation \(\bigl(\begin{smallmatrix}
  1 & 2 & 3 & 4 \\
  3 & 4 & 1 & 2
\end{smallmatrix}\bigr)\)
is the composition of two transpositions ($2$-cycles):
$(13)(24)$. Given a permutation $\sigma$ of $X_n$ and an integer
$i$, $1\leqslant{}i\leqslant{}n$, the \emph{orbit} of $i$ is formed
under the cyclic group generated by $\sigma$. So for
$\sigma=(1234)$, the orbit of $2$ is
$\{\sigma^i(2):i\in\mathbb{N}\} =\{2, \sigma(2), \sigma^2(2),
\sigma^3(2)\} =\{1,2,3,4\};$ and for $\sigma=(13)(24)$, the orbit of
$2$ is $\{\sigma^i(2):i\in\mathbb{N}\} =\{2,\sigma(2)\} =\{2,4\}$.
The \emph{symmetric group} $S_n$ is defined as the group of
permutations on $X_n$, with its group operation being the
composition of bijections.

\section{Basics of Representation Theory} \label{appd:representation}

Representation theory is a branch of algebra which studies structure
theory by representing elements in an algebraic object, such as a
group, a module, or an algebra, using linear transformations of vector
spaces. In this appendix, we will review some basic concepts and
results in the representation theory of Lie algebras that are used in
this paper. Detailed discussions of Lie representation theory can be
found in \cite{fulton1991,Knapp2002lie}.


To study the algebraic structure of a Lie algebra, let us introduce
some related definitions.
\begin{definition}
  \label{def:semisimple.appendix}
  \mbox{}
  \begin{itemize}
  \item A Lie algebra $\mathfrak{g}$ is said to be \emph{abelian} if
    \[
      [\mathfrak{g},\mathfrak{g}]:={\rm
        span}\,\{[X,Y]:X,Y\in\mathfrak{g}\}=0.
    \]
  \item A subspace $\mathfrak{h}$ of $\mathfrak{g}$ is a \emph{Lie
      subalgebra} of $\mathfrak{g}$ if
    $[\mathfrak{h},\mathfrak{h}]\subseteq\mathfrak{h}$. In other
    words, $\mathfrak{h}$ is a Lie algebra itself w.r.t.\
    $[\cdot,\cdot]$.
  \item A Lie subalgebra $\mathfrak{h} \leqslant{}\mathfrak{g}$ is an
    \emph{ideal} in $\mathfrak{g}$ if
    $[\mathfrak{h},\mathfrak{g}]\subseteq\mathfrak{h}$.
  \item The Lie algebra $\mathfrak{g}$ is said to be \emph{simple} if
    it is nonabelian and has no proper nonzero ideals, and
    \emph{semisimple} if it has no nonzero abelian ideals.
  \end{itemize}
\end{definition}

It can be shown that every semisimple Lie algebra $\mathfrak{g}$ can
be decomposed into a direct sum of simple Lie algebras which are
ideals in $\mathfrak{g}$. Moreover, this decomposition is unique, and
the only ideals of $\mathfrak{g}$ are the direct sums of some of these
simple Lie algebras. For example, each special orthogonal Lie algebra
$\mathfrak{so}(n)=\{\Omega\in\mathbb{R}^{n\times
  n}:\Omega+\Omega^{\tr}=0\}$, as we use extensively in this paper, is
simple except for $n=4$, while $\mathfrak{so}(4)$ is semisimple but
not simple: as shown in \cref{ex:spin.reps},
$\mathfrak{so}(4)=\mathfrak{so}(3)\oplus\mathfrak{so}(3)$.

The study of algebraic structures of semisimple Lie algebras plays a
central role in representation theory. One of the most dominant
results is the Cartan decomposition that traces back to the work of
\'{E}lie Cartan and Wilhelm Killing in the 1880s, which generalizes
the notion of singular value decomposition for matrices. Given a
semisimple Lie algebra $\mathfrak{g}$, its \emph{Cartan subalgebra}
$\mathfrak{h}$ is a maximal abelian subalgebra of $\mathfrak{g}$ such
that ${\rm ad}_H$ is diagonalizable for all $H\in\mathfrak{h}$, where
${\rm ad}_XY=[X,Y]$ for all $X,Y\in\mathfrak{g}$. Moreover, the
dimension of $\mathfrak{h}$ is called the \emph{rank} of
$\mathfrak{g}$. Let $\mathfrak{h}^{\ast}$ denote the dual space of
$\mathfrak{h}$, i.e., the space of linear functionals on
$\mathfrak{h}$, then a nonzero element $\alpha\in\mathfrak{h}$ is
called a \emph{root} of $\mathfrak{g}$ if there exists some
$X\in\mathfrak{g}$ such that ${\rm ad}_HX=\alpha(H)X$ for all
$H\in\mathfrak{h}^{\ast}$, and
$\mathfrak{g}_{\alpha}:=\{X\in\mathfrak{g}: {\rm ad}_HX=\alpha(H)X,
\forall H\in\mathfrak{h}\}$ is a vector space called the \emph{root
  space} of $\mathfrak{g}$, which can be shown to be
one-dimensional. Let $R$ denote the set of roots of $\mathfrak{g}$,
then $R$ is finite and spans $\mathfrak{h}^*$. With the above
notations, the \emph{root space decomposition}, which generalizes the
classical \emph{Cartan decomposition}, is defined as
\[
  \mathfrak{g}=\mathfrak{h}\oplus\Bigl(\bigoplus_{\alpha\in
    R}\mathfrak{g}_{\alpha}\Bigr).
\]
A major tool to study the properties of $R$ is the Weyl group, which
is defined as follows: Let $\alpha\in R$ be a root and
$s_{\alpha}:\mathfrak{h}^*\rightarrow\mathfrak{h}^*$ denote the
reflection about the hyperplane in $\mathfrak{h}^*$ orthogonal to
$\alpha$, i.e.,
$s_\alpha(\beta)
=\beta-\frac{2\langle\beta,\alpha\rangle}{\langle\alpha,\alpha\rangle}\alpha$
for all $\beta\in\mathfrak{h}^*$, where $\langle\cdot,\cdot\rangle$ is
an inner product on $\mathfrak{h}$, then the \emph{Weyl group} $W$ of
$R$ is the subgroup of the orthogonal group ${\rm O}(\mathfrak{h}^*)$
of $\mathfrak{h}^*$ generated by all $s_\alpha$ for $\alpha\in R$. It
can be shown that $W$ is a finite group and hence a subgroup of a
symmetric group by Cayley's theorem.

\bibliographystyle{siamplain}
\bibliography{SOn}
\end{document}